\documentclass[11pt]{amsart}

\usepackage[T1]{fontenc}
\usepackage{lmodern}
\usepackage{amsmath,amssymb,amsfonts,amsthm,mathtools}
\usepackage{booktabs,array}
\usepackage{enumitem}
\usepackage{microtype}
\usepackage[margin=1.05in]{geometry}
\usepackage[colorlinks=true,linkcolor=blue,citecolor=blue,urlcolor=blue]{hyperref}
\usepackage{bookmark}

\hypersetup{
  pdftitle={Positive-regularity norm inflation for the 3D hypodissipative Navier--Stokes equations},
  pdfauthor={Guirong Tang and Shiyang Xiong},
  pdfsubject={Norm inflation for hypodissipative Navier--Stokes equations},
  pdfkeywords={hypodissipative Navier--Stokes equations, norm inflation, strong ill-posedness, vortex-ring mixing}
}

\numberwithin{equation}{section}
\allowdisplaybreaks
\newtheorem{theorem}{Theorem}[section]
\newtheorem{proposition}[theorem]{Proposition}
\newtheorem{lemma}[theorem]{Lemma}
\newtheorem{corollary}[theorem]{Corollary}
\theoremstyle{definition}

\theoremstyle{remark}
\newtheorem{remark}[theorem]{Remark}

\newcommand{\R}{\mathbb R}

\newcommand{\veps}{\varepsilon}
\newcommand{\supp}{\operatorname{supp}}

\newcommand{\dd}{\,\mathrm d}
\newcommand{\divg}{\operatorname{div}}
\newcommand{\curl}{\operatorname{curl}}
\newcommand{\PP}{\mathbb P}
\newcommand{\ubar}{\bar u}
\newcommand{\Lambdaop}{\Lambda}
\newcommand{\norm}[2]{\left\|#2\right\|_{#1}}
\newcommand{\abs}[1]{\left|#1\right|}
\newcommand{\ip}[2]{\left\langle #1,#2\right\rangle}
\newcommand{\lesssimz}{\lesssim_{\zeta}}
\title[Norm inflation for hypodissipative Navier--Stokes]{Positive-regularity norm inflation for the 3D hypodissipative Navier--Stokes equations}

\author{Guirong Tang}
\address{Guirong Tang\\School of Mathematical Sciences, Capital Normal University, Beijing 100048, China}
\email{tangguirong@amss.ac.cn}

\author{Shiyang Xiong}
\address{Shiyang Xiong\\
Institute of Mathematics, Academy of Mathematics and Systems Science\\
Chinese Academy of Sciences, Beijing 100190, China}
\email{xiongshiyang@amss.ac.cn}

\date{}

\subjclass[2020]{35Q30, 35Q35, 76D05, 35B30}
\keywords{Hypodissipative Navier--Stokes equations, norm inflation, strong ill-posedness, vortex-ring mixing}

\begin{document}

\begin{abstract}
We prove same-space norm inflation for the three-dimensional hypodissipative Navier--Stokes equations with dissipation $(-\Delta)^\alpha$, $0<\alpha<1$.  Let $2\le p<\infty$ and
\[
  0<s<1-2\alpha+\frac3p.
\]
In the Besov case, let also $1\le q\le\infty$.  There exist divergence-free $C_c^\infty(\mathbb R^3)$ initial data that are arbitrarily small in $W^{s,p}$, respectively in $B^s_{p,q}$, while the corresponding unique local smooth solution becomes arbitrarily large in the same space in arbitrarily short time.  The proof adapts the anisotropic vortex-ring mixing mechanism to fractional dissipation; the strict scaling-supercritical gap makes both the curvature error and the nonlocal dissipative error perturbative.
\end{abstract}

\maketitle

\section{Introduction}\label{sec:intro}

We consider the three-dimensional hypodissipative Navier--Stokes equations
\begin{equation}\label{eq:fns}
\begin{cases}
  \partial_tu+u\cdot\nabla u+\nabla\pi+(-\Delta)^\alpha u=0,\\
  \divg u=0,\\
  u|_{t=0}=u_0,
\end{cases}
\qquad (t,x)\in[0,T]\times\R^3,
\end{equation}
where $0<\alpha<1$.  The scaling
\begin{equation}\label{eq:scaling}
  u(t,x)\longmapsto u_\lambda(t,x)
  :=\lambda^{2\alpha-1}u(\lambda^{2\alpha}t,\lambda x)
\end{equation}
gives
\begin{equation}\label{eq:scaling-norm}
  \norm{\dot W^{s,p}}{u_{0,\lambda}}
  =\lambda^{s+2\alpha-1-3/p}\norm{\dot W^{s,p}}{u_0}.
\end{equation}
Thus the scaling-critical index is
\begin{equation}\label{eq:critical-index}
  s_c(p,\alpha):=1-2\alpha+\frac3p.
\end{equation}
The range $0<s<s_c(p,\alpha)$ is positive-regularity and strictly scaling-supercritical.

For the classical equations, Luo \cite{Luo2024,Luo2025} showed that a thin vortex ring can generate norm inflation through physical-space mixing.  This raises a natural question for \eqref{eq:fns}: does the same mechanism survive the nonlocal dissipation $(-\Delta)^\alpha$, and does it reach the full positive strictly supercritical range?  The following theorem gives an affirmative answer.

\begin{theorem}[Norm inflation]\label{thm:main}
Let
\[
  0<\alpha<1,
  \qquad 2\le p<\infty,
  \qquad 0<s<1-2\alpha+\frac3p.
\]
Let $X=W^{s,p}$, or let $X=B^s_{p,q}$ for a fixed $1\le q\le\infty$.  For every $\veps>0$, there exist a divergence-free vector field $u_0\in C_c^\infty(\R^3)$ and a time $0<t_\veps\le\veps$ such that the unique smooth solution of \eqref{eq:fns} exists on $[0,t_\veps]$ and satisfies
\[
  \norm{X}{u_0}\le\veps,
  \qquad
  \norm{X}{u(t_\veps)}\ge\veps^{-1}.
\]
\end{theorem}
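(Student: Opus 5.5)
We adapt Luo's vortex-ring mixing construction \cite{Luo2024,Luo2025}. The first step is a two-dimensional mixing model. Take an axisymmetric-without-swirl velocity field whose vorticity $\omega^\theta$ is concentrated in a thin torus of major radius $\approx 1$ and minor radius $\delta\ll1$. In the variables $(r,z)$ near the core, the flow is approximately a 2D shear or vortex flow $\bar u$ with a large, nearly steady velocity gradient. A passive, high-frequency perturbation $\phi$ of the core (small amplitude, frequency $N$) is then filamented by this gradient. As a result its $\dot W^{s,p}$ and $\dot B^s_{p,q}$ norms grow like $(\text{strain}\cdot t)^s$ for $s>0$, while its amplitude in $L^p$ is essentially conserved.

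We choose the initial datum as $u_0=\bar u_0+\tilde u_0$. The background part $\bar u_0$ is a rescaled thin vortex ring, normalized via \eqref{eq:scaling} so that $\|\bar u_0\|_X\le\veps/2$. The perturbation $\tilde u_0$ is localized inside the core. Because $s<s_c$, the scaling exponent $s+2\alpha-1-3/p$ in \eqref{eq:scaling-norm} is negative. Hence concentrating at scale $\lambda^{-1}\to0$ makes the $X$-norm small while the strain $\|\nabla\bar u\|_{L^\infty}\sim\lambda^{2\alpha}\times(\text{large})$ and the inverse time scale blow up. This strict gap leaves a power $\lambda^{-\kappa}$, $\kappa>0$, of room to absorb errors.

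The key steps, in order, are as follows.

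\begin{enumerate}[label=(\roman*)]
\item Construct an explicit approximate solution $u_{\rm app}$. It is given by the exact Euler evolution of the axisymmetric ring, or a stationary 2D profile transplanted onto the torus, transported together with the mixed perturbation. For $t\le t_\veps$ (with $t_\veps\to0$), show the lower bound $\|u_{\rm app}(t_\veps)\|_X\ge2\veps^{-1}$. This uses the mixing lower bound for the transported scalar, where the Besov case follows by interpolation/duality with a test function at the filament frequency.
\item Estimate the residual $R=\partial_tu_{\rm app}+u_{\rm app}\cdot\nabla u_{\rm app}+\nabla\pi+(-\Delta)^\alpha u_{\rm app}$. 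It has two pieces. The first is the curvature error from replacing the 2D flow by a ring, of relative size $\delta$. The second is the dissipative error $(-\Delta)^\alpha u_{\rm app}$; we bound it in $H^m$ or $W^{k,p}$ by $\lambda^{2\alpha}N^{2\alpha}$ times the norm, using Fourier-multiplier bounds, without needing locality.
\item Run an energy/stability estimate for $w=u-u_{\rm app}$ in $H^{m}$ with $m>5/2$. The Grönwall factor is $\exp(\int_0^{t}\|\nabla u_{\rm app}\|_{L^\infty})$, which is $O(\log$-controlled$)$ by the choice of the mixing time. This gives $\|w(t_\veps)\|_{H^m}$ small. By Sobolev embedding into $X$ on the compact region, $\|w\|_X\ll\veps^{-1}$, and this transfers the inflation to $u$. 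Local smooth existence on $[0,t_\veps]$ follows from the same bound.
\end{enumerate}

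The main obstacle is the parameter bookkeeping in step (ii)–(iii). The dissipative error grows like $(\lambda N e^{Ct})^{2\alpha}$ along the filamentation, while the curvature error grows with the strain, and the exponential loss in the stability estimate must be beaten. We first try choosing $N=\lambda^{a}$, $\delta=\lambda^{-b}$, and $t_\veps$ with total strain $\sim\log\lambda$, and then verify that all error terms carry a negative power of $\lambda$ exactly when $s<1-2\alpha+3/p$. If the dissipative error is too large in $H^m$, we instead estimate $w$ in a lower norm $L^2\cap L^p$ and use interpolation against the a priori high-norm bound.
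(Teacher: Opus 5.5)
Your outline follows the paper's architecture: Luo's thin ring with a steady meridional profile, a residual split into a curvature part and the nonlocal part $(-\Delta)^\alpha u_{\mathrm{app}}$ bounded globally by multipliers, stability for $w=u-u_{\mathrm{app}}$, and transfer to $X$. However, step (iii) as written does not close.

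\textbf{Smallness is only relative, and the embeddings into $X$ are lossy.} A Gr\"onwall argument for $w$ only gives smallness \emph{relative} to the natural scale of the approximate solution. With the $X$-norm pinned at $\zeta^2$ on a torus of thickness $\mu^{-1}$ and radius $\nu^{-1}$, that scale blows up: $\|\nabla\ubar\|_{L^\infty}\sim L=A\mu\to\infty$ and $\|\ubar\|_{\dot H^m}\lesssim B_m=A\mu^{m-1}\nu^{-1/2}$, with $B_{5/2}\ge\zeta^2\mu^{1-s+3/p}$. So ``$\|w\|_{H^m}$ small'' can only mean $\|w\|_{\dot H^\sigma}\lesssim B_\sigma\vartheta^{\gamma}$ with $\vartheta=\nu/\mu$; the curvature residual has relative size $\vartheta$ per unit of $Lt$. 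In this anisotropic geometry the embeddings then cost powers of $\vartheta$: $B_{s+d_p}=S_{s,p}\vartheta^{-(1/2-1/p)}$ and $B_{5/2+\eta}=L\mu^{\eta}\vartheta^{-1/2}$. You therefore need $\gamma>1/2-1/p$ at order $s+d_p$, and $\gamma>1/2$ (plus the $\mu^\eta$ loss) at order $5/2+\eta$, i.e.\ almost the full curvature gain at high order. ``Sobolev embedding on the compact region'' does not address this. The paper gets $\gamma_*>3/4$ from the anisotropic induction $\gamma_k=1-k/(4(K_{\mathrm b}+1))$, fed by $L^\infty$ endpoint bounds from finite-$L^\ell$ propagation that lose only $\vartheta^{-\kappa}$. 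Your fallback would in fact suffice for large $M$ and is simpler: interpolate an $L^2$ bound with gain $\vartheta$ against an ungained a priori $H^M$ bound, giving $\vartheta^{1-\sigma/M}$. But you invoke it for the wrong reason (the dissipative error), and it relies on the next point.

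\textbf{The total strain must be bounded, and the gap is not produced by scaling.} Every Gr\"onwall factor must be independent of the large parameter, so $\int_0^{t_*}\|\nabla u\|_{L^\infty}$ must be a fixed constant. In the paper the swirl $\ubar_\theta=Ag(\mu\rho)\sin(\varphi-tA/\rho)$ is transported by the steady differential rotation $\frac{A}{\rho}\partial_\varphi$. Its frequency grows \emph{linearly}, $\mathcal K(t)=\mu+tA\mu^2$, and at $t_*=\zeta^{-N}L^{-1}$ it reaches $K_*=\zeta^{-N}\mu$, which already gives the factor $\zeta^{-Ns}$. Meanwhile $\exp(C\int_0^{t_*}\|\nabla u\|_{L^\infty})\le\exp(C_\zeta\zeta^{-N})$ is a constant that any power of $\mu$ beats. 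With your total strain $\sim\log\lambda$ and exponential frequency growth, these factors become powers $\lambda^{cC_m}$, with $C_m$ depending on the Sobolev order. They would have to be beaten by a gain whose exponent is limited by $s_c-s$; you have not verified this, and it is not needed.

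Relatedly, the room does not come from the exact scaling $u\mapsto\lambda^{2\alpha-1}u(\lambda^{2\alpha}t,\lambda x)$. That is a symmetry of the equation and leaves every relative error unchanged. The room comes from pinning $S_{s,p}=\zeta^2$ and letting $\mu\to\infty$ with $\nu=\mu^{1-b}$, $b=(s_c-s)/100$. This gives $t_*\mu^{2\alpha}=\zeta^{-N-2}\mu^{-(s_c-s)+b/p}\to0$ and $\mu^{2\alpha}/(A\nu)=\zeta^{-2}\mu^{-(s_c-s)+b(1+1/p)}\to0$.

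\textbf{The passive quantity must be the swirl.} State explicitly that the transported quantity is the swirl velocity $u_\theta$. A filamented meridional vorticity perturbation at frequency $K$ has velocity $\dot W^{s,p}$-norm of order $K^{s-1}\|\tilde\omega\|_{L^p}$, which decreases for the small $s$ covered by the theorem.
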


\begin{remark}[Interpretation and scope]\label{rem:scope}
Throughout the paper, $W^{s,p}$ denotes the Bessel-potential Sobolev space defined by $J^su\in L^p$, where $J=(1-\Delta)^{1/2}$.  Theorem \ref{thm:main} is a same-space statement: the initial and inflated norms have the same regularity and, in the Besov case, the same fixed summability index $q$.  The data are smooth and compactly supported, and the inflation occurs along the unique local smooth solution; in particular, the smooth solution map cannot be locally bounded at the origin in any of the stated spaces.

The upper boundary $s=s_c(p,\alpha)$ is the scaling line.  The proof uses the strict gap
\[
  \delta:=1-2\alpha+\frac3p-s>0
\]
twice: to separate the inflation time from the dissipative time scale and to make the full-space fractional residual perturbative.  At $\delta=0$ the present parameter balance loses both gains.  This identifies a limitation of the construction, not a conclusion about endpoint well-posedness or ill-posedness.  The same geometric construction formally includes $\alpha=1$, where the corresponding positive-regularity Sobolev and Besov results are already contained in \cite{Luo2024,Luo2025}.
\end{remark}

\subsection{Background and related work}
Norm inflation for the classical Navier--Stokes equations was first developed in negative-regularity spaces through Fourier-space high-to-low cascades; see, among others, \cite{BourgainPavlovic2008,Yoneda2010,Wang2015}.  Cheskidov and Dai \cite{CheskidovDai2014} extended this type of construction to generalized Navier--Stokes equations with dissipation powers $\alpha\ge1$.  These arguments transfer energy between separated Fourier scales and are distinct from the physical-space mechanism used below.

The positive-regularity theory has a different geometry.  Luo \cite{Luo2024} introduced an anisotropic vortex-ring construction for the Euler and classical Navier--Stokes equations: a nearly two-dimensional meridional flow transports a swirl and creates rapid oscillations across a thin torus.  The later work \cite{Luo2025} developed the corresponding Besov picture and separated the positive-regularity mixing regime from the negative-regularity un-mixing regime.  The present paper follows the positive branch of that construction.  The new issue is not the production of oscillations itself, but the stability of the mechanism under the nonlocal operator $(-\Delta)^\alpha$ throughout the range $0<\alpha<1$.

For fractional Navier--Stokes equations, critical-space well-posedness and weak- or mild-solution nonuniqueness have been studied from several perspectives; representative references include \cite{MiaoYuanZhang2008,Zhai2010,YuZhai2012,ColomboDeLellisDeRosa2018,DeRosa2019,AlbrittonColombo2023}.  Those results concern different regularity regimes or different solution classes.  Here the data are $C_c^\infty$, the solution is the unique smooth one, and the growth is measured in the same positive supercritical space as the initial datum.

The vortex-ring geometry and the transport mechanism are adapted from \cite{Luo2024,Luo2025}.  The principal new point is the treatment of the fractional dissipative error on the whole space.  We show that the same strict supercritical gap simultaneously makes the cumulative dissipation negligible on the inflation time and places $(-\Delta)^\alpha\ubar$ below the curvature error.  This is the mechanism that extends positive-regularity norm inflation from the classical equation to the full hypodissipative range.

\subsection{Outline of the construction}
The proof has three steps.

\smallskip
\noindent\textit{Step 1: fractional Navier--Stokes versus Euler dynamics.}
We construct a compactly supported approximate solution $\ubar$ on a thin torus of transverse thickness $\mu^{-1}$ and radius $\nu^{-1}$, with $\mu\gg\nu$.  The geometric error gains the aspect ratio $\nu/\mu$.  The nonlocal term $(-\Delta)^\alpha\ubar$ is estimated globally by Fourier multipliers.  With a suitable amplitude and inflation time, the strict gap $\delta>0$ yields
\[
  t_*\mu^{2\alpha}\longrightarrow0,
  \qquad
  \frac{\mu^{2\alpha}}{A\nu}\longrightarrow0
  \qquad (\mu\to\infty),
\]
after the structural parameter is fixed.

\smallskip
\noindent\textit{Step 2: three-dimensional Euler versus steady two-dimensional transport.}
In toroidal coordinates, the meridional velocity is generated by a two-dimensional steady Euler flow, while the swirl solves a linear transport equation.  Its phase develops a frequency $K_*$ that is a prescribed large multiple of the original transverse frequency.  A weighted non-cancellation argument gives integer derivative growth.  A one-scale finite-difference estimate yields the Besov lower bound, and homogeneous interpolation yields the Sobolev lower bound.

\smallskip
\noindent\textit{Step 3: stability of the approximation.}
Let $w=u-\ubar$.  Under a Lipschitz bootstrap, finite-exponent Sobolev norms of the exact solution propagate with a coefficient depending only on $\|\nabla u\|_{L^\infty}$.  These estimates provide the endpoint bounds needed in an anisotropic $H^k$ induction for $w$.  Interpolation then shows that the target-space error is negligible compared with the growth of $\ubar$.

\subsection{Organization of the paper}
Section \ref{sec:prelim} records the function-space estimates and the smooth local theory.  Section \ref{sec:approx} constructs the approximate solution and proves its residual bounds.  Section \ref{sec:growth} establishes norm growth of the transported swirl.  Section \ref{sec:stability} proves the projected commutator estimate and the stability of the approximation up to the inflation time.  Section \ref{sec:main-proof} completes the parameter choice and proves Theorem \ref{thm:main}.

\section{Preliminaries}\label{sec:prelim}

\subsection{Function spaces}
Set $\Lambdaop=(-\Delta)^{1/2}$ and $J=(1-\Delta)^{1/2}$.  We use the Bessel-potential Sobolev spaces $W^{\sigma,p}$, the Besov spaces $B^\sigma_{p,q}$, and their homogeneous counterparts $\dot W^{\sigma,p}$ and $\dot B^\sigma_{p,q}$.  All spaces are over $\R^3$ unless otherwise stated.  Standard facts used below may be found in \cite{BahouriCheminDanchin2011,BerghLofstrom1976,Triebel1983}.

For $1<p<\infty$ and $\sigma\ge0$,
\[
  \norm{W^{\sigma,p}}{f}\simeq\norm{L^p}{J^\sigma f},
  \qquad
  \norm{\dot W^{\sigma,p}}{f}\simeq\norm{L^p}{\Lambdaop^\sigma f}.
\]
For $s>0$ and an integer $m>s$, a directional finite-difference characterization gives
\begin{equation}\label{eq:directional-besov}
  \norm{\dot B^s_{p,q}}{f}
  \gtrsim
  \left(\int_0^\infty
  \bigl[h^{-s}\norm{L^p}{\Delta_{he_3}^mf}\bigr]^q\frac{\dd h}{h}\right)^{1/q},
\end{equation}
with the usual supremum when $q=\infty$.  Here $\Delta_hf(x)=f(x+h)-f(x)$.  We also use the homogeneous Sobolev interpolation inequality
\begin{equation}\label{eq:sobolev-interpolation-prelim}
  \norm{\dot W^{\sigma_1,p}}{f}
  \lesssim
  \norm{\dot W^{\sigma_0,p}}{f}^{1-\theta}
  \norm{\dot W^{\sigma_2,p}}{f}^{\theta},
  \qquad
  \sigma_1=(1-\theta)\sigma_0+\theta\sigma_2.
\end{equation}
For $2\le p<\infty$, let
\begin{equation}\label{eq:dp}
  d_p:=3\left(\frac12-\frac1p\right).
\end{equation}
Then
\begin{equation}\label{eq:sobolev-transfer-prelim}
  \norm{\dot W^{s,p}}{f}
  \lesssim\norm{\dot H^{s+d_p}}{f}.
\end{equation}
Moreover, for every $\eta>0$ and $1\le q\le\infty$,
\begin{equation}\label{eq:besov-transfer-prelim}
  \norm{B^s_{p,q}}{f}
  \lesssim_{s,p,q,\eta}\norm{H^{s+d_p+\eta}}{f}.
\end{equation}
The arbitrarily small loss $\eta$ in \eqref{eq:besov-transfer-prelim} permits all values of $q$.

\subsection{Smooth local theory}

We use the following standard local theory and continuation criterion.

\begin{proposition}[Smooth local theory]\label{prop:local-theory}
Let $0<\alpha\le1$ and let $m>5/2$ be an integer.  For every divergence-free $u_0\in H^m(\R^3)$, there exists $T>0$ and a unique solution
\[
  u\in C([0,T];H^m)\cap L^2(0,T;H^{m+\alpha})
\]
of \eqref{eq:fns}.  If $u_0\in C_c^\infty$, the solution remains smooth as long as
\begin{equation}\label{eq:continuation-criterion}
  \int_0^T\norm{L^\infty}{\nabla u(t)}\dd t<\infty.
\end{equation}
\end{proposition}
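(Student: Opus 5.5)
The plan is the standard energy method in $H^m$, $m>5/2$, combined with a Beale--Kato--Majda-type logarithmic argument for the continuation criterion. For existence I would build approximate solutions by Friedrichs mollification: let $J_\veps$ denote convolution with a standard mollifier. I would solve
\[
  \partial_t u^\veps+\PP J_\veps\bigl(J_\veps u^\veps\cdot\nabla J_\veps u^\veps\bigr)+(-\Delta)^\alpha J_\veps^2u^\veps=0,
  \qquad u^\veps|_{t=0}=u_0,
\]
as an ODE on divergence-free $H^m$ fields. The key ingredient is the commutator estimate
\[
  \sum_{|\beta|\le m}\norm{L^2}{\partial^\beta(v\cdot\nabla w)-v\cdot\nabla\partial^\beta w}
  \lesssim\norm{L^\infty}{\nabla v}\norm{H^m}{w}+\norm{H^m}{v}\norm{L^\infty}{\nabla w},
\]
together with cancellation of the top-order term by $\divg v=0$. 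These give
\[
  \frac{\dd}{\dd t}\norm{H^m}{u^\veps}^2+2\norm{\dot H^{\alpha}}{J_\veps\Lambda^m u^\veps}^2
  \le C\norm{L^\infty}{\nabla u^\veps}\norm{H^m}{u^\veps}^2.
\]
The dissipative term is nonnegative because the multiplier $|\xi|^{2\alpha}$ is nonnegative, so nothing specific to $\alpha<1$ enters. Since $m>5/2$, Sobolev embedding gives $\norm{L^\infty}{\nabla u}\lesssim\norm{H^m}{u}$, hence a uniform time $T\sim(C\norm{H^m}{u_0})^{-1}$ and uniform bounds in $L^\infty H^m$. Integrating the dissipation term adds the $L^2H^{m+\alpha}$ bound.

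I would pass to the limit $\veps\to0$ by showing the family is Cauchy in $C([0,T];L^2)$; this is a lower-order energy estimate for differences using $\norm{L^\infty}{\nabla u}$. Interpolation then gives convergence in $C([0,T];H^{m'})$ for all $m'<m$, and weak-$*$ compactness gives a limit in $L^\infty H^m\cap L^2H^{m+\alpha}$. Strong continuity in time with values in $H^m$ follows by the Bona--Smith argument, or equivalently by weak continuity plus continuity of the norm obtained from the energy equality. Uniqueness follows from the same $L^2$ difference estimate and Gronwall, using $\nabla u\in L^1L^\infty$; the dissipation of the difference again has a favorable sign.

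For the continuation criterion I would use the differential inequality above for the true solution. It gives
\[
  \norm{H^m}{u(t)}\le\norm{H^m}{u_0}\exp\Bigl(C\int_0^t\norm{L^\infty}{\nabla u}\Bigr),
\]
so \eqref{eq:continuation-criterion} keeps the $H^m$ norm bounded, and the local existence time, which depends only on the $H^m$ size, lets us extend past $T$. Smoothness for $C_c^\infty$ data follows by running this for every $m$: the growth of each $H^m$ norm is controlled by the same quantity $\int\norm{L^\infty}{\nabla u}$. Parabolic smoothing in time comes from differentiating the equation, each $\partial_t$ costing at most $\max(1,2\alpha)$ derivatives.

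The main obstacle is that \eqref{eq:continuation-criterion} controls only $\nabla u$ and not $\nabla u$ in a stronger norm; the estimate displayed above handles exactly this, so it is the step requiring care. Note also that the data are only in $H^m$, not compactly supported in time evolution; this is harmless because we work in $H^m(\R^3)$ throughout.
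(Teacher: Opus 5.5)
Your proposal is correct and follows essentially the same route as the paper. Both use Friedrichs regularization, the Moser/commutator $H^m$ energy inequality that is linear in $\|\nabla u\|_{L^\infty}$ (with the nonnegative fractional dissipation discarded), an $\varepsilon$-independent existence time from $m>5/2$, an $L^2$ difference estimate for convergence and uniqueness, and Gronwall on the same inequality for the continuation criterion. You also supply the strong time-continuity in $H^m$ and the persistence of higher regularity that the paper only calls standard. The one superfluous item is the Beale--Kato--Majda logarithmic argument you announce at the start: it is never needed, because the criterion already controls $\|\nabla u\|_{L^\infty}$ directly, and your own argument indeed does not use it.
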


\begin{proof}
Apply a smooth Friedrichs cutoff $J_M$ to the projected equation and set $v_M=J_Mu_M$.  Choosing $J_M$ self-adjoint, the nonlinear energy term reduces to
\[
  \ip{D^\beta J_M\PP(v_M\cdot\nabla v_M)}{D^\beta u_M}
  =\ip{D^\beta(v_M\cdot\nabla v_M)}{D^\beta v_M}.
\]
The divergence-free cancellation and the standard Moser estimate give, uniformly in $M$,
\[
  \frac12\frac{\dd}{\dd t}\norm{H^m}{u_M}^2
  +\norm{H^m}{\Lambdaop^\alpha u_M}^2
  \le C_m\norm{L^\infty}{\nabla u_M}\norm{H^m}{u_M}^2.
\]
Since $m>5/2$, this yields an $M$-independent local interval.  Compactness gives existence, and the $L^2$ difference estimate gives uniqueness.  The same inequality, followed by the usual higher-order bootstrap, proves the continuation criterion \eqref{eq:continuation-criterion}.  This is the standard Friedrichs construction; see also \cite{BahouriCheminDanchin2011}.
\end{proof}

\section{The approximate solution}\label{sec:approx}

\subsection{Parameters}
Fix $0<\alpha<1$, $2\le p<\infty$, and
\[
  0<s<1-2\alpha+\frac3p.
\]
Set
\begin{equation}\label{eq:delta-b}
  \delta:=1-2\alpha+\frac3p-s>0,
  \qquad
  b:=\frac{\delta}{100},
\end{equation}
and choose an integer $N$ such that
\begin{equation}\label{eq:N-choice}
  N\ge100+\frac{10}{s}.
\end{equation}
Throughout Sections \ref{sec:approx}--\ref{sec:stability}, the structural parameter $0<\zeta\le1$ is fixed first and the large parameter $\mu$ is then taken sufficiently large depending on $\zeta$.  Constants denoted by $C_\zeta$ may depend on the fixed exponents, profiles, and $\zeta$, but not on $\mu$ or $\nu$.

Set
\begin{equation}\label{eq:nu-theta}
  \nu:=\mu^{1-b},
  \qquad
  \vartheta:=\frac\nu\mu=\mu^{-b},
\end{equation}
and define
\begin{equation}\label{eq:amplitude-scales}
  A:=\zeta^2\mu^{-s+2/p}\nu^{1/p},
  \qquad
  V_j:=\mu^{-2/j}\nu^{-1/j},
  \qquad
  S_{\sigma,j}:=A\mu^\sigma V_j,
  \qquad
  L:=A\mu.
\end{equation}
For $j=\infty$, we use $V_\infty=1$.  The target scale is
\begin{equation}\label{eq:target-scale}
  S_{s,p}=A\mu^sV_p=\zeta^2.
\end{equation}
Finally, let
\begin{equation}\label{eq:tstar-Kstar}
  t_*:=\zeta^{-N}L^{-1},
  \qquad
  K_*:=t_*A\mu^2=\zeta^{-N}\mu.
\end{equation}
Thus $K_*/\mu=\zeta^{-N}$ is a prescribed large frequency separation once $\zeta$ is chosen.

\subsection{Toroidal coordinates and profile estimates}
Use cylindrical coordinates $(r,\theta,z)$ in $\R^3$.  For positive parameters $\mu\gg\nu$, define
\begin{equation}\label{eq:toroidal-coordinates}
  \rho:=\sqrt{(r-\nu^{-1})^2+z^2},
  \qquad
  r-\nu^{-1}=\rho\cos\varphi,
  \qquad
  z=\rho\sin\varphi.
\end{equation}
On the region $\rho\simeq\mu^{-1}$ one has
\begin{equation}\label{eq:r-comparison}
  r\simeq\nu^{-1},
  \qquad
  \abs{\nabla^jr^{-1}}\lesssim_j\nu^{j+1},
  \qquad
  \abs{\nabla^j\rho}\lesssim_j\mu^{j-1},
  \qquad
  \abs{\nabla^j\varphi}\lesssim_j\mu^j.
\end{equation}
The volume of a fixed-width toroidal profile supported there is comparable to $\mu^{-2}\nu^{-1}$, so the $L^\ell$ volume factor is precisely $V_\ell$ from \eqref{eq:amplitude-scales}.

\begin{lemma}[Toroidal profile estimates]\label{lem:toroidal-calculus}
Let $H\in C_c^\infty((0,\infty)\times\mathbb S^1)$ be supported where its first variable belongs to a fixed compact subset of $(0,\infty)$.  Let $e$ be any one of the cylindrical unit vectors $e_r,e_\theta,e_z$, and let
\[
  F(x)=aH(\mu\rho,\varphi)e.
\]
If $\mu/\nu$ is sufficiently large, then, for every integer $k\ge0$ and $1\le\ell\le\infty$,
\begin{equation}\label{eq:toroidal-integer}
  \norm{L^\ell}{\nabla^kF}
  \le C_{k,\ell,H}\abs{a}\mu^kV_\ell.
\end{equation}
For every real $\sigma\ge0$ and $1<\ell<\infty$,
\begin{equation}\label{eq:toroidal-fractional}
  \norm{L^\ell}{\Lambdaop^\sigma F}
  \le C_{\sigma,\ell,H}\abs{a}\mu^\sigma V_\ell.
\end{equation}
The same bounds hold for finite sums of profiles containing factors $(\mu r)^{-j}$, with the additional gain $(\nu/\mu)^j$.
\end{lemma}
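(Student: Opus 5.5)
The plan is to prove the integer bound \eqref{eq:toroidal-integer} by a direct chain-rule computation in toroidal coordinates, and then obtain \eqref{eq:toroidal-fractional} by interpolation.

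\emph{Integer bound.} Write $F=aH(\mu\rho,\varphi)e$. The unit vector $e_\theta$ (and $e_r$) depends only on the angle $\theta$, and its derivatives carry factors $r^{-1}$. On the support, $r\simeq\nu^{-1}$ by \eqref{eq:r-comparison}, so each such derivative costs at most $\nu\le\mu$. The vector $e_z$ is constant.

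For the scalar factor, apply the Fa\`a di Bruno formula to $H(\mu\rho,\varphi)$. Each derivative lands either on $H$ or on one of the inner functions $\mu\rho$, $\varphi$. By \eqref{eq:r-comparison}, $|\nabla^j(\mu\rho)|\lesssim\mu^j$ and $|\nabla^j\varphi|\lesssim\mu^j$. Hence every term in $\nabla^kF$ is bounded pointwise by $C_{k,H}|a|\mu^k$. Terms where some derivatives fall on $e$ are bounded by $|a|\mu^{k-i}\nu^i\le|a|\mu^k$.

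The support of $F$ is contained in a torus region $\{\rho\simeq\mu^{-1}\}$. Its volume is $\simeq 2\pi\nu^{-1}\cdot\mu^{-2}$, which is valid once $\mu/\nu$ is large, so that the cross-section sits well inside $r>0$. Integrating the pointwise bound over this support gives
\[
\|\nabla^kF\|_{L^\ell}\lesssim|a|\mu^k(\mu^{-2}\nu^{-1})^{1/\ell}=|a|\mu^kV_\ell .
\]
This proves \eqref{eq:toroidal-integer}.

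Factors $(\mu r)^{-j}$ are handled the same way. We have $(\mu r)^{-j}\simeq(\nu/\mu)^j$ on the support, and $\nabla^i(\mu r)^{-j}$ is bounded by $\mu^{-j}\nu^{j+i}\le(\nu/\mu)^j\mu^i$. The extra gain $(\nu/\mu)^j$ therefore simply factors out of every term.

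\emph{Fractional bound.} For $1<\ell<\infty$ and integer $k$, $\|\Lambda^kF\|_{L^\ell}\lesssim\|\nabla^kF\|_{L^\ell}$ by Riesz transform boundedness. For $\sigma$ between integers $k_0=\lfloor\sigma\rfloor$ and $k_0+1$, apply \eqref{eq:sobolev-interpolation-prelim} with $\sigma_0=k_0$ and $\sigma_2=k_0+1$. Writing $\theta=\sigma-k_0$, this gives
\[
\|\Lambda^\sigma F\|_{L^\ell}\lesssim(|a|\mu^{k_0}V_\ell)^{1-\theta}(|a|\mu^{k_0+1}V_\ell)^{\theta}=|a|\mu^\sigma V_\ell .
\]
The sums with $(\mu r)^{-j}$ factors interpolate in the same way, keeping the gain.

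\emph{Main obstacle.} The only delicate point is to confirm that the interpolation inequality \eqref{eq:sobolev-interpolation-prelim} applies to $F$ with constants independent of $\mu,\nu$. This holds because the inequality is scale-free and homogeneous, and $F\in C_c^\infty$. The endpoint $\sigma_0=0$ uses the $L^\ell$ bound itself.

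A second point is the uniformity of the coordinate estimates \eqref{eq:r-comparison} when $\mu/\nu$ is large. There I would verify $\nabla\varphi$ and $\nabla\rho$ directly: they are derivatives of functions of $(r-\nu^{-1},z)$ only, which is a planar computation with bounds $\rho^{1-j}$. The $r$-dependence enters only through $\nabla r$ and $e_r$, which are smooth with derivatives $O(r^{-1})=O(\nu)$.
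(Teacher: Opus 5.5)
Your proposal is correct and follows essentially the same route as the paper: a chain-rule pointwise bound $|\nabla^kF|\lesssim|a|\mu^k$ (with cylindrical-basis derivatives costing $r^{-1}\simeq\nu\le\mu$), integrated over a torus of volume $\simeq\mu^{-2}\nu^{-1}$, and then Riesz-transform comparison plus homogeneous interpolation between neighboring integer orders for the fractional bound. Your explicit check of the $(\mu r)^{-j}$ factors and of the uniformity of \eqref{eq:r-comparison} is somewhat more detailed than the paper's sketch, but the argument is the same.
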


\begin{proof}
In the support of $H(\mu\rho,\varphi)$, \eqref{eq:r-comparison} holds and the cylindrical basis derivatives cost powers of $r^{-1}\simeq\nu$, which are no larger than the transverse scale $\mu$.  Repeated use of the chain rule therefore gives the pointwise bound $\abs{\nabla^kF}\lesssim\abs a\mu^k$.  Integration over a set of volume comparable to $\mu^{-2}\nu^{-1}$ yields \eqref{eq:toroidal-integer}.  The fractional estimate follows by interpolation between two neighboring integer orders and the boundedness of Riesz transforms on $L^\ell$; see, for example, \cite{Stein1970}.  Every factor $(\mu r)^{-1}$ is $O(\nu/\mu)$, proving the final assertion.
\end{proof}

\begin{lemma}[Derivatives of the transported phase]\label{lem:phase-upper-prelim}
Let
\begin{equation}\label{eq:phase-def-prelim}
  \Phi(t,\rho,\varphi):=\varphi-\frac{tA}{\rho},
  \qquad
  \mathcal K(t):=\mu+tA\mu^2.
\end{equation}
On $\rho\simeq\mu^{-1}$, for every integer $j\ge1$,
\begin{equation}\label{eq:phase-upper-prelim-est}
  \abs{\nabla^j\Phi(t)}
  \lesssim_j \mathcal K(t)\mu^{j-1}.
\end{equation}
Consequently, if $0\le t\le t_*$ and $t_*A\mu^2=\zeta^{-N}\mu$, then $\mathcal K(t)\le(1+\zeta^{-N})\mu$ and all profile estimates remain valid with constants depending on the fixed $\zeta$ but not on $\mu$ or $\nu$.
\end{lemma}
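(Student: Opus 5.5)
We will prove \eqref{eq:phase-upper-prelim-est} by writing $\Phi$ as a sum of two functions of $x$ and estimating each with the chain rule together with the toroidal-coordinate bounds \eqref{eq:r-comparison}. We have $\Phi=\varphi-tA\,\rho^{-1}$, where $\rho$ and $\varphi$ are smooth functions of $(r,z)$ on the region $\rho\simeq\mu^{-1}$.

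The first term is handled directly: by \eqref{eq:r-comparison}, $\abs{\nabla^j\varphi}\lesssim_j\mu^j=\mu\cdot\mu^{j-1}\le\mathcal K(t)\mu^{j-1}$.

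For the second term we use the Faà di Bruno formula for $g(\rho)=\rho^{-1}$. This expresses $\nabla^j(\rho^{-1})$ as a finite sum of terms
\[
  g^{(m)}(\rho)\prod_{i=1}^m\nabla^{k_i}\rho,
  \qquad 1\le m\le j,\quad k_1+\dots+k_m=j,\quad k_i\ge1.
\]
On $\rho\simeq\mu^{-1}$ we have $\abs{g^{(m)}(\rho)}\lesssim_m\rho^{-m-1}\simeq\mu^{m+1}$. By \eqref{eq:r-comparison}, $\abs{\nabla^{k_i}\rho}\lesssim\mu^{k_i-1}$, so each product contributes at most $\mu^{j-m}$. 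Every term is therefore $O(\mu^{m+1}\mu^{j-m})=O(\mu^{j+1})$, which gives
\[
  \abs{tA\,\nabla^j\rho^{-1}}\lesssim_j tA\mu^{2}\mu^{j-1}\le\mathcal K(t)\mu^{j-1}.
\]
Adding the two contributions yields \eqref{eq:phase-upper-prelim-est}. The only point needing care is that $\rho$ is smooth away from the core circle $\rho=0$, which is excluded on this region. The bounds on $\nabla^j\rho$ themselves hold with $\mu/\nu$ large because $\rho$ is a distance in the meridional $(r,z)$ half-plane, which is Euclidean. Derivatives there bring down factors $\rho^{-1}\simeq\mu$, and the cylindrical-coordinate terms cost only powers of $\nu\le\mu$.

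For the consequence, let $0\le t\le t_*$. Then $\mathcal K(t)\le\mu+t_*A\mu^2=(1+\zeta^{-N})\mu$, so $\abs{\nabla^j\Phi}\lesssim_j(1+\zeta^{-N})\mu^j$. Any profile of the form $H(\mu\rho,\Phi)$ with $H$ smooth and $2\pi$-periodic in the second variable can now be differentiated by the chain rule. Each derivative costs at most $C_\zeta\mu$, through either $\mu\rho$ or $\Phi$. The argument of Lemma \ref{lem:toroidal-calculus} then goes through unchanged, with constants depending on $\zeta$ (through $\zeta^{-N}$) but not on $\mu$ or $\nu$. The fractional bounds again follow by interpolation.

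We do not expect a genuine obstacle. The main bookkeeping point is checking that the $\rho^{-1}$ singularity factors combine to exactly $\mu^{j+1}$.
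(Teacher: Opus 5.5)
Your proof is correct and follows the same route as the paper, whose proof simply cites \eqref{eq:r-comparison} and repeated differentiation of $\rho^{-1}$. Your Fa\`a di Bruno count $\mu^{m+1}\cdot\mu^{j-m}=\mu^{j+1}$ is the bookkeeping the paper leaves implicit, and the bound $\mathcal K(t)\le(1+\zeta^{-N})\mu$ follows directly from the definition of $t_*$, as you say.
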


\begin{proof}
The estimate follows from \eqref{eq:r-comparison} and repeated differentiation of $\rho^{-1}$.  The last statement is immediate from the definition of $t_*$.
\end{proof}

\subsection{The vortex-ring ansatz}
Choose $f,g\in C_c^\infty(\R)$ such that
\begin{equation}\label{eq:profile-choice}
\begin{gathered}
  f'=1\quad\text{on a neighborhood of }[1,3/2],
  \qquad \supp f\subset(1/2,2),\\
  0\le g\in C_c^\infty((1,3/2)),
  \qquad g\not\equiv0.
\end{gathered}
\end{equation}
Fix an interval
\begin{equation}\label{eq:Ig}
  I_g\Subset(1,3/2)
  \quad\text{such that}\quad
  g\ge c_g>0\quad\text{on }I_g.
\end{equation}
In the toroidal coordinates \eqref{eq:toroidal-coordinates}, define
\begin{equation}\label{eq:initial-components}
\begin{aligned}
  u_{0,\theta}&:=Ag(\mu\rho)\sin\varphi,\\
  u_{0,r}&:=-Af'(\mu\rho)\partial_z\rho,\\
  u_{0,z}&:=Af'(\mu\rho)\partial_r\rho,
\end{aligned}
\qquad
  u_c:=A\mu^{-1}\frac{f(\mu\rho)}r,
\end{equation}
and set
\begin{equation}\label{eq:u0}
  u_0:=u_{0,\theta}e_\theta+u_{0,r}e_r+(u_{0,z}+u_c)e_z.
\end{equation}
For $\mu/\nu$ large, the support is contained in a torus separated from the symmetry axis.  Hence $u_0\in C_c^\infty(\R^3)$.

\subsection{The transported swirl and the approximate equation}
Define
\begin{equation}\label{eq:approx-components}
  \ubar_\theta(t):=Ag(\mu\rho)\sin\Phi(t,\rho,\varphi),
  \qquad
  \Phi(t,\rho,\varphi):=\varphi-\frac{tA}{\rho},
\end{equation}
and
\begin{equation}\label{eq:approx-field}
  \ubar(t):=u_{0,r}e_r+\ubar_\theta(t)e_\theta+(u_{0,z}+u_c)e_z.
\end{equation}
Then $\ubar(0)=u_0$ and $\divg\ubar(t)=0$ for all $t$.

On the support of $g(\mu\rho)$, one has $f'(\mu\rho)=1$.  Since
\begin{equation}\label{eq:meridional-transport}
  u_{0,r}\partial_r+u_{0,z}\partial_z
  =\frac{A}{\rho}\partial_\varphi,
\end{equation}
the swirl satisfies
\begin{equation}\label{eq:swirl-transport}
  \partial_t\ubar_\theta
  +(u_{0,r}\partial_r+u_{0,z}\partial_z)\ubar_\theta=0.
\end{equation}

We next isolate the pressure generated by the meridional field.  Let
\[
  v:=(u_{0,r},u_{0,z})=\nabla_{r,z}^\perp\psi,
  \qquad
  \psi=A\mu^{-1}f(\mu\rho).
\]
Its scalar vorticity $\omega=\partial_ru_{0,z}-\partial_zu_{0,r}$ is a function of $\rho$ alone.  Hence $v\cdot\nabla_{r,z}\omega=0$ and
\begin{equation}\label{eq:curl-meridional-convection}
  \curl_{r,z}\bigl((v\cdot\nabla_{r,z})v\bigr)
  =v\cdot\nabla_{r,z}\omega=0.
\end{equation}
Because $\R^2$ is simply connected, there exists a smooth scalar $\pi_{\mathrm E}(r,z)$ such that
\begin{equation}\label{eq:meridional-euler}
  (v\cdot\nabla_{r,z})v+\nabla_{r,z}\pi_{\mathrm E}=0.
\end{equation}
The convective field $(v\cdot\nabla_{r,z})v=-\nabla_{r,z}\pi_{\mathrm E}$ in \eqref{eq:meridional-euler} is compactly supported, so $\nabla\pi_{\mathrm E}$ is compactly supported and $\pi_{\mathrm E}$ is constant on each component of its complement.  The torus is separated from $r=0$, and $\pi_{\mathrm E}$ is constant near the axis.  Its axisymmetric lift is therefore smooth on $\R^3$.

Using the cylindrical form of the nonlinear term and \eqref{eq:swirl-transport}--\eqref{eq:meridional-euler}, one obtains
\begin{equation}\label{eq:approx-equation}
  \partial_t\ubar+\ubar\cdot\nabla\ubar+\nabla\pi_{\mathrm E}
  +(-\Delta)^\alpha\ubar=E,
  \qquad
  \divg\ubar=0,
\end{equation}
where
\begin{equation}\label{eq:E-decomposition}
  E=E_{\mathrm{geom}}+E_{\mathrm{diss}},
  \qquad
  E_{\mathrm{diss}}:=(-\Delta)^\alpha\ubar,
\end{equation}
and the cylindrical components of the geometric residual are
\begin{equation}\label{eq:geometric-residual-components}
\begin{aligned}
  (E_{\mathrm{geom}})_\theta
  &:=u_c\partial_z\ubar_\theta+\frac{u_{0,r}}r\ubar_\theta,\\
  (E_{\mathrm{geom}})_r
  &:=u_c\partial_zu_{0,r}-\frac{\ubar_\theta^2}{r},\\
  (E_{\mathrm{geom}})_z
  &:=u_{0,r}\partial_ru_c+u_{0,z}\partial_zu_c
    +u_c\partial_zu_{0,z}+u_c\partial_zu_c.
\end{aligned}
\end{equation}
Thus the centrifugal term, the cylindrical swirl term, and all cross terms involving $u_c$ are explicitly retained.

\begin{proposition}[Approximate solution estimates]\label{prop:approximation-package}
Fix a target space
\[
  X=W^{s,p}
  \qquad\text{or}\qquad
  X=B^s_{p,q}\quad\text{with $q$ fixed}.
\]
For every fixed $\sigma\ge0$, integer $k\ge0$, and $1<j<\infty$, there exists $\mu_0(\zeta,\sigma,k,j)$ such that, for $\mu\ge\mu_0$, the following statements hold on $[0,t_*]$:
\begin{enumerate}[label=\textup{(\roman*)},leftmargin=2.2em,itemsep=2pt]
\item $u_0,\ubar(t)\in C_c^\infty(\R^3)$ are divergence free and $\ubar(0)=u_0$;
\item for a constant $C_{\mathrm{in}}(X)>0$ independent of $\zeta$ and $\mu$,
\begin{equation}\label{eq:initial-target-smallness}
  \norm{X}{u_0}\le C_{\mathrm{in}}(X)\zeta^2;
\end{equation}
\item the profile estimate
\begin{equation}\label{eq:approx-profile-scale}
  \norm{W^{\sigma,j}}{\ubar(t)}
  \lesssim_{\zeta,\sigma,j}S_{\sigma,j}
\end{equation}
holds uniformly for $0\le t\le t_*$;
\item $\ubar$ solves \eqref{eq:approx-equation}, and its residual satisfies
\begin{equation}\label{eq:full-residual-package}
  \norm{L^j}{\nabla^kE(t)}
  \lesssim_{\zeta,k,j}S_{k,j}A\nu;
\end{equation}
\item as $\mu\to\infty$ with $\zeta$ fixed,
\begin{equation}\label{eq:scale-separation-package}
  t_*\mu^{2\alpha}\to0,
  \qquad
  \frac{\mu^{2\alpha}}{A\nu}\to0,
  \qquad
  t_*\to0.
\end{equation}
\end{enumerate}
In the Besov case, $C_{\mathrm{in}}(X)$ may depend on the fixed index $q$.
\end{proposition}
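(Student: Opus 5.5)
The plan is to verify the five items in the order (i), (v), (iii), (ii), (iv). The only genuinely new input compared with \cite{Luo2024,Luo2025} is the exponent bookkeeping in (v), which is then used to absorb $E_{\mathrm{diss}}$ in (iv).

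\emph{Item (i)} is structural. For $\mu/\nu$ large, the support of every profile lies in $\{\rho\simeq\mu^{-1}\}$, far from the axis $r=0$, so all fields are smooth and compactly supported. Divergence-freeness of $\ubar$ follows from three facts: the meridional part is $\nabla^\perp_{r,z}\psi$, the swirl component is annihilated by $\divg$, and the vertical corrector $u_c$ is exactly the term needed to cancel the cylindrical contribution $u_{0,r}/r$ in $\divg$. The identity \eqref{eq:approx-equation} with residual \eqref{eq:geometric-residual-components} is obtained by writing $\ubar\cdot\nabla\ubar$ in cylindrical components. One uses \eqref{eq:swirl-transport} (valid because $f'(\mu\rho)=1$ on $\supp g(\mu\rho)$) to cancel $\partial_t\ubar_\theta$ against the meridional transport of the swirl. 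One uses \eqref{eq:meridional-euler} to cancel the meridional self-interaction against $\nabla\pi_{\mathrm E}$. Everything left over is collected in $E_{\mathrm{geom}}$, and $(-\Delta)^\alpha\ubar$ is moved to the residual by definition.

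\emph{Item (v)} is a direct exponent computation from \eqref{eq:nu-theta}--\eqref{eq:tstar-Kstar}.
\begin{itemize}
\item Since $L=A\mu=\zeta^2\mu^{1-s+3/p-b/p}$, we get
\[
t_*\mu^{2\alpha}=\zeta^{-N-2}\mu^{-\delta+b/p}\to0.
\]
\item Since $A\nu=\zeta^2\mu^{1-s+3/p-b(1+1/p)}$, we get
\[
\frac{\mu^{2\alpha}}{A\nu}=\zeta^{-2}\mu^{-\delta+b(1+1/p)}\to0,
\]
because $b=\delta/100$.
\item Finally, $L\to\infty$ because $1-s+3/p-b/p>2\alpha-\delta/2>0$, so $t_*=\zeta^{-N}L^{-1}\to0$.
\end{itemize}

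\emph{Item (iii)} follows from Lemma \ref{lem:toroidal-calculus} applied to each component, with amplitude $a=A$. For $\ubar_\theta$, Lemma \ref{lem:phase-upper-prelim} shows that each derivative falling on $\sin\Phi$ costs at most $\mathcal K(t)\le(1+\zeta^{-N})\mu$, so the constants depend on $\zeta$ but not on $\mu$. The term $u_c$ carries an extra factor $\nu/\mu$. The inhomogeneous norm adds $\norm{L^j}{\ubar}\lesssim AV_j\le S_{\sigma,j}$.

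\emph{Item (ii)}: we need a constant independent of $\zeta$. This holds because at $t=0$ no $\zeta$-dependent phase is present; the profiles $f,g$ are fixed and $A$ enters only linearly. For $X=W^{s,p}$ we bound
\[
\norm{W^{s,p}}{u_0}\lesssim\norm{L^p}{u_0}+\norm{L^p}{\Lambdaop^s u_0}\lesssim A V_p(1+\mu^s)\lesssim\zeta^2,
\]
using \eqref{eq:target-scale}. For $X=B^s_{p,q}$ we use the real interpolation identity $B^s_{p,q}=(W^{s-\eta,p},W^{s+\eta,p})_{1/2,q}$. This gives
\[
\norm{B^s_{p,q}}{u_0}\lesssim_q\norm{W^{s-\eta,p}}{u_0}^{1/2}\norm{W^{s+\eta,p}}{u_0}^{1/2}\lesssim\bigl(\zeta^2\mu^{-\eta}\cdot\zeta^2\mu^{\eta}\bigr)^{1/2}=\zeta^2 .
\]

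\emph{Item (iv)} is where the real content lies, and I expect it to be the main step. Each term of $E_{\mathrm{geom}}$ is a product of two profiles with one explicit factor $r^{-1}\simeq\nu$, or with a factor $u_c\simeq A\nu/\mu$ multiplied by one derivative costing $\lesssim_\zeta\mu$. The pointwise amplitudes are:
\begin{itemize}
\item $\ubar_\theta^2/r$ and $u_{0,r}\ubar_\theta/r$: of size $A^2\nu$;
\item $u_c\partial\ubar$: of size $A\nu\mu^{-1}\cdot A\mu=A^2\nu$;
\item $u_c\partial u_c$: smaller still.
\end{itemize}
Differentiating $k$ more times costs $\mu^k$, with $\zeta$-dependent constants coming from $\Phi$. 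The support volume then gives
\[
\norm{L^j}{\nabla^kE_{\mathrm{geom}}}\lesssim_\zeta A\mu^kV_j\cdot A\nu=S_{k,j}A\nu .
\]
For the nonlocal term, which is not compactly supported, I would not argue pointwise. Instead, I would apply \eqref{eq:toroidal-fractional} to $\nabla^k\ubar$ with order $2\alpha$; this is legitimate since $1<j<\infty$ and Riesz transforms are bounded on $L^j$. This yields
\[
\norm{L^j}{\nabla^kE_{\mathrm{diss}}}\lesssim_\zeta A\mu^{k+2\alpha}V_j=S_{k,j}\mu^{2\alpha}.
\]
By the second limit in (v), this is $\le S_{k,j}A\nu$ for $\mu\ge\mu_0(\zeta,k,j)$.

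The delicate points to check carefully are, first, that no term in the cylindrical expansion is lost, so that every term in the residual genuinely carries a factor $\nu$. Second, the fractional profile estimate must hold for the phase-modulated swirl with $\zeta$-dependent but $\mu$-independent constants. I would establish this by interpolating the integer bounds between orders $\lfloor k+2\alpha\rfloor$ and $\lceil k+2\alpha\rceil$.
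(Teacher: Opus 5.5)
Your proposal is correct and follows essentially the same route as the paper (Lemmas \ref{lem:div-free}--\ref{lem:dissipative-residual}): the divergence check and cylindrical bookkeeping for (i), toroidal profile bounds with $\zeta$-dependent but $\mu$-independent phase constants for (iii), exponent bookkeeping for (v), and for (iv) the bound $E_{\mathrm{diss}}\lesssim S_{k,j}\mu^{2\alpha}$ absorbed into $S_{k,j}A\nu$ via $\mu^{2\alpha}/(A\nu)\to0$. Your variations are cosmetic: you interpolate $B^s_{p,q}$ between $W^{s\mp\eta,p}$ rather than between $L^p$ and $W^{M,p}$, and you apply the fractional profile bound to $\nabla^k\ubar$ rather than using Riesz transforms followed by interpolation. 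One small slip in (v): the inequality $2\alpha-\delta/2>0$ fails when $\delta>4\alpha$, but you do not need it, since the exponent of $L$ is $2\alpha+\delta-b/p>2\alpha>0$ directly, so $t_*\to0$ is unaffected.
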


\subsection{Proof of Proposition \ref{prop:approximation-package}}

\begin{lemma}[Divergence-free correction]\label{lem:div-free}
The field $u_0$ in \eqref{eq:u0} is divergence free.
\end{lemma}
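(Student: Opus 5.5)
The plan is to compute $\divg u_0$ directly in cylindrical coordinates, using that $u_0$ is axisymmetric: none of its cylindrical components depends on $\theta$. Indeed, $\rho$ and $\varphi$ from \eqref{eq:toroidal-coordinates} are functions of $(r,z)$ only. For a smooth axisymmetric field supported away from the axis, the divergence formula reads
\[
  \divg u_0=\frac1r\partial_r(r\,u_{0,r})+\frac1r\partial_\theta u_{0,\theta}+\partial_z\bigl(u_{0,z}+u_c\bigr).
\]
Since $u_{0,\theta}=Ag(\mu\rho)\sin\varphi$ does not depend on $\theta$, the swirl contributes nothing, so only the meridional part and the correction $u_c$ remain.

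Next, I would introduce the stream function $\psi(r,z):=A\mu^{-1}f(\mu\rho)$, as the paper does when it later defines $v$. By the chain rule, $\partial_z\psi=Af'(\mu\rho)\partial_z\rho$ and $\partial_r\psi=Af'(\mu\rho)\partial_r\rho$. Comparing with \eqref{eq:initial-components} gives
\[
  u_{0,r}=-\partial_z\psi,\qquad u_{0,z}=\partial_r\psi,\qquad u_c=\frac{\psi}{r}.
\]
Substituting these identities, the $\partial_r\partial_z\psi$ terms cancel by equality of mixed partials:
\[
  \frac1r\partial_r(-r\,\partial_z\psi)+\partial_z\partial_r\psi
  =-\frac{\partial_z\psi}{r}-\partial_r\partial_z\psi+\partial_z\partial_r\psi
  =-\frac{\partial_z\psi}{r}.
\]
This leftover term is exactly the defect that the planar stream-function field has in cylindrical geometry. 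It is cancelled by $\partial_z u_c=\partial_z(\psi/r)=\partial_z\psi/r$, because $r$ does not depend on $z$. Hence $\divg u_0=0$.

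The only point needing care, and the mild main obstacle, is justifying the cylindrical computation on all of $\R^3$. For $\mu/\nu$ large, $\supp f\subset(1/2,2)$ forces $\psi$, and hence every component, to be supported where $\rho\le 2\mu^{-1}$. On that set $r\ge\nu^{-1}-2\mu^{-1}\simeq\nu^{-1}>0$ by \eqref{eq:r-comparison}. Therefore $\psi$, $\rho$ and $\varphi$ are smooth there (note $\rho\ge\mu^{-1}/2>0$ on the support of $f(\mu\rho)$), and the cylindrical formula is valid. Outside the support the field vanishes identically, so the identity holds everywhere.
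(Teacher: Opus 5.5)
Your proof is correct and follows the paper's argument. You write the meridional part as $(-\partial_z\psi,\partial_r\psi)$ with $\psi=A\mu^{-1}f(\mu\rho)$, so the planar divergence vanishes, and the cylindrical defect $u_{0,r}/r=-\partial_z\psi/r$ is cancelled by $\partial_z u_c$; your observation $u_c=\psi/r$ makes that cancellation transparent, and your check that the support stays away from the axis (with $\rho$ and $r$ bounded below there) supplies a justification the paper leaves implicit.
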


\begin{proof}
Set $\psi=A\mu^{-1}f(\mu\rho)$.  Then
\[
  (u_{0,r},u_{0,z})=(-\partial_z\psi,\partial_r\psi),
\]
so
\[
  \partial_ru_{0,r}+\partial_zu_{0,z}=0.
\]
Moreover,
\[
  \frac{u_{0,r}}r
  =-A\frac{f'(\mu\rho)\partial_z\rho}{r},
  \qquad
  \partial_zu_c
  =A\frac{f'(\mu\rho)\partial_z\rho}{r}.
\]
Since the field is axisymmetric,
\[
  \divg u_0
  =\partial_ru_{0,r}+\frac{u_{0,r}}r+\partial_z(u_{0,z}+u_c)=0.
\]
\end{proof}

\begin{lemma}[Initial profile bounds and smallness]\label{lem:initial-smallness}
For every $\sigma\ge0$ and $1<j<\infty$,
\begin{equation}\label{eq:initial-W-profile-bound}
  \norm{W^{\sigma,j}}{u_0}
  \lesssim_{\sigma,j}S_{\sigma,j}.
\end{equation}
For every $\sigma>0$, $1<j<\infty$, and $1\le q\le\infty$,
\begin{equation}\label{eq:initial-B-profile-bound}
  \norm{B^\sigma_{j,q}}{u_0}
  \lesssim_{\sigma,j,q}S_{\sigma,j}.
\end{equation}
In particular, for either fixed target space
\[
  X=W^{s,p}
  \qquad\text{or}\qquad
  X=B^s_{p,q}\ \text{with $q$ fixed},
\]
there is a constant $C_{\mathrm{in}}(X)>0$ such that
\begin{equation}
  \norm{X}{u_0}\le C_{\mathrm{in}}(X)\zeta^2.
\end{equation}
The constant $C_{\mathrm{in}}(X)$ depends only on the fixed exponents and profiles; in the Besov case it is allowed to depend on the already fixed index $q$.
\end{lemma}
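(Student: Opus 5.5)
The plan is to decompose $u_0$ into finitely many toroidal profiles of the form treated in Lemma \ref{lem:toroidal-calculus} and read off every bound from that lemma. In cylindrical components, $u_{0,\theta}=Ag(\mu\rho)\sin\varphi$ is a single profile $aH(\mu\rho,\varphi)e_\theta$ with $a=A$ and $H(\tau,\varphi)=g(\tau)\sin\varphi$. For the meridional components we use $\partial_z\rho=\sin\varphi$ and $\partial_r\rho=\cos\varphi$, so that $u_{0,r}=-Af'(\mu\rho)\sin\varphi$ and $u_{0,z}=Af'(\mu\rho)\cos\varphi$; these are again profiles with amplitude $A$. The correction is
\[
u_c=A\mu^{-1}f(\mu\rho)/r=A\,(\mu r)^{-1}f(\mu\rho),
\]
which is a profile with one factor $(\mu r)^{-1}$. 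It is therefore covered by the last assertion of Lemma \ref{lem:toroidal-calculus}, with the extra gain $\nu/\mu$. All the profiles $H$ have first variable supported in $[1/2,2]$, a fixed compact subset of $(0,\infty)$. Applying \eqref{eq:toroidal-fractional} to each piece gives $\norm{L^j}{\Lambdaop^\sigma u_0}\lesssim A\mu^\sigma V_j=S_{\sigma,j}$. Applying \eqref{eq:toroidal-integer} with $k=0$ gives $\norm{L^j}{u_0}\lesssim AV_j\le S_{\sigma,j}$, because $\mu\ge1$. Since $\norm{W^{\sigma,j}}{f}\simeq\norm{L^j}{f}+\norm{L^j}{\Lambdaop^\sigma f}$ for $1<j<\infty$, this proves \eqref{eq:initial-W-profile-bound}.

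For the Besov bound \eqref{eq:initial-B-profile-bound}, we would avoid the lossy embedding \eqref{eq:besov-transfer-prelim}, since the target bound must stay at exactly the scale $S_{\sigma,j}$. Instead we interpolate. For $\sigma>0$ choose $0<\sigma_0<\sigma<\sigma_1$ with $\sigma=(\sigma_0+\sigma_1)/2$. The real interpolation identity $(W^{\sigma_0,j},W^{\sigma_1,j})_{1/2,q}=B^\sigma_{j,q}$ then gives
\[
\norm{B^\sigma_{j,q}}{u_0}\lesssim\norm{W^{\sigma_0,j}}{u_0}^{1/2}\norm{W^{\sigma_1,j}}{u_0}^{1/2}\lesssim (S_{\sigma_0,j}S_{\sigma_1,j})^{1/2}=S_{\sigma,j}.
\]
The last equality holds because $S_{\sigma,j}=A\mu^\sigma V_j$ is geometric in $\sigma$. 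This is valid for every $1\le q\le\infty$. If one prefers a more direct route, the homogeneous part can be estimated through the Littlewood--Paley characterization. The low frequencies are then controlled by the $L^j$ norm, and the high frequencies by $\mu^{-M}\norm{L^j}{\Lambdaop^{M}u_0}$ with the dyadic sum split at $2^k\simeq\mu$. This again yields $S_{\sigma,j}$.

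Finally, take $\sigma=s$ and $j=p$; note that $p\ge2>1$. By \eqref{eq:target-scale},
\[
S_{s,p}=A\mu^sV_p=\zeta^2\mu^{-s+2/p}\nu^{1/p}\mu^s\mu^{-2/p}\nu^{-1/p}=\zeta^2.
\]
This gives $\norm{X}{u_0}\le C_{\mathrm{in}}(X)\zeta^2$, where the constant comes from Lemma \ref{lem:toroidal-calculus} and from the interpolation constants. It depends only on $f$, $g$, $s$, $p$, and also on $q$ in the Besov case. In particular it does not depend on $\zeta$ or $\mu$: the amplitude $A$ enters only linearly, and $\zeta$ appears only through $A$.

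The main point to check is that the implicit constants in Lemma \ref{lem:toroidal-calculus} are uniform in $\mu$ and $\nu$ once $\mu/\nu$ is large. This holds because $\mu/\nu=\mu^{b}\to\infty$ by \eqref{eq:nu-theta}. One also has to check that the lemma's hypothesis is met by the merely compactly supported factor $f(\mu\rho)$ in $u_c$; this is the case, since $\supp f\subset(1/2,2)$. With these two checks in place, the remaining steps are routine.
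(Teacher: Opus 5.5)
Your proof is correct and follows essentially the paper's route: both reduce everything to the toroidal profile estimates of Lemma \ref{lem:toroidal-calculus}, with $u_c$ carrying the extra $\nu/\mu$ gain. Both then obtain the Besov bound by real interpolation, exploiting that $S_{\sigma,j}=A\mu^\sigma V_j$ is geometric in $\sigma$. The differences are inessential. The paper works at integer orders and interpolates between $L^j$ and $W^{M,j}$ with an integer $M>\sigma$. You instead use the fractional estimate \eqref{eq:toroidal-fractional} directly and interpolate between two Bessel-potential orders straddling $\sigma$.
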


\begin{proof}
The three principal components in \eqref{eq:initial-components} are toroidal profiles of amplitude $A$.  The correction $u_c$ has amplitude $A\nu/\mu=A\vartheta$ and is therefore smaller.  Lemma \ref{lem:toroidal-calculus} gives, at every integer order $M\ge0$,
\[
  \norm{W^{M,j}}{u_0}\lesssim_{M,j}A\mu^MV_j.
\]
Interpolation between two neighboring integer orders proves \eqref{eq:initial-W-profile-bound}, including $\sigma=0$, where $W^{0,j}=L^j$.  If $\sigma>0$, choose an integer $M>\sigma$.  Real interpolation between $L^j$ and $W^{M,j}$, equivalently the standard finite-difference profile estimate, gives
\[
  \norm{B^\sigma_{j,q}}{u_0}
  \lesssim_{\sigma,j,q}
  \norm{L^j}{u_0}^{1-\sigma/M}
  \norm{W^{M,j}}{u_0}^{\sigma/M}
  \lesssim_{\sigma,j,q}A\mu^\sigma V_j,
\]
which proves \eqref{eq:initial-B-profile-bound}.  Taking $\sigma=s$ and $j=p$, and then using \eqref{eq:target-scale}, proves \eqref{eq:initial-target-smallness}.  No $B^0_{j,q}$ estimate for arbitrary $q$ is asserted.
\end{proof}

\begin{lemma}[Scale separation]\label{lem:scale-separation}
For fixed $\zeta$ and $\mu\to\infty$,
\begin{equation}\label{eq:time-separation}
  t_*\mu^{2\alpha}
  =\zeta^{-N-2}\mu^{-\delta+b/p}
  \longrightarrow0,
\end{equation}
and
\begin{equation}\label{eq:residual-separation}
  \frac{\mu^{2\alpha}}{A\nu}
  =\zeta^{-2}\mu^{-\delta+b(1+1/p)}
  \longrightarrow0.
\end{equation}
Moreover, $t_*\to0$ as $\mu\to\infty$.
\end{lemma}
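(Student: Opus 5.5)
The statement is a direct exponent computation from the parameter definitions \eqref{eq:delta-b}, \eqref{eq:nu-theta}, \eqref{eq:amplitude-scales} and \eqref{eq:tstar-Kstar}. The plan is to write $A$, $L$, $t_*$ and $A\nu$ as powers of $\mu$, with $\zeta$ frozen, and then read off the sign of each exponent from $b=\delta/100$.

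\emph{Powers of $\mu$.} Since $\nu=\mu^{1-b}$, we have $\nu^{1/p}=\mu^{(1-b)/p}$, so
\[
  A=\zeta^2\mu^{-s+3/p-b/p},
  \qquad
  L=A\mu=\zeta^2\mu^{1-s+3/p-b/p}.
\]
Hence
\[
  t_*=\zeta^{-N}L^{-1}=\zeta^{-N-2}\mu^{-1+s-3/p+b/p}.
\]
Multiplying by $\mu^{2\alpha}$ gives the exponent
\[
  2\alpha-1+s-\tfrac3p+\tfrac bp=-\delta+\tfrac bp,
\]
by the definition of $\delta$, which is \eqref{eq:time-separation}. Similarly,
\[
  A\nu=\zeta^2\mu^{1-s+3/p-b/p-b},
\]
so $\mu^{2\alpha}/(A\nu)=\zeta^{-2}\mu^{-\delta+b(1+1/p)}$, which is \eqref{eq:residual-separation}.

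\emph{Signs of the exponents.} Since $p\ge2$ and $b=\delta/100$, we get $b/p\le\delta/200$ and $b(1+1/p)\le 3\delta/200$. Both exponents are therefore at most $-\tfrac{197}{200}\delta<0$. With $\zeta$ fixed, the prefactors $\zeta^{-N-2}$ and $\zeta^{-2}$ are constants, so both quantities tend to $0$ as $\mu\to\infty$.

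\emph{The limit $t_*\to0$.} The exponent of $\mu$ in $t_*$ is
\[
  -1+s-\tfrac3p+\tfrac bp=-(2\alpha+\delta)+\tfrac bp,
\]
which is negative because $\alpha>0$ and $b/p<\delta$.

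There is no real obstacle beyond bookkeeping. The only point needing care is that the small parameter $b$ is consumed at most with the factor $1+1/p\le 3/2$, and the choice $b=\delta/100$ leaves ample margin.
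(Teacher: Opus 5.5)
Your proposal is correct and follows the paper's proof: you do the same direct exponent bookkeeping from the parameter definitions, and you use $b=\delta/100$ to make both exponents strictly negative. You also obtain $t_*\to0$ from $t_*=\zeta^{-N}L^{-1}$ with $L=\zeta^2\mu^{2\alpha+\delta-b/p}$, exactly as the paper does; your explicit margin $-\tfrac{197}{200}\delta$ simply spells out the paper's one-line sign check.
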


\begin{proof}
The first two identities follow directly from \eqref{eq:delta-b}, \eqref{eq:amplitude-scales}, and \eqref{eq:tstar-Kstar}.  Since $b=\delta/100$, both exponents of $\mu$ are strictly negative.  Finally,
\[
  L=\zeta^2\mu^{2\alpha+\delta-b/p},
\]
whose exponent is positive, and hence $t_*=\zeta^{-N}L^{-1}\to0$.
\end{proof}

\begin{lemma}[Uniform approximate-profile bounds]\label{lem:approx-profile-bounds}
For every $\sigma\ge0$, $1<j<\infty$, and $0\le t\le t_*$,
\begin{equation}
  \norm{W^{\sigma,j}}{\ubar(t)}
  \lesssim_{\zeta,\sigma,j}S_{\sigma,j}.
\end{equation}
In particular,
\begin{equation}\label{eq:approx-lipschitz}
  \norm{L^\infty}{\nabla\ubar(t)}
  \le C_{\mathrm{app}}(\zeta)L,
  \qquad 0\le t\le t_*.
\end{equation}
\end{lemma}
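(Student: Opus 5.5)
The plan is to reduce Lemma \ref{lem:approx-profile-bounds} to Lemma \ref{lem:toroidal-calculus} combined with the phase bounds of Lemma \ref{lem:phase-upper-prelim}. Since $u_{0,r}$, $u_{0,z}$ and $u_c$ do not depend on $t$, Lemma \ref{lem:initial-smallness} already controls them: the first two are toroidal profiles of amplitude $A$, and $u_c$ has amplitude $A\vartheta\le A$. The only component that needs new work is the transported swirl $\ubar_\theta(t)e_\theta=Ag(\mu\rho)\sin\Phi(t,\rho,\varphi)\,e_\theta$. The difficulty is that $\Phi$ is no longer a fixed function of $(\mu\rho,\varphi)$, so Lemma \ref{lem:toroidal-calculus} does not apply to it verbatim.

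\textbf{Integer orders.} First I will prove the bound pointwise at integer orders. Expand $\nabla^k\bigl(g(\mu\rho)\sin\Phi\,e_\theta\bigr)$ with the Leibniz rule and Fa\`a di Bruno's formula. Three kinds of factors appear:
\begin{enumerate}[label=\textup{(\alph*)},leftmargin=2.2em,itemsep=2pt]
\item derivatives of $g(\mu\rho)$, bounded by $\mu^{i}$ by \eqref{eq:r-comparison};
\item derivatives of $e_\theta$, bounded by $\nu^{i}\le\mu^{i}$;
\item products $\prod_m\nabla^{j_m}\Phi$, bounded using \eqref{eq:phase-upper-prelim-est} by
\[
  \prod_m \mathcal K(t)\mu^{j_m-1}\le\bigl(\mathcal K(t)/\mu\bigr)^{k}\mu^{\sum j_m}.
\]
\end{enumerate}
For $t\le t_*$ we have $\mathcal K(t)/\mu\le 1+\zeta^{-N}$, so the total pointwise bound is $\lvert\nabla^k\ubar_\theta e_\theta\rvert\le C_{k}(1+\zeta^{-N})^kA\mu^k$. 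This bound is supported in $\{\mu\rho\in\supp g\}$, a set of volume comparable to $\mu^{-2}\nu^{-1}$. Integrating gives $\norm{L^j}{\nabla^k\ubar(t)}\lesssim_{\zeta,k,j}A\mu^kV_j$ for every $1\le j\le\infty$, uniformly in $t\in[0,t_*]$.

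\textbf{Fractional orders.} Next I will pass to real $\sigma$. The inhomogeneous norm is equivalent to $\norm{L^j}{\cdot}+\norm{\dot W^{\sigma,j}}{\cdot}$, and because $S_{0,j}\le S_{\sigma,j}$ for $\mu\ge1$, it suffices to bound $\norm{L^j}{\Lambdaop^\sigma\ubar}$. Write $\sigma=(1-\theta)M+\theta(M+1)$ with $M=\lfloor\sigma\rfloor$. The interpolation inequality \eqref{eq:sobolev-interpolation-prelim} gives
\[
  \norm{\dot W^{\sigma,j}}{\ubar}\lesssim\norm{\dot W^{M,j}}{\ubar}^{1-\theta}\norm{\dot W^{M+1,j}}{\ubar}^{\theta}.
\]
For $1<j<\infty$, the boundedness of Riesz transforms gives $\norm{\dot W^{M,j}}{\cdot}\simeq\norm{L^j}{\nabla^M\cdot}$, and the integer bounds from the previous step then yield $A\mu^\sigma V_j=S_{\sigma,j}$. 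Adding this to the time-independent components completes the main estimate.

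\textbf{Lipschitz bound.} For \eqref{eq:approx-lipschitz}, take $k=1$ and $j=\infty$ in the pointwise estimate, recalling $V_\infty=1$. This gives $\norm{L^\infty}{\nabla\ubar}\le C(\zeta)A\mu=C_{\mathrm{app}}(\zeta)L$.

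\textbf{Main obstacle.} The hard part is bookkeeping the phase derivatives, namely checking that every factor of $\nabla\Phi$ costs at most $\mathcal K(t)\lesssim_\zeta\mu$ and never more. This is exactly the content of Lemma \ref{lem:phase-upper-prelim}; the resulting constants blow up like $\zeta^{-Nk}$, which is permitted here because $\zeta$ is fixed before $\mu$. I would also check that the requirement $\mu/\nu$ large in Lemma \ref{lem:toroidal-calculus} is met, which holds because $\mu/\nu=\mu^{b}\to\infty$.
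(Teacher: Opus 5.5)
Your proposal is correct and follows the paper's argument. The paper likewise uses Lemma \ref{lem:phase-upper-prelim} to show that each phase derivative costs at most $C_\zeta\mu$ on $[0,t_*]$, then applies the toroidal profile calculus of Lemma \ref{lem:toroidal-calculus} (integer orders via the chain rule and the volume count, fractional orders by interpolation), and notes that the Lipschitz bound follows directly from the case $k=1$, $j=\infty$; your explicit Fa\`a di Bruno bookkeeping just spells out the uniformity in $t$ that the paper leaves implicit.
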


\begin{proof}
By Lemma \ref{lem:phase-upper-prelim}, the derivatives of the phase on $[0,t_*]$ cost at most $C_\zeta\mu$ per derivative.  Lemma \ref{lem:toroidal-calculus} then yields \eqref{eq:approx-profile-scale}.  The Lipschitz estimate follows either directly or by taking an integer order above $1+3/j$ and applying Sobolev embedding.
\end{proof}

\begin{lemma}[Geometric residual]\label{lem:geometric-residual}
For every integer $k\ge0$, every $1<j<\infty$, and $0\le t\le t_*$,
\begin{equation}\label{eq:geometric-residual-bound}
  \norm{L^j}{\nabla^kE_{\mathrm{geom}}(t)}
  \lesssim_{\zeta,k,j}S_{k,j}A\nu.
\end{equation}
\end{lemma}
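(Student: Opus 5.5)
The plan is to bound each term in \eqref{eq:geometric-residual-components} separately and verify that it is a product of toroidal profiles whose total amplitude is $A\cdot A\nu$, with each derivative costing at most $C_\zeta\mu$. Every term is a finite sum of products of smooth functions of $(\mu\rho,\Phi)$ or $(\mu\rho,\varphi)$. The coefficients are powers of $A$, $\mu$ and $r^{-1}$, possibly multiplied by cylindrical unit vectors. The Leibniz rule together with Lemma \ref{lem:phase-upper-prelim} and \eqref{eq:r-comparison} gives the pointwise bound $\abs{\nabla^k(\cdot)}\lesssim_{\zeta,k}(\text{amplitude})\,\mu^k$. The support lies in a set of volume $\simeq\mu^{-2}\nu^{-1}$, so integrating produces the factor $V_j$, exactly as in Lemma \ref{lem:toroidal-calculus}. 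It therefore suffices to compute the amplitude of each term and check that it is $\lesssim A^2\nu$. Note that $S_{k,j}A\nu=A^2\nu\mu^kV_j$.

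The amplitudes are as follows.
\begin{itemize}[leftmargin=2.2em]
\item For $u_c\partial_z\ubar_\theta$: here $u_c$ has size $A\mu^{-1}\nu$ and $\partial_z\ubar_\theta$ has size $A\mathcal K(t)\lesssim_\zeta A\mu$. The product is $\lesssim_\zeta A^2\nu$.
\item For $u_{0,r}\ubar_\theta/r$: the three factors have sizes $A$, $A$ and $\nu$, giving $A^2\nu$.
\item For the centrifugal term $\ubar_\theta^2/r$: the size is again $A^2\nu$.
\item For $u_c\partial_zu_{0,r}$: the factors have sizes $A\nu\mu^{-1}$ and $A\mu$, giving $A^2\nu$.
\item For $u_{0,r}\partial_ru_c+u_{0,z}\partial_zu_c$: here $\nabla u_c$ has size $A\nu$ at leading order, since the derivative falls on $f(\mu\rho)$. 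A derivative falling on $r^{-1}$ gives the smaller size $A\nu^2\mu^{-1}$. Multiplying by $A$ gives $A^2\nu$.
\item For $u_c\partial_zu_{0,z}$: this is $A\nu\mu^{-1}\cdot A\mu=A^2\nu$.
\item For $u_c\partial_zu_c$: this is $A\nu\mu^{-1}\cdot A\nu=A^2\nu\vartheta$, which is even smaller.
\end{itemize}
The time dependence enters only through $\Phi$. Its derivatives are controlled uniformly on $[0,t_*]$ by $(1+\zeta^{-N})\mu$ per derivative, by Lemma \ref{lem:phase-upper-prelim}. This is where the constant depends on $\zeta$.

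The remaining technical point is that $E_{\mathrm{geom}}$ is expressed in cylindrical components, so the $L^j$ norm of $\nabla^k$ of the Cartesian vector field must be controlled. I would write $E_{\mathrm{geom}}=\sum_{e}(E_{\mathrm{geom}})_e\,e$ and differentiate the basis vectors $e_r,e_\theta$. Each such derivative produces a factor bounded by $r^{-1}\simeq\nu\le\mu$ together with its derivatives, by \eqref{eq:r-comparison}, so it is harmless. The factors $\partial_z\rho$ and $\partial_r\rho$ appearing in $u_{0,r}$ and $u_{0,z}$ equal $\sin\varphi$ and $\cos\varphi$, which are profiles of the admissible type. I expect the only real bookkeeping obstacle to be checking that no term hides an extra factor $\mu/\nu$. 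Such a factor would arise from a derivative of $u_c$ or of $\ubar_\theta$ being multiplied by a large coefficient. The list above rules this out, because each term with a transverse derivative is paired with the factor $A\nu\mu^{-1}$ from $u_c$, or with the factor $r^{-1}\simeq\nu$. Once this is verified, summing the finitely many terms gives \eqref{eq:geometric-residual-bound}, provided $\mu\ge\mu_0(\zeta)$ is large enough that \eqref{eq:r-comparison} holds on the support.
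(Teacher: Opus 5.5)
Your proposal is correct and follows the same route as the paper. The paper's proof also notes that each term of $E_{\mathrm{geom}}$ carries either a factor $r^{-1}\simeq\nu$ or the correction $u_c$ of size $A\vartheta$, that every derivative of a principal profile costs at most $C_\zeta\mu$ on $[0,t_*]$, and that integrating over the torus of volume $\simeq\mu^{-2}\nu^{-1}$ produces $V_j$; your term-by-term amplitude table and your explicit handling of derivatives falling on $e_r,e_\theta$ just spell out that Leibniz count in more detail.
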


\begin{proof}
Each term in \eqref{eq:geometric-residual-components} contains either a factor $r^{-1}$ or the correction $u_c$.  The former contributes $\nu$, while the latter has size $A\nu/\mu=A\vartheta$.  A derivative of a principal profile costs at most $C_\zeta\mu$ on $[0,t_*]$.  Thus every Leibniz term has size at most $C_\zeta A^2\nu\mu^k$ on a torus of volume $\simeq\mu^{-2}\nu^{-1}$.  This is exactly \eqref{eq:geometric-residual-bound}.
\end{proof}

\begin{lemma}[Full-space fractional dissipative residual]\label{lem:dissipative-residual}
For every integer $k\ge0$, every $1<j<\infty$, and $0\le t\le t_*$,
\begin{equation}\label{eq:dissipative-residual-bound}
  \norm{L^j}{\nabla^kE_{\mathrm{diss}}(t)}
  \lesssim_{\zeta,k,j}S_{k,j}\mu^{2\alpha}.
\end{equation}
Consequently, for fixed $\zeta$,
\begin{equation}\label{eq:dissipative-relative-bound}
  \norm{L^j}{\nabla^kE_{\mathrm{diss}}(t)}
  \le o_{\mu\to\infty}(1)\,S_{k,j}A\nu.
\end{equation}
The estimates are over all of $\R^3$; no compact-support assertion is made for $E_{\mathrm{diss}}$.
\end{lemma}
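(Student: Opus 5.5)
Since $\nabla^k$ and $(-\Delta)^\alpha$ are both Fourier multipliers, they commute, and
\[
  \nabla^kE_{\mathrm{diss}}(t)=\nabla^k(-\Delta)^\alpha\ubar(t)=\mathcal R_k\,\Lambdaop^{k+2\alpha}\ubar(t),
\]
where $\mathcal R_k$ is a $k$-fold composition of Riesz transforms $R_i=\partial_i\Lambdaop^{-1}$.  For $1<j<\infty$, each $R_i$ is bounded on $L^j(\R^3)$.  This bound holds on the whole space with no support assumption, so it does not matter that $E_{\mathrm{diss}}$ is not compactly supported.  We therefore get
\[
  \norm{L^j}{\nabla^kE_{\mathrm{diss}}(t)}\lesssim_{k,j}\norm{L^j}{\Lambdaop^{k+2\alpha}\ubar(t)}.
\]

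Next we bound the right-hand side by applying the fractional toroidal estimate \eqref{eq:toroidal-fractional} of Lemma \ref{lem:toroidal-calculus} with $\sigma=k+2\alpha$.  We apply it to each cylindrical component of $\ubar(t)$ as given in \eqref{eq:approx-field}.  The terms $u_{0,r}$ and $u_{0,z}$ are profiles of amplitude $A$; the factors $\partial_z\rho,\partial_r\rho$ are functions of $\varphi$ alone and are absorbed into $H$.  The term $u_c$ has amplitude $A\vartheta$ and carries a factor $(\mu r)^{-1}$, which is covered by the last clause of that lemma.  The swirl $\ubar_\theta(t)$ has a time-dependent phase $\Phi(t)$.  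By Lemma \ref{lem:phase-upper-prelim}, every derivative costs at most $C_\zeta\mu$ uniformly for $t\in[0,t_*]$.  Hence the integer-order bounds $\norm{L^j}{\nabla^m\ubar_\theta e_\theta}\lesssim_{\zeta,m}A\mu^mV_j$ hold, and interpolating between neighboring integer orders gives the same fractional bound, exactly as in Lemma \ref{lem:approx-profile-bounds}.  Equivalently, we can invoke \eqref{eq:approx-profile-scale} in its homogeneous form.  Either way,
\[
  \norm{L^j}{\Lambdaop^{k+2\alpha}\ubar(t)}\lesssim_{\zeta,k,j}A\mu^{k+2\alpha}V_j=S_{k,j}\mu^{2\alpha},
\]
which is \eqref{eq:dissipative-residual-bound}.

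The step that needs some care is the homogeneous fractional bound, because $\Lambdaop^\sigma$ is nonlocal.  My first approach is to derive it from the inhomogeneous one via $\norm{L^j}{\Lambdaop^\sigma f}\lesssim\norm{L^j}{J^\sigma f}$, which holds because $\Lambdaop^\sigma J^{-\sigma}$ is an $L^j$ multiplier for $\sigma\ge0$.  However, this gives $A(1+\mu^\sigma)V_j$, which is fine since $\mu\ge1$.  Alternatively, we can use the Gagliardo–Nirenberg-type interpolation \eqref{eq:sobolev-interpolation-prelim} between $\dot W^{k,j}$ and $\dot W^{k+1,j}$, with $\theta=2\alpha\in(0,2)$.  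If $2\alpha\ge1$, we instead interpolate between orders $k+1$ and $k+2$.  In all cases only integer-order pointwise bounds on a set of volume $\simeq\mu^{-2}\nu^{-1}$ are needed.

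Finally, \eqref{eq:dissipative-relative-bound} follows by writing
\[
  S_{k,j}\mu^{2\alpha}=\frac{\mu^{2\alpha}}{A\nu}\,S_{k,j}A\nu
\]
and applying Lemma \ref{lem:scale-separation}, specifically \eqref{eq:residual-separation}.  For fixed $\zeta$, that lemma gives $\mu^{2\alpha}/(A\nu)=\zeta^{-2}\mu^{-\delta+b(1+1/p)}\to0$, because $b=\delta/100$.
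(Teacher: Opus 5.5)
Your proposal is correct and follows essentially the same route as the paper. You reduce $\nabla^k\Lambdaop^{2\alpha}$ to $\Lambdaop^{k+2\alpha}$ by Riesz transforms on all of $\R^3$, then bound $\norm{L^j}{\Lambdaop^{k+2\alpha}\ubar}$ by $A\mu^{k+2\alpha}V_j$ uniformly on $[0,t_*]$, and conclude with the scale separation of Lemma~\ref{lem:scale-separation}. The paper interpolates between $L^j$ and $W^{m,j}$ with $m>k+2\alpha$; your use of the $W^{k+2\alpha,j}$ profile bound together with the bounded multiplier $\Lambdaop^{\sigma}J^{-\sigma}$ is an equally valid shortcut. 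The only slip is cosmetic: in your alternative the interpolation parameter must be $2\alpha$ or $2\alpha-1$, which lie in $[0,1]$, not ``$\theta=2\alpha\in(0,2)$''; you effectively correct this in the next sentence.
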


\begin{proof}
Since $E_{\mathrm{diss}}=\Lambdaop^{2\alpha}\ubar$ and $\nabla^k\Lambdaop^{-k}$ is a finite combination of zero-order Riesz multipliers,
\begin{equation}\label{eq:dissipative-riesz-step}
  \norm{L^j}{\nabla^kE_{\mathrm{diss}}}
  \lesssim_{k,j}
  \norm{L^j}{\Lambdaop^{k+2\alpha}\ubar}.
\end{equation}
Choose any integer $m>k+2\alpha$.  Interpolation between order $0$ and order $m$, followed by Lemma \ref{lem:approx-profile-bounds}, gives
\begin{equation}\label{eq:dissipative-interpolation-step}
\begin{aligned}
  \norm{L^j}{\Lambdaop^{k+2\alpha}\ubar}
  &\lesssim
  \norm{L^j}{\ubar}^{1-(k+2\alpha)/m}
  \norm{W^{m,j}}{\ubar}^{(k+2\alpha)/m}\\
  &\lesssim_{\zeta,k,j}
  (AV_j)^{1-(k+2\alpha)/m}
  (A\mu^mV_j)^{(k+2\alpha)/m}\\
  &=A\mu^{k+2\alpha}V_j
  =S_{k,j}\mu^{2\alpha}.
\end{aligned}
\end{equation}
This proves \eqref{eq:dissipative-residual-bound}.  Dividing by $S_{k,j}A\nu$ and using \eqref{eq:residual-separation} proves \eqref{eq:dissipative-relative-bound}.  The use of the global multiplier $\Lambdaop^{2\alpha}$ in \eqref{eq:dissipative-riesz-step} explicitly controls the nonlocal tail throughout $\R^3$.
\end{proof}

\begin{proof}[Proof of Proposition \ref{prop:approximation-package}]
The divergence-free property follows from Lemma \ref{lem:div-free}, and the initial estimate follows from Lemma \ref{lem:initial-smallness}.  Lemma \ref{lem:approx-profile-bounds} gives \eqref{eq:approx-profile-scale}.  Combining Lemmas \ref{lem:geometric-residual} and \ref{lem:dissipative-residual}, and then taking $\mu$ large enough that the relative factor in \eqref{eq:dissipative-relative-bound} is at most one, gives \eqref{eq:full-residual-package}.  The remaining assertions follow from Lemma \ref{lem:scale-separation}.
\end{proof}

\section{Norm growth of the approximate solution}\label{sec:growth}

The main result of this section is Proposition \ref{prop:approx-growth}.  We first identify a fixed region on which the transported phase is nondegenerate.

The phase at the inflation time is
\begin{equation}\label{eq:phase-star}
  \Phi_*:=\Phi(t_*,\rho,\varphi)
  =\varphi-\frac{t_*A}{\rho}.
\end{equation}
A direct calculation gives
\begin{equation}\label{eq:phase-z-derivative}
  \partial_z\Phi_*
  =\frac{\cos\varphi}{\rho}
  +\frac{t_*A}{\rho^2}\sin\varphi.
\end{equation}
Choose a closed interval $J_\varphi\Subset(0,\pi)$ on which $\sin\varphi\ge c_\varphi>0$.  On the set
\begin{equation}\label{eq:nondegenerate-region}
  \mathcal R_*:=\{\mu\rho\in I_g,\ \varphi\in J_\varphi\},
\end{equation}
if $K_*/\mu=\zeta^{-N}$ exceeds a fixed threshold, then
\begin{equation}\label{eq:phase-nondegenerate}
  cK_*\le\abs{\partial_z\Phi_*}\le CK_*.
\end{equation}
Moreover, by Lemma \ref{lem:phase-upper-prelim},
\begin{equation}\label{eq:phase-higher-star}
  \abs{\nabla^j\Phi_*}\lesssim_jK_*\mu^{j-1},
  \qquad j\ge1.
\end{equation}

\begin{proposition}[Growth of the approximate solution]\label{prop:approx-growth}
Let $s>0$ and $1<p<\infty$.
\begin{enumerate}[label=\textup{(\roman*)},leftmargin=2.2em,itemsep=3pt]
\item For every $1\le q\le\infty$, there exists $C_{\mathrm{osc}}^B(s,p,q)\ge1$ such that, if $\zeta^{-N}\ge C_{\mathrm{osc}}^B(s,p,q)$, then
\begin{align}
  \norm{\dot B^s_{p,q}}{\ubar(t_*)}
  &\ge c_{\dot B}(s,p,q)AK_*^sV_p,
  \label{eq:approx-hom-besov-lower}\\
  \norm{B^s_{p,q}}{\ubar(t_*)}
  &\ge c_B(s,p,q)AK_*^sV_p.
  \label{eq:approx-inhom-besov-lower}
\end{align}
\item There exists $C_{\mathrm{osc}}^W(s,p)\ge1$ such that, if $\zeta^{-N}\ge C_{\mathrm{osc}}^W(s,p)$, then
\begin{align}
  \norm{\dot W^{s,p}}{\ubar(t_*)}
  &\ge c_{\dot W}(s,p)AK_*^sV_p,
  \label{eq:approx-hom-sobolev-lower}\\
  \norm{W^{s,p}}{\ubar(t_*)}
  &\ge c_W(s,p)AK_*^sV_p.
  \label{eq:approx-inhom-sobolev-lower}
\end{align}
\end{enumerate}
In every case,
\begin{equation}\label{eq:approx-growth-scale}
  AK_*^sV_p=\zeta^{2-Ns}.
\end{equation}
\end{proposition}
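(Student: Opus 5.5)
The identity \eqref{eq:approx-growth-scale} is immediate: $AK_*^sV_p=A\mu^sV_p\,\zeta^{-Ns}=\zeta^{2-Ns}$ by \eqref{eq:target-scale} and \eqref{eq:tstar-Kstar}.

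For the homogeneous Besov lower bound \eqref{eq:approx-hom-besov-lower} we use the directional finite-difference characterization \eqref{eq:directional-besov} with $m>s$ an integer, restricted to a single dyadic window $h\in[h_0,2h_0]$ with $h_0=\kappa K_*^{-1}$ and $\kappa$ small fixed. This gives
\[
  \norm{\dot B^s_{p,q}}{\ubar(t_*)}\gtrsim h_0^{-s}\inf_{h\in[h_0,2h_0]}\norm{L^p}{\Delta^m_{he_3}\ubar_\theta(t_*)e_\theta},
\]
and only the $\theta$-component matters. On $\mathcal R_*$ we Taylor expand $g(\mu\rho(x+\tau e_3))$ and $\Phi_*(x+\tau e_3)$ for $|\tau|\le m h$. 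The amplitude varies by $O(\mu h)=O(\kappa\zeta^{N})$, and $e_\theta$ is unchanged by $z$-shifts. The phase satisfies
\[
  \Phi_*(x+\tau e_3)=\Phi_*(x)+\tau\partial_z\Phi_*(x)+O(\tau^2K_*\mu)
\]
by \eqref{eq:phase-higher-star}, and the quadratic error is $O(\kappa^2\zeta^N)$. Hence
\[
  \Delta^m_{he_3}\ubar_\theta=Ag(\mu\rho)\,\mathrm{Im}\bigl[e^{i\Phi_*}(e^{ih\partial_z\Phi_*}-1)^m\bigr]+O\bigl(A(\kappa\zeta^N+\kappa^2\zeta^N)\bigr).
\]
By \eqref{eq:phase-nondegenerate}, $h\partial_z\Phi_*\in[c\kappa,2C\kappa]$, so $|e^{ih\partial_z\Phi_*}-1|^m\simeq\kappa^m$. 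The main term is then $A g\,\kappa^m\sin(\Phi_*+\beta(x))$ with a slowly varying phase shift $\beta$.

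The non-cancellation step, which we expect to be the main obstacle, is to bound below $\int_{\mathcal R_*}|\sin(\Phi_*+\beta)|^p$. Here $\Phi_*$ oscillates in $\varphi$ at rate $t_*A/\rho\cdot$, or equivalently in $z$ at rate $\simeq K_*$ by \eqref{eq:phase-nondegenerate}. Integrating along $z$-lines (a weighted coarea argument using the Jacobian $\partial_z\Phi_*$) over many periods, the average of $|\sin|^p$ is a positive constant, up to errors $O(\mu/K_*)=O(\zeta^N)$ from the variation of the weight $g\,r$ and of $\partial_z\Phi_*$ across a period. Since $g\ge c_g$ on $I_g$ and $\mathcal R_*$ has volume $\simeq\mu^{-2}\nu^{-1}$, this yields $\norm{L^p}{\Delta^m_{he_3}\ubar}\gtrsim A\kappa^mV_p$. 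We fix $\kappa$ small first so that $\kappa^m$ dominates the error $\kappa^{m+1}$, then require $\zeta^{N}\le\kappa^{m+2}/C$, which is the threshold $C_{\mathrm{osc}}^B$. This gives $\norm{\dot B^s_{p,q}}{\ubar(t_*)}\gtrsim \kappa^{m-s}AK_*^sV_p$.

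For the inhomogeneous Besov bound \eqref{eq:approx-inhom-besov-lower}, write
\[
  \norm{\dot B^s_{p,q}}{\cdot}\lesssim\norm{B^s_{p,q}}{\cdot}+\norm{\dot B^s_{p,q}}{S_0\cdot}\lesssim\norm{B^s_{p,q}}{\cdot}+\norm{L^p}{\ubar}.
\]
Since $\norm{L^p}{\ubar}\lesssim AV_p$ is smaller than $AK_*^sV_p$ by the factor $\zeta^{Ns}$, it is absorbed after enlarging the threshold. For the Sobolev bound \eqref{eq:approx-hom-sobolev-lower}, take $s_0<s<s_1$ with $s_0>0$, for instance $s_0=s/2$ and $s_1$ an integer, and use \eqref{eq:sobolev-interpolation-prelim} in the form
\[
  \norm{\dot B^{s_0}_{p,\infty}}{\cdot}\lesssim\norm{\dot W^{s,p}}{\cdot}^{1-\theta}\norm{\dot W^{s_1,p}}{\cdot}^{\theta}\quad\text{with }s_0<s,
\]
or more simply $\norm{\dot W^{s_0,p}}{\cdot}\lesssim\norm{\dot W^{s,p}}{\cdot}^{\theta'}\norm{L^p}{\cdot}^{1-\theta'}$. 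The lower side comes from the Besov bound at regularity $s_0$ (via $\dot W^{s_0,p}\hookrightarrow\dot B^{s_0}_{p,\infty}$ for $p\ge2$), giving $\gtrsim AK_*^{s_0}V_p$. The other factor is bounded by $\norm{L^p}{\ubar}\lesssim AV_p$. Solving for $\norm{\dot W^{s,p}}{\ubar}$ gives $\gtrsim AK_*^{s}V_p$, since the powers of $K_*$ match exactly: $K_*^{s_0}=K_*^{s\theta'}$ with $\theta'=s_0/s$. The inhomogeneous bound \eqref{eq:approx-inhom-sobolev-lower} then follows from $\norm{\dot W^{s,p}}{f}\lesssim\norm{W^{s,p}}{f}$ for $s>0$ and $1<p<\infty$.
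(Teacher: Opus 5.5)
Your argument is correct, but it is organized differently from the paper's in both halves.

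\textbf{Besov bound.} The paper first proves integer-derivative bounds in Lemma \ref{lem:integer-derivative-bounds}: a lower bound for $\norm{L^p}{\partial_z^m\ubar_\theta(t_*)}$, obtained from the expansion \eqref{eq:main-derivative-expansion} and Lemma \ref{lem:noncancellation} on a fixed box $\mathcal Q$, together with an upper bound at order $m+1$. It then gets the Besov bound from $\Delta^m_{he_z}f=h^m\partial_z^mf+R_h$ with $h\simeq c_0K_*^{-1}$. You instead expand $\Delta^m_{he_3}\ubar_\theta$ directly as $Ag\,\mathrm{Im}[e^{i\Phi_*}(e^{i\omega}-1)^m]$ with $\omega=h\partial_z\Phi_*$. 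This avoids the order-$(m+1)$ upper bound. The price is that you must run non-cancellation with the $x$-dependent weight $g^p\abs{2\sin(\omega/2)}^{mp}$ and the $x$-dependent shift $\beta=m\omega/2+m\pi/2$. That works once you absorb $\beta$ into the phase. Since $\partial_z\beta=O(\kappa\mu)$, $\partial_z^2\beta=O(\kappa\mu^2)$ and $\partial_z\omega/\omega=O(\mu)$, the hypotheses of Lemma \ref{lem:noncancellation} hold with $\lambda\simeq K_*$ on $z$-fibres of length $\simeq\mu^{-1}$, taken in a fixed box as the paper does.

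\textbf{Sobolev bound.} The paper interpolates the integer order $m$ between $s$ and $m+1$. You interpolate $s_0=s/2$ between $0$ and $s$, using the $\dot B^{s_0}_{p,\infty}$ lower bound and the trivial, $\zeta$-uniform estimate $\norm{L^p}{\ubar(t_*)}\lesssim AV_p$. Your route derives the Sobolev bound from the Besov one with no high-derivative upper bounds at all. The paper's route keeps both lower bounds downstream of a single integer-derivative lemma.

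\textbf{Inhomogeneous Besov norm.} Your absorption step for $B^s_{p,q}$ is fine. The paper's direct comparison, which uses $\norm{L^p}{f}\lesssim\norm{B^s_{p,q}}{f}$ for $s>0$, needs no enlargement of the threshold.

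Two slips to fix:
\begin{enumerate}
\item The first interpolation inequality you write, bounding $\dot B^{s_0}_{p,\infty}$ by $\dot W^{s,p}$ and $\dot W^{s_1,p}$ with $s_0<s<s_1$, is not valid. Here $s_0$ is not a convex combination of $s$ and $s_1$. Only your ``more simply'' version is correct, and it is the one you actually use.
\item The embedding $\dot W^{s_0,p}\hookrightarrow\dot B^{s_0}_{p,\infty}$ holds for all $1<p<\infty$, so no restriction $p\ge2$ is needed. This matters because the proposition is stated for $1<p<\infty$.
\end{enumerate}
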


\subsection{A weighted non-cancellation lemma}

\begin{lemma}[Oscillatory non-cancellation]\label{lem:noncancellation}
Let $1\le p<\infty$ and let $\beta\in\R$ be fixed.  Set
\[
  \mathfrak m_p:=\frac1{2\pi}\int_0^{2\pi}\abs{\sin\tau}^p\dd\tau>0.
\]
There is a constant $C_{p,C_0}>0$ with the following property.  Let $I$ be a bounded interval, let $a\in W^{1,1}(I)$ be nonnegative, and let $\phi\in C^2(I)$ be strictly monotone with
\[
  \inf_I\abs{\phi'}\ge\lambda,
  \qquad
  \sup_I\abs{\phi''}\le C_0\lambda\abs I^{-1}.
\]
Then
\begin{equation}\label{eq:noncancellation-general}
\begin{aligned}
  \int_Ia(y)\abs{\sin(\phi(y)+\beta)}^p\dd y
  &\ge \mathfrak m_p\int_Ia(y)\dd y\\
  &\quad-\frac{C_{p,C_0}}{\lambda}
  \left(\norm{L^1(I)}{a'}+\abs I^{-1}\norm{L^1(I)}{a}\right).
\end{aligned}
\end{equation}
The same conclusion holds with sine replaced by cosine.
\end{lemma}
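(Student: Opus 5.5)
The plan is to change variables to the phase, split $I$ into the pieces on which $\phi$ sweeps exactly one period, and compare each piece with its average. The weight $a$ and the Jacobian $1/\abs{\phi'}$ vary slowly on such a piece, because $\abs{\phi''}$ is small relative to $\lambda/\abs I$.

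Since $\phi$ is strictly monotone, $C^2$, and $\abs{\phi'}\ge\lambda$, we may substitute $\tau=\phi(y)+\beta$. This gives
\[
\int_I a\abs{\sin(\phi+\beta)}^p\dd y=\int_{\phi(I)+\beta} G(\tau)\abs{\sin\tau}^p\dd\tau,
\qquad G:=\frac{a}{\abs{\phi'}}\circ\phi^{-1}.
\]
We then partition the image interval into consecutive full periods $P_i$ of length $2\pi$, plus at most two boundary remnants of length $<2\pi$.

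On each full period, write $G=\bar G_i+(G-\bar G_i)$, where $\bar G_i$ is the mean of $G$ on $P_i$. The mean part contributes exactly $\mathfrak m_p\int_{P_i}G\dd\tau$. Since $\abs{\sin}^p\le1$, the fluctuation part is bounded by
\[
\int_{P_i}\abs{G-\bar G_i}\le 2\pi\int_{P_i}\abs{G'}\dd\tau .
\]
Summing over $i$ and adding back the remnants (where we use the trivial bound) gives
\[
\int G\abs{\sin}^p\ge\mathfrak m_p\int G-C\Bigl(\norm{L^1}{G'}+\sup_{\text{remnants}}\int G\Bigr).
\]
Note that $\int G\dd\tau=\int_I a\dd y$.

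It remains to convert the error terms back to $y$. For the derivative term, $\dd\tau=\abs{\phi'}\dd y$ and
\[
G'\,\abs{\phi'}=\partial_y\Bigl(\frac{a}{\abs{\phi'}}\Bigr)=\frac{a'}{\abs{\phi'}}-\frac{a\,\phi''\operatorname{sgn}\phi'}{\phi'^2}.
\]
Hence
\[
\norm{L^1}{G'}\le\frac{\norm{L^1(I)}{a'}}{\lambda}+\frac{C_0\lambda\abs I^{-1}\norm{L^1(I)}{a}}{\lambda^2}.
\]
This is of the required form. For a remnant, the corresponding $y$-interval $I'$ has length at most $2\pi/\lambda$. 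The pointwise bound $\sup_{I'}a\le \abs I^{-1}\norm{L^1(I)}{a}+\norm{L^1(I)}{a'}$ (one-dimensional $W^{1,1}$ embedding on $I$) then gives a remnant contribution of at most $\frac{2\pi}{\lambda}\bigl(\norm{L^1}{a'}+\abs I^{-1}\norm{L^1}{a}\bigr)$.

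If $\phi(I)$ is shorter than $2\pi$, everything is a remnant, and the same bound applies. Then $\mathfrak m_p\int a$ is dominated by the error, because $\int_I a\le\abs I\sup a$ and $\abs I\le 2\pi/\lambda$. The cosine case follows by replacing $\beta$ with $\beta+\pi/2$.

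The main subtlety is the bookkeeping of the $\phi''$ term. The hypothesis $\sup\abs{\phi''}\le C_0\lambda\abs I^{-1}$ is exactly what turns the Jacobian variation into the $\abs I^{-1}\norm{L^1}{a}/\lambda$ error, so the constant depends only on $p$ and $C_0$. Everything else is routine.
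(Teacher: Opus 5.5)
Your proof is correct, but the averaging step runs differently from the paper's. The outer layers coincide: both proofs change variables to the phase, and your computation of $\|G'\|_{L^1}$ through $\partial_y(a/\abs{\phi'})$ is exactly \eqref{eq:transformed-weight-variation}.

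The difference is in how the oscillation is averaged out.

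\textbf{The paper's route.} It integrates by parts once against the bounded zero-mean periodic primitive $G_{p,\beta}$ of $\abs{\sin(\tau+\beta)}^p-\mathfrak m_p$. Full periods and incomplete ends are then treated together, producing
\begin{itemize}
\item an interior term $\lesssim\|b'\|_{L^1(J)}$, and
\item a boundary term $\lesssim\|b\|_{L^\infty(J)}$, controlled by the one-dimensional $W^{1,1}\subset L^\infty$ bound on $J$ together with $\abs J\ge\lambda\abs I$.
\end{itemize}

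\textbf{Your route.} You make the period structure explicit.
\begin{itemize}
\item On each full period, the mean of the weight reproduces $\mathfrak m_p\int_{P_i}G$ exactly, and the fluctuation costs at most $2\pi\int_{P_i}\abs{G'}$.
\item The incomplete pieces are simply discarded. Their cost is $\sup_I a$ times the length $\le 2\pi/\lambda$ of their $y$-preimages. Here you apply the embedding to $a$ on $I$ rather than to $b$ on $J$.
\end{itemize}

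\textbf{Comparison.} Your version avoids constructing the primitive and gives explicit constants depending only on $C_0$, since it uses only periodicity and $0\le\abs{\sin}^p\le1$. The paper's version is more compact: a single integration by parts replaces the bookkeeping of periods and remnants.

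Two cosmetic points:
\begin{itemize}
\item With $\tau=\phi(y)+\beta$, the weight should be $G=(a/\abs{\phi'})\circ\phi^{-1}(\cdot-\beta)$.
\item There can be two remnants, so the ``sup'' over remnants should be a sum. Alternatively, start the partition at an endpoint of the image interval, which leaves only one remnant.
\end{itemize}
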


\begin{proof}
Let $J:=\phi(I)$, viewed as an unoriented interval, let $y=y(\tau)$ be the inverse map, and put
\[
  b(\tau):=\frac{a(y(\tau))}{\abs{\phi'(y(\tau))}}.
\]
Then $b\ge0$, $b\in W^{1,1}(J)$, and the change of variables gives
\begin{equation}\label{eq:noncancellation-change-variable}
  \int_Ia(y)\abs{\sin(\phi(y)+\beta)}^p\dd y
  =\int_Jb(\tau)\abs{\sin(\tau+\beta)}^p\dd\tau,
  \qquad
  \int_Jb=\int_Ia.
\end{equation}
Let $G_{p,\beta}$ be the bounded $2\pi$-periodic primitive of
\[
  \abs{\sin(\tau+\beta)}^p-\mathfrak m_p
\]
with zero mean.  Integration by parts on $J$ and the one-dimensional estimate
\[
  \norm{L^\infty(J)}{b}
  \le \abs J^{-1}\norm{L^1(J)}{b}+\norm{L^1(J)}{b'}
\]
yield
\begin{equation}\label{eq:periodic-primitive-bound}
  \left|\int_Jb(\tau)
  \bigl(\abs{\sin(\tau+\beta)}^p-\mathfrak m_p\bigr)\dd\tau\right|
  \le C_p\left(\norm{L^1(J)}{b'}+\abs J^{-1}\norm{L^1(J)}{b}\right).
\end{equation}
Writing $g_\phi(y):=\abs{\phi'(y)}$, differentiating $b=a/g_\phi$, and using $\dd\tau=g_\phi(y)\dd y$, we obtain
\begin{align}
  \norm{L^1(J)}{b'}
  &=\int_I\left|\frac{\dd}{\dd y}\left(\frac{a}{g_\phi}\right)\right|\dd y\notag\\
  &\le \frac1\lambda\norm{L^1(I)}{a'}
  +\frac{\sup_I\abs{\phi''}}{\lambda^2}\norm{L^1(I)}{a}\notag\\
  &\le \frac1\lambda\norm{L^1(I)}{a'}
  +\frac{C_0}{\lambda\abs I}\norm{L^1(I)}{a}.
  \label{eq:transformed-weight-variation}
\end{align}
Moreover, $\abs J\ge\lambda\abs I$ and $\norm{L^1(J)}{b}=\norm{L^1(I)}{a}$.  Substituting these bounds into \eqref{eq:periodic-primitive-bound} and then using \eqref{eq:noncancellation-change-variable} proves \eqref{eq:noncancellation-general}.  The cosine case is a fixed phase shift.
\end{proof}

\subsection{Oscillatory derivative growth}

\begin{lemma}[Integer derivative bounds]\label{lem:integer-derivative-bounds}
Let $m\ge1$ be a fixed integer and $1<p<\infty$.  There exists $C_{\mathrm{osc}}(m,p)\ge1$, depending only on $m,p$ and the fixed profiles, such that if
\begin{equation}\label{eq:osc-threshold-integer}
  \zeta^{-N}\ge C_{\mathrm{osc}}(m,p),
\end{equation}
then
\begin{equation}\label{eq:integer-lower}
  \norm{L^p}{\partial_z^m\ubar_\theta(t_*)}
  \ge c_{m,p}AK_*^mV_p.
\end{equation}
For every $1<j<\infty$,
\begin{equation}\label{eq:integer-upper}
  \norm{L^j}{\nabla^m\ubar(t_*)}
  \le C_{m,j}A(\mu+K_*)^mV_j.
\end{equation}
In particular, when $K_*/\mu\ge1$, the right-hand side is bounded by $C_{m,j}AK_*^mV_j$.
\end{lemma}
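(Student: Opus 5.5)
The upper bound \eqref{eq:integer-upper} comes first and is routine; the lower bound \eqref{eq:integer-lower} will come from Lemma \ref{lem:noncancellation}.

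\emph{Upper bound.} Each component of $\ubar(t_*)$ is a toroidal profile of amplitude at most $A$, times $\sin\Phi_*$ or a smooth function of $(\mu\rho,\varphi)$, in the cylindrical basis. By Fa\`a di Bruno and \eqref{eq:phase-higher-star}, each derivative of $\sin\Phi_*$ costs at most $C(\mu+K_*)$: a product of $\nabla^{j_i}\Phi_*$ with $\sum j_i=m'$ is bounded by $\prod K_*\mu^{j_i-1}\le (K_*+\mu)^{m'}$. Derivatives of $g(\mu\rho)$, of $\varphi$, and of the basis vectors cost at most $\mu$ by \eqref{eq:r-comparison}. Hence $\abs{\nabla^m\ubar(t_*)}\lesssim_m A(\mu+K_*)^m$ pointwise on a set of volume $\simeq\mu^{-2}\nu^{-1}$, which gives \eqref{eq:integer-upper} as in Lemma \ref{lem:toroidal-calculus}.

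\emph{Lower bound.} Write $\partial_z^m\ubar_\theta(t_*)$ as a leading term plus a remainder:
\[
\partial_z^m\ubar_\theta(t_*)=Ag(\mu\rho)(\partial_z\Phi_*)^m\sin\bigl(\Phi_*+m\pi/2\bigr)+R_m .
\]
In $R_m$, each Leibniz/Fa\`a di Bruno term has at most $m-1$ factors of size $K_*$ and at least one extra factor $\mu$. So $\abs{R_m}\lesssim A K_*^{m-1}\mu$, and $\norm{L^p}{R_m}\lesssim AK_*^mV_p\,\zeta^{N}$. It therefore suffices to show
\[
\norm{L^p(\mathcal R_*)}{\text{leading term}}\ge cAK_*^mV_p ,
\]
and then take $\zeta^{-N}$ above a threshold so that the remainder is absorbed. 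The $e_\theta$ component of the vector field equals $\ubar_\theta$, so lower-bounding the scalar $\partial_z^m\ubar_\theta$ is enough. If one prefers to pass to the Cartesian field $\partial_z^m\ubar$, note that $\partial_z$ commutes with the cylindrical basis.

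On $\mathcal R_*$, \eqref{eq:phase-nondegenerate} gives $\abs{\partial_z\Phi_*}^{mp}\ge cK_*^{mp}$ and $g\ge c_g$ on $I_g$. So it remains to show
\[
\int_{\mathcal R_*}\abs{\sin(\Phi_*+m\pi/2)}^p\gtrsim\operatorname{vol}(\mathcal R_*)\simeq\mu^{-2}\nu^{-1}.
\]
Work in coordinates $(r,\theta,z)$, with volume element $r\,\dd r\,\dd\theta\,\dd z$ and $r\simeq\nu^{-1}$. For fixed $r$ with $\mu(r-\nu^{-1})$ in a suitable range, the $z$-slice of $\mathcal R_*$ contains an interval $I$ of length $\simeq\mu^{-1}$. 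Apply Lemma \ref{lem:noncancellation} on $I$ in the variable $z$ with:
\begin{itemize}
\item phase $\phi=\Phi_*$ and $\lambda=cK_*$;
\item $\abs{\phi''}\lesssim K_*\mu\le C_0\lambda\abs I^{-1}$, by \eqref{eq:phase-higher-star};
\item weight $a$ a fixed smooth cutoff adapted to $I$ (times $r$), so that $\norm{L^1(I)}{a'}+\abs I^{-1}\norm{L^1(I)}{a}\lesssim \int_I a\,\cdot\mu$.
\end{itemize}
Monotonicity of $\phi$ follows from $\partial_z\Phi_*\neq0$ on $\mathcal R_*$. The error term is then $\lesssim(\mu/K_*)\int_I a=\zeta^N\int_Ia$, so for $\zeta^{-N}$ large we get $\int_I a\abs{\sin}^p\ge\frac{\mathfrak m_p}{2}\int_I a$. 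Integrating over $r$ and $\theta$ gives the volume lower bound.

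The main obstacle is choosing the slices so that $I$ lies inside $\mathcal R_*$ with length $\simeq\mu^{-1}$, uniformly over a set of $r$ of measure $\simeq\mu^{-1}$. I would take $\mu(r-\nu^{-1})$ in a small interval near $\mu\rho_0\cos\varphi_0$, with $\rho_0\in I_g$ and $\varphi_0=\pi/2$; the circle $\mu\rho=\mu\rho_0$ is then crossed transversally in $z$, so an interval of length $\simeq\mu^{-1}$ in $z$ stays in $\mathcal R_*$. The other delicate point is tracking the remainder power count so that the threshold depends only on $m$, $p$, and the profiles.
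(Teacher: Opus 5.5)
Your proposal is correct and follows the paper's proof essentially step for step. It uses the same leading-term/remainder split with remainder $O(AK_*^{m-1}\mu V_p)$, and the same fixed box near a point $(\varrho_0,\varphi_0)$: your slab $\mu(r-\nu^{-1})\approx 0$ is the paper's rectangle $\mathcal Q$ in the scaled variables $\xi=\mu(r-R)$, $y=\mu z$. It also applies Lemma \ref{lem:noncancellation} fibrewise in $z$ with $\lambda\simeq K_*$ and $|\phi''|\lesssim K_*\mu$, and integrates against $r\,\dd r\,\dd\theta$. The only difference is cosmetic: you extract $g^p|\partial_z\Phi_*|^{mp}\gtrsim K_*^{mp}$ pointwise and use a flat cutoff weight, while the paper keeps these factors in the weight and checks $|\partial_y a|\le Ca$. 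Your version is marginally simpler; just choose $J_\varphi$ so that $\pi/2$ lies in its interior, which you are free to do.
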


\begin{proof}
\medskip\noindent\textit{Step 1: derivative expansion.}\quad Repeated differentiation of the swirl gives
\begin{equation}\label{eq:main-derivative-expansion}
  \partial_z^m\ubar_\theta(t_*)
  =Ag(\mu\rho)(\partial_z\Phi_*)^m
  \sin\left(\Phi_*+\frac{m\pi}{2}\right)
  +\mathcal E_m.
\end{equation}
Every term in $\mathcal E_m$ either contains a derivative of $g(\mu\rho)$ or contains at least one phase derivative of order at least two.  Using \eqref{eq:phase-higher-star} and the product rule, one obtains
\begin{equation}\label{eq:derivative-remainder}
  \norm{L^p}{\mathcal E_m}
  \le C_{m,p}AK_*^{m-1}\mu V_p.
\end{equation}

\medskip\noindent\textit{Step 2: a fixed toroidal box.}\quad We now prove the lower bound for the principal term without suppressing the fibre geometry.  Choose
\[
  \varrho_0\in\operatorname{int}I_g,
  \qquad
  \varphi_0\in\operatorname{int}J_\varphi,
\]
and set $\xi_0=\varrho_0\cos\varphi_0$, $y_0=\varrho_0\sin\varphi_0$.  Since $y_0>0$ and the set
\[
  \left\{(\xi,y):
  \sqrt{\xi^2+y^2}\in\operatorname{int}I_g,
  \ \arg(\xi+iy)\in\operatorname{int}J_\varphi\right\}
\]
is open, there are fixed compact intervals $I_\xi$ and $I_y$, of positive length and containing $\xi_0$ and $y_0$, such that
\begin{equation}\label{eq:scaled-fibre-rectangle}
  \mathcal Q:=I_\xi\times I_y
  \Subset
  \left\{(\xi,y):
  \sqrt{\xi^2+y^2}\in I_g,
  \ \arg(\xi+iy)\in J_\varphi\right\}.
\end{equation}
In particular, $y\ge c_{\mathcal Q}>0$ and
$\varrho(\xi,y):=\sqrt{\xi^2+y^2}$ is bounded above and below on $\mathcal Q$.

Put
\begin{equation}\label{eq:scaled-toroidal-variables-growth}
  R:=\nu^{-1},
  \qquad
  \xi:=\mu(r-R),
  \qquad
  y:=\mu z,
  \qquad
  \lambda_*:=\zeta^{-N}=\frac{K_*}{\mu}.
\end{equation}
Then $\mu\rho=\varrho(\xi,y)$ and, on $\mathcal Q$,
\begin{equation}\label{eq:scaled-phase-growth}
  \Phi_*(r,z)
  =\widetilde\Phi_{\lambda_*}(\xi,y)
  :=\arg(\xi+iy)-\frac{\lambda_*}{\varrho(\xi,y)}.
\end{equation}
A direct differentiation gives
\begin{align}
  \partial_y\widetilde\Phi_{\lambda_*}
  &=\frac{\xi}{\varrho^2}+\lambda_*\frac{y}{\varrho^3},
  \label{eq:scaled-phase-first}\\
  \partial_y^2\widetilde\Phi_{\lambda_*}
  &=-\frac{2\xi y}{\varrho^4}
  +\lambda_*\frac{\xi^2-2y^2}{\varrho^5}.
  \label{eq:scaled-phase-second}
\end{align}
Consequently, there are constants $c_{\mathcal Q},C_{\mathcal Q}>0$ and a threshold $\lambda_0\ge1$, depending only on the fixed rectangle, such that whenever $\lambda_*\ge\lambda_0$,
\begin{equation}\label{eq:scaled-phase-uniform-bounds}
  c_{\mathcal Q}\lambda_*
  \le \partial_y\widetilde\Phi_{\lambda_*}(\xi,y)
  \le C_{\mathcal Q}\lambda_*,
  \qquad
  \abs{\partial_y^2\widetilde\Phi_{\lambda_*}(\xi,y)}
  \le C_{\mathcal Q}\lambda_*
\end{equation}
uniformly on $\mathcal Q$.  Notice also that
\begin{equation}\label{eq:z-y-phase-derivative}
  \partial_z\Phi_*=\mu\partial_y\widetilde\Phi_{\lambda_*}.
\end{equation}

\medskip\noindent\textit{Step 3: fibrewise non-cancellation.}\quad For each fixed $\xi\in I_\xi$, define the nonnegative weight
\begin{equation}\label{eq:fibre-weight}
  a_{\xi,\lambda_*}(y)
  :=g(\varrho(\xi,y))^p
  \abs{\partial_y\widetilde\Phi_{\lambda_*}(\xi,y)}^{mp},
  \qquad y\in I_y.
\end{equation}
Because $g\ge c_g>0$ on $I_g$, and because of \eqref{eq:scaled-phase-uniform-bounds},
\begin{equation}\label{eq:fibre-weight-size-variation}
  c\lambda_*^{mp}\le a_{\xi,\lambda_*}(y)
  \le C\lambda_*^{mp},
  \qquad
  \abs{\partial_ya_{\xi,\lambda_*}(y)}
  \le C a_{\xi,\lambda_*}(y),
\end{equation}
with constants uniform in $\xi$, $y$, and $\lambda_*\ge\lambda_0$.  Indeed, after taking a logarithmic derivative, the only factors are $g'(\varrho)/g(\varrho)$, $\partial_y\varrho$, and
$\partial_y^2\widetilde\Phi_{\lambda_*}/
\partial_y\widetilde\Phi_{\lambda_*}$, all uniformly bounded on $\mathcal Q$.

Apply Lemma \ref{lem:noncancellation} on the fixed interval $I_y$ with phase
$\widetilde\Phi_{\lambda_*}(\xi,\cdot)$ and shift $m\pi/2$.  By
\eqref{eq:scaled-phase-uniform-bounds}--\eqref{eq:fibre-weight-size-variation}, its error is at most $C\lambda_*^{-1}$ times the weight mass.  Enlarging $\lambda_0$ if necessary therefore gives, uniformly for $\xi\in I_\xi$,
\begin{equation}\label{eq:fibre-noncancellation-lower}
\begin{aligned}
  &\int_{I_y}a_{\xi,\lambda_*}(y)
  \abs{\sin\left(\widetilde\Phi_{\lambda_*}(\xi,y)
  +\frac{m\pi}{2}\right)}^p\dd y\\
  &\hspace{4cm}\ge c_{m,p,\mathcal Q}\lambda_*^{mp}.
\end{aligned}
\end{equation}

\medskip\noindent\textit{Step 4: the three-dimensional measure.}\quad It remains to restore the three-dimensional Jacobian.  On the physical sub-torus corresponding to
$\theta\in[0,2\pi)$ and $(\xi,y)\in\mathcal Q$,
\[
  r=R+\frac\xi\mu,
  \qquad
  \dd x=r\,\dd r\,\dd\theta\,\dd z
  =\left(R+\frac\xi\mu\right)\mu^{-2}\dd\xi\,\dd y\,\dd\theta.
\]
Since $\mu/\nu\to\infty$, for all sufficiently large $\mu$ one has
$R+\xi/\mu\ge\frac12\nu^{-1}$ on $I_\xi$.  Combining this fact with
\eqref{eq:z-y-phase-derivative} and \eqref{eq:fibre-noncancellation-lower}, we obtain
\begin{align}
  &\norm{L^p}{Ag(\mu\rho)(\partial_z\Phi_*)^m
  \sin\left(\Phi_*+\frac{m\pi}{2}\right)}^p\notag\\
  &\quad\ge cA^p\mu^{mp-2}\nu^{-1}
  \int_{I_\xi}\int_{I_y}a_{\xi,\lambda_*}(y)
  \abs{\sin\left(\widetilde\Phi_{\lambda_*}(\xi,y)
  +\frac{m\pi}{2}\right)}^p\dd y\dd\xi\notag\\
  &\quad\ge c_{m,p}A^p(\lambda_*\mu)^{mp}\mu^{-2}\nu^{-1}
  =c_{m,p}A^pK_*^{mp}V_p^p.
  \label{eq:main-term-lower-pth-power}
\end{align}
Taking the $p$th root proves
\begin{equation}\label{eq:main-term-lower}
  \norm{L^p}{Ag(\mu\rho)(\partial_z\Phi_*)^m
  \sin\left(\Phi_*+\frac{m\pi}{2}\right)}
  \ge c_{m,p}AK_*^mV_p.
\end{equation}
The ratio of \eqref{eq:derivative-remainder} to the right-hand side of
\eqref{eq:main-term-lower} is $O(\mu/K_*)=O(\lambda_*^{-1})$.  Increasing the fixed threshold $C_{\mathrm{osc}}(m,p)$ proves \eqref{eq:integer-lower}.  Finally, the chain rule, \eqref{eq:phase-higher-star}, and Lemma \ref{lem:toroidal-calculus} give directly
\[
  \norm{L^j}{\nabla^m\ubar(t_*)}
  \le C_{m,j}A(\mu+K_*)^mV_j,
\]
which proves \eqref{eq:integer-upper}.
\end{proof}

\begin{proof}[Proof of Proposition \ref{prop:approx-growth}]
\textit{Step 1: the Besov lower bound.}
Let $m=\lfloor s\rfloor+1$, so $m>s$.  Iterating the fundamental theorem of calculus gives
\begin{equation}\label{eq:fd-integral-formula}
  \Delta_{he_z}^mf(x)
  =\int_{[0,h]^m}\partial_z^mf\bigl(x+(t_1+\cdots+t_m)e_z\bigr)
  \dd t_1\cdots\dd t_m.
\end{equation}
Subtracting $h^m\partial_z^mf(x)$ and applying the fundamental theorem once more yields
\begin{equation}\label{eq:fd-expansion}
  \Delta_{he_z}^m\ubar_\theta(t_*)
  =h^m\partial_z^m\ubar_\theta(t_*)+R_h,
  \qquad
  \norm{L^p}{R_h}
  \le C_mh^{m+1}\norm{L^p}{\partial_z^{m+1}\ubar_\theta(t_*)}.
\end{equation}
Take
\[
  h\in[c_0K_*^{-1},2c_0K_*^{-1}],
\]
where $c_0>0$ is fixed and small.  Lemma \ref{lem:integer-derivative-bounds} at orders $m$ and $m+1$ gives
\[
  h^m\norm{L^p}{\partial_z^m\ubar_\theta(t_*)}
  \ge c_mc_0^mAV_p,
  \qquad
  \norm{L^p}{R_h}\le C_m'c_0^{m+1}AV_p.
\]
Choose $c_0$ so that the second term is at most one half of the first.  Then
\begin{equation}\label{eq:fd-lower}
  \norm{L^p}{\Delta_{he_z}^m\ubar_\theta(t_*)}
  \gtrsim AV_p
\end{equation}
throughout this interval of $h$.

Translations in the $z$ direction leave $e_r,e_\theta,e_z$ unchanged.  The cylindrical components of $\Delta_{he_z}^m\ubar$ are therefore pointwise orthogonal, and
\[
  \abs{\Delta_{he_z}^m\ubar}
  \ge\abs{\Delta_{he_z}^m\ubar_\theta}.
\]
Restricting \eqref{eq:directional-besov} to $h\simeq K_*^{-1}$ gives
\[
  \norm{\dot B^s_{p,q}}{\ubar(t_*)}
  \ge c_{\dot B}(s,p,q)K_*^sAV_p.
\]
For $q=\infty$, use any single $h$ in the chosen interval.  For $s>0$, the standard comparison $\norm{\dot B^s_{p,q}}{f}\lesssim_{s,p,q}\norm{B^s_{p,q}}{f}$ follows by controlling the negative dyadic blocks with $\norm{L^p}{f}$.  Consequently,
\[
  \norm{B^s_{p,q}}{\ubar(t_*)}
  \ge c_B(s,p,q)K_*^sAV_p.
\]
This proves the two Besov estimates.  Finally,
\[
  AK_*^sV_p
  =A(\zeta^{-N}\mu)^s\mu^{-2/p}\nu^{-1/p}
  =\zeta^{2-Ns}.
\]

\medskip
\noindent\textit{Step 2: the Sobolev lower bound.}
Let $m=\lfloor s\rfloor+1$ and $n=m+1$.  Since
\[
  \partial_z^m=R_3^m\Lambdaop^m
\]
and Riesz transforms are bounded on $L^p$, Lemma \ref{lem:integer-derivative-bounds} gives
\begin{equation}\label{eq:Wm-lower}
  \norm{\dot W^{m,p}}{\ubar(t_*)}
  \gtrsim\norm{L^p}{\partial_z^m\ubar_\theta(t_*)}
  \gtrsim AK_*^mV_p.
\end{equation}
The upper estimate at order $n$ is
\begin{equation}\label{eq:Wn-upper}
  \norm{\dot W^{n,p}}{\ubar(t_*)}
  \lesssim AK_*^nV_p,
\end{equation}
where the constant is uniform once the prescribed separation threshold is fixed.  Choose $\theta\in(0,1)$ so that
\[
  m=(1-\theta)s+\theta n,
  \qquad
  \theta=\frac{m-s}{n-s}.
\]
By \eqref{eq:sobolev-interpolation-prelim},
\[
  \norm{\dot W^{m,p}}{\ubar}
  \lesssim
  \norm{\dot W^{s,p}}{\ubar}^{1-\theta}
  \norm{\dot W^{n,p}}{\ubar}^{\theta}.
\]
Combining this with \eqref{eq:Wm-lower}--\eqref{eq:Wn-upper} and solving for the order-$s$ norm gives
\[
  \norm{\dot W^{s,p}}{\ubar(t_*)}
  \ge c_{\dot W}(s,p)AK_*^sV_p.
\]
The zero-order multiplier $\Lambdaop^sJ^{-s}$ is bounded on $L^p$, so $\norm{\dot W^{s,p}}{f}\lesssim_{s,p}\norm{W^{s,p}}{f}$.  Hence
\[
  \norm{W^{s,p}}{\ubar(t_*)}
  \ge c_W(s,p)AK_*^sV_p.
\]
This proves the two Sobolev estimates.
\end{proof}

\section{Stability of the approximation}\label{sec:stability}

Fix the target exponents from Theorem \ref{thm:main}.  Choose an integer
\begin{equation}\label{eq:Kb-choice}
  K_{\mathrm b}\ge \max\left\{6,\left\lceil s+d_p\right\rceil+6\right\},
\end{equation}
and set
\begin{equation}\label{eq:gamma-kappa-choice}
  \gamma_k:=1-\frac{k}{4(K_{\mathrm b}+1)},
  \qquad 0\le k\le K_{\mathrm b},
  \qquad
  \gamma_*:=\gamma_{K_{\mathrm b}},
  \qquad
  \kappa:=\frac1{8(K_{\mathrm b}+1)}.
\end{equation}
Then $\gamma_*>3/4$.  Fix $\ell>\max\{p,2\}$ and $\eta>0$ so that
\begin{equation}\label{eq:ell-eta-choice}
  \frac1\ell<\frac\kappa4,
  \qquad
  0<\eta<\frac{b\kappa}{4}.
\end{equation}
Finally, define
\begin{equation}\label{eq:beta-p}
  \beta_p:=\gamma_*-\left(\frac12-\frac1p\right)>0.
\end{equation}

Let $u$ be the smooth solution issued from $u_0$ on its maximal interval and set
\begin{equation}\label{eq:w-def}
  w:=u-\ubar.
\end{equation}
Subtracting \eqref{eq:approx-equation} from \eqref{eq:fns} gives
\begin{equation}\label{eq:error-equation}
\begin{cases}
  \partial_tw+(-\Delta)^\alpha w
  +u\cdot\nabla w+w\cdot\nabla\ubar+\nabla\pi_w=-E,\\
  \divg w=0,\\
  w|_{t=0}=0.
\end{cases}
\end{equation}

\begin{proposition}[Stability of the approximation]\label{prop:stability}
For each fixed sufficiently small $\zeta$, there exists $\mu_{\mathrm{stab}}(\zeta)$ such that, for $\mu\ge\mu_{\mathrm{stab}}(\zeta)$, the exact solution exists smoothly on $[0,t_*]$ and
\begin{align}
  \norm{W^{s,p}}{u(t_*)-\ubar(t_*)}
  &\lesssim_\zeta S_{s,p}\vartheta^{\beta_p},
  \label{eq:W-target-error}\\
  \norm{B^s_{p,q}}{u(t_*)-\ubar(t_*)}
  &\lesssim_{\zeta,s,p,q,\eta}
  S_{s,p}\mu^\eta\vartheta^{\beta_p}
  \label{eq:B-target-error}
\end{align}
for every fixed $1\le q\le\infty$; the implicit constant in the Besov estimate may depend on that fixed $q$.  In particular, each right-hand side is $o_{\mu\to\infty}(S_{s,p})$ with $\zeta$ fixed.
\end{proposition}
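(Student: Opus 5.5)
We will run a bootstrap on $[0,T]\cap[0,t_*]$, where $T$ is maximal subject to the Lipschitz hypothesis
\[
  \int_0^{T}\norm{L^\infty}{\nabla u(t)}\dd t\le 2C_{\mathrm{app}}(\zeta)\,\zeta^{-N}.
\]
This hypothesis is consistent with \eqref{eq:approx-lipschitz}, because $t_*L=\zeta^{-N}$. By the continuation criterion in Proposition \ref{prop:local-theory}, the solution stays smooth while this bound holds. So it suffices to close the bootstrap with the constant $2C_{\mathrm{app}}$ replaced by something strictly smaller.

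\textbf{Step 1: endpoint bounds for $u$.}
Under the bootstrap hypothesis, we first propagate high-order finite-exponent norms of the exact solution. For integer $k\le K_{\mathrm b}+2$ and the auxiliary exponents $2$ and $\ell$, we estimate $\norm{W^{k,j}}{u(t)}$ by a commutator/Moser argument of Kato--Ponce type. The fractional term is coercive in $L^j$, since $\int\abs f^{j-2}f\,\Lambdaop^{2\alpha}f\ge0$, so it can simply be dropped. The pressure is handled through the projected form $\PP(u\cdot\nabla u)$, with the commutator $[\nabla^k\PP,u\cdot\nabla]$ bounded by $\norm{L^\infty}{\nabla u}\norm{W^{k,j}}{u}$. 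This is the projected commutator estimate announced in the outline. Grönwall and the bootstrap then give
\[
  \norm{W^{k,j}}{u(t)}\le e^{C\zeta^{-N}}\norm{W^{k,j}}{u_0}\lesssim_\zeta S_{k,j}.
\]

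\textbf{Step 2: anisotropic $H^k$ induction for $w$.}
We take $L^2$ energy estimates of $\nabla^kw$ in \eqref{eq:error-equation}, dropping the dissipation by positivity. The transport term $u\cdot\nabla w$ costs only $\norm{L^\infty}{\nabla u}$ plus commutator terms, and the stretching term costs $\norm{L^\infty}{\nabla\ubar}\lesssim L$. The forcing is controlled by Proposition \ref{prop:approximation-package}(iv): $\norm{H^k}{E}\lesssim S_{k,2}A\nu$. Integrating over a time $t_*$ gives a contribution of size
\[
  S_{k,2}\,t_*A\nu=\zeta^{-N}S_{k,2}\,\vartheta .
\]
This is already a full gain $\vartheta$ over the profile scale at every order. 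The difficulty is that the commutators $\sum\nabla^iu\,\nabla^{k+1-i}w$ involve high norms of $u$ (or $\ubar$) against low norms of $w$, each with size $\mu^i$. We therefore induct in $k$ with the weakened exponents $\gamma_k$: the claim is $\norm{H^k}{w}\lesssim_\zeta S_{k,2}\vartheta^{\gamma_k}$. At each step the loss of $\vartheta^{1/(4(K_{\mathrm b}+1))}$ absorbs the Grönwall factor $e^{C\zeta^{-N}}$ and the mixed terms. The latter are interpolated between the inductive bound at lower order and the Step 1 endpoint bound, and the auxiliary exponent $\ell$ with $1/\ell<\kappa/4$ is used when placing factors in $L^\infty$ via Sobolev embedding.

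\textbf{Step 3: closing the bootstrap and target norms.}
Gagliardo--Nirenberg, with $K_{\mathrm b}\ge 6$, gives
\[
  \norm{L^\infty}{\nabla w}\lesssim \norm{H^2}{w}^{1/4}\norm{H^{3}}{w}^{3/4}\cdot(\text{volume factor}),
\]
which is $\lesssim L\,\vartheta^{\gamma_*}\times\mu^{O(\kappa)}$-type factors and hence $o(L)$. This improves the bootstrap hypothesis, so $T=t_*$.

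For the target norms, \eqref{eq:sobolev-transfer-prelim} and interpolation of $H^{s+d_p}$ between $L^2$ and $H^{K_{\mathrm b}}$ give
\[
  \norm{W^{s,p}}{w}\lesssim S_{s+d_p,2}\,\vartheta^{\gamma_*}.
\]
The remaining task is to compare $S_{s+d_p,2}$ with $S_{s,p}$. Their ratio is $\mu^{d_p}V_2/V_p=(\mu/\nu)^{1/2-1/p}=\vartheta^{-(1/2-1/p)}$, which yields exactly $\vartheta^{\beta_p}$ and proves \eqref{eq:W-target-error}. The Besov estimate \eqref{eq:B-target-error} follows in the same way from \eqref{eq:besov-transfer-prelim}, at the cost of the extra factor $\mu^\eta$. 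Since $\eta<b\kappa/4$ and $\beta_p>1/4$, the product $\mu^\eta\vartheta^{\beta_p}$ still tends to $0$.

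\textbf{Main obstacle.}
The hard step is Step 2. There the Grönwall factor $e^{C\zeta^{-N}}$ is large but independent of $\mu$, and the top-order commutators must be controlled without losing powers of $\mu$. I would first try to pair the highest derivative always with $u$ (using Step 1) and the lowest with $w$, and absorb every leftover $\mu^{\kappa}$ loss into the decrement of $\gamma_k$.
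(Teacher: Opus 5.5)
Your architecture is the paper's: a Lipschitz bootstrap (your time-integrated version works equally well), propagation of $W^{\sigma,\ell}$ norms of the exact solution via the projected commutator and the $L^j$-positivity of $\Lambdaop^{2\alpha}$, endpoint bounds with a $\vartheta^{-\kappa}$ loss, the $\gamma_k$-induction, and the $L^2\to L^p$ transfer producing $\vartheta^{\beta_p}$. One small correction to Step 2: the decrement $\gamma_{k-1}-\gamma_k=2\kappa$ pays for the endpoint loss $\vartheta^{-\kappa}$. The factor $e^{C\zeta^{-N}}$ is $\mu$-independent and simply goes into $C_\zeta$.

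The step that fails as written is the closure of the bootstrap. Because of the anisotropy, converting the bounds $\norm{L^2}{\nabla^kw}\lesssim_\zeta A\mu^{k-1}\nu^{-1/2}\vartheta^{\gamma_k}$ into $L^\infty$ is not nearly free. At the scale-critical order $5/2$ one already gets $A\mu^{3/2}\nu^{-1/2}=L\vartheta^{-1/2}$. Each further $\epsilon$ of a derivative then costs a full factor $\mu^{\epsilon}$, whereas the only gain is $\vartheta^{\gamma_*}=\mu^{-b\gamma_*}$ with $b=\delta/100<1/40$. Your interpolation $\norm{H^2}{w}^{1/4}\norm{H^3}{w}^{3/4}$ sits at order $11/4$, a quarter derivative above critical. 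It therefore gives only $\norm{L^\infty}{\nabla w}\lesssim_\zeta L\mu^{1/4}\vartheta^{\gamma_*-1/2}=L\mu^{1/4-b(\gamma_*-1/2)}$, which is not $o(L)$. The bookkeeping ``$L\vartheta^{\gamma_*}\mu^{O(\kappa)}$'' is not what the computation yields.

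The fix is to use a scale-sharp embedding. Either use Agmon's inequality $\norm{L^\infty}{\nabla w}\lesssim\norm{L^2}{\nabla^2w}^{1/2}\norm{L^2}{\nabla^3w}^{1/2}$, which gives $L\vartheta^{\gamma_*-1/2}$. Or, as the paper does, use $H^{5/2+\eta}\subset W^{1,\infty}$ with $\eta<b\kappa/4$, which gives $L\mu^{\eta}\vartheta^{\gamma_*-1/2}$. Either bound is $o(L)$ precisely because $\gamma_*>1/2$; this is where the design $\gamma_*>3/4$ is used besides the positivity of $\beta_p$.

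The same care is needed in your Step 1 when you place $\nabla^mu$ in $L^\infty$. You must use order $m+3/\ell+\eta$, or Gagliardo--Nirenberg at $L^\ell$, which costs only $\mu^\eta\vartheta^{-1/\ell}\le\vartheta^{-\kappa}$. The integer embedding $W^{m+1,\ell}\subset W^{m,\infty}$ loses almost a full power of $\mu$ and would break the induction.
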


We prove the proposition through a Lipschitz bootstrap.  Let $C_{\mathrm{app}}(\zeta)$ be the constant in \eqref{eq:approx-lipschitz}.  Define $T_{\mathrm{boot}}$ to be the supremum of all $T\le\min\{t_*,T_{\max}\}$ such that
\begin{equation}\label{eq:lipschitz-bootstrap}
  \norm{L^\infty}{\nabla u(t)}
  \le2C_{\mathrm{app}}(\zeta)L,
  \qquad 0\le t\le T.
\end{equation}
By continuity and the initial profile bounds, $T_{\mathrm{boot}}>0$ after increasing $C_{\mathrm{app}}(\zeta)$ if necessary.

\subsection{Bootstrap and finite-exponent estimates}

For a divergence-free vector field $v$, define
\begin{equation}\label{eq:projected-commutator}
  \mathcal C_\sigma(v)
  :=\PP J^\sigma\nabla\cdot(v\otimes v)-v\cdot\nabla J^\sigma v.
\end{equation}

\begin{lemma}[Projected commutator]\label{lem:projected-commutator}
Let $1<j<\infty$ and $\sigma\ge1$.  For every smooth divergence-free $v$,
\begin{equation}\label{eq:projected-commutator-bound}
  \norm{L^j}{\mathcal C_\sigma(v)}
  \le C_{\sigma,j}\norm{L^\infty}{\nabla v}
  \norm{W^{\sigma,j}}{v}.
\end{equation}
\end{lemma}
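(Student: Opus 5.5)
We prove \eqref{eq:projected-commutator-bound} by using $\divg v=0$ to put the transport term in projected form, and then reducing the projected commutator to a classical Kato--Ponce commutator plus one term that we handle separately.

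\emph{Rewriting.} Since $\divg v=0$, we have $\nabla\cdot(v\otimes v)=v\cdot\nabla v$. Moreover $J^\sigma$ commutes with $\PP$ and with derivatives, so $J^\sigma v$ is also divergence free. Hence
\[
  \mathcal C_\sigma(v)
  =\PP\bigl(J^\sigma(v\cdot\nabla v)-v\cdot\nabla J^\sigma v\bigr)
  -(I-\PP)\bigl(v\cdot\nabla J^\sigma v\bigr).
\]
The first piece is controlled by $L^j$-boundedness of $\PP$ together with the Kato--Ponce commutator estimate for the inhomogeneous multiplier $J^\sigma$:
\[
  \norm{L^j}{[J^\sigma,v\cdot\nabla]v}
  \lesssim\norm{L^\infty}{\nabla v}\norm{W^{\sigma,j}}{v}
  +\norm{W^{\sigma,j}}{v}\norm{L^\infty}{\nabla v}.
\]
In this estimate the derivative falling on the second factor is absorbed using $\norm{W^{\sigma-1,j}}{\nabla v}\lesssim\norm{W^{\sigma,j}}{v}$. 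We would cite Kato--Ponce, or Kenig--Ponce--Vega, in the form
\[
  \norm{L^j}{[J^\sigma,f]g}\lesssim\norm{L^\infty}{\nabla f}\norm{W^{\sigma-1,j}}{g}+\norm{W^{\sigma,j}}{f}\norm{L^\infty}{g}
\]
with $f=v_i$ and $g=\partial_iv$. This form is valid for $1<j<\infty$ and $\sigma\ge1$.

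\emph{Gradient part (main obstacle).} The remaining term is $(I-\PP)(v\cdot\nabla J^\sigma v)=\nabla\Delta^{-1}\divg(v\cdot\nabla J^\sigma v)$. A naive bound gives $\norm{L^\infty}{v}\norm{W^{\sigma+1,j}}{v}$, which loses a derivative. To avoid this we use the divergence-free structure twice:
\[
  \divg(v\cdot\nabla J^\sigma v)=\partial_iv_k\,\partial_kJ^\sigma v_i
  =\partial_k\bigl(\partial_iv_k\,J^\sigma v_i\bigr),
\]
where the second equality uses $\partial_k\partial_iv_k=\partial_i\divg v=0$. Therefore
\[
  (I-\PP)(v\cdot\nabla J^\sigma v)=\nabla\Delta^{-1}\partial_i\partial_k\bigl(\partial_iv_k\,J^\sigma v_i\bigr),
\]
which is a zero-order (double Riesz) operator applied to $\partial_iv_k\,J^\sigma v_i$. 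Since double Riesz transforms are bounded on $L^j$, its $L^j$ norm is at most
\[
  C\norm{L^\infty}{\nabla v}\norm{L^j}{J^\sigma v}\simeq\norm{L^\infty}{\nabla v}\norm{W^{\sigma,j}}{v}.
\]
The one point needing care is justifying the operator identity $\nabla\Delta^{-1}\partial_k$ on the relevant function classes. For smooth $v$ with $v\in W^{\sigma,j}$ the product lies in $L^j$, and the identity holds in the sense of Fourier multipliers (first for Schwartz functions, then by density), so this step goes through.

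Combining the two pieces gives \eqref{eq:projected-commutator-bound} with a constant depending only on $\sigma$ and $j$.
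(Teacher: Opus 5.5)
Your argument is correct, and your splitting coincides with the paper's. Since $\PP J^\sigma v=J^\sigma v$, your term $-(I-\PP)(v\cdot\nabla J^\sigma v)$ is exactly the paper's $[\PP,v\cdot\nabla]J^\sigma v$. The first piece $\PP[J^\sigma,v\cdot\nabla]v$ is handled by the same Kato--Ponce estimate in both.

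The genuine difference is how you bound the projection piece.
\begin{itemize}
\item The paper writes it componentwise as $[R_aR_b,v_c]\partial_cF_b$ with $F=J^\sigma v$ and invokes Calder\'on's first commutator theorem. That step works for any Lipschitz $v$ and any $F\in L^j$, with no divergence structure needed, but it relies on a deep singular-integral result.
\item You use that both $v$ and $F$ are divergence free to get $\divg(v\cdot\nabla F)=\partial_k(\partial_iv_k\,F_i)$. The term then becomes $\nabla\Delta^{-1}\partial_k$, a double Riesz transform, applied to $\partial_iv_k\,F_i$, which H\"older bounds in $L^j$ by $\norm{L^\infty}{\nabla v}\norm{L^j}{F}$.
\end{itemize}
Your route is more elementary: it needs only $L^j$-boundedness of Riesz transforms. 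The price is that it is tied to the divergence-free structure, which is available here.

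One slip to fix: in the display for $(I-\PP)(v\cdot\nabla J^\sigma v)$ you wrote $\nabla\Delta^{-1}\partial_i\partial_k(\partial_iv_k\,J^\sigma v_i)$. The extra $\partial_i$ should not be there; as written the operator has order one and the index $i$ appears three times. The correct expression $\nabla\Delta^{-1}\partial_k(\partial_iv_k\,J^\sigma v_i)$ follows from your preceding line, and it is what the rest of your argument actually uses.
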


\begin{proof}
Since $v$ is divergence free and $J^\sigma$ commutes with the Leray projector,
\begin{equation}\label{eq:commutator-splitting}
  \mathcal C_\sigma(v)
  =\PP[J^\sigma,v\cdot\nabla]v
  +[\PP,v\cdot\nabla]J^\sigma v.
\end{equation}
The Kato--Ponce commutator estimate \cite{KatoPonce1988} gives
\begin{equation}\label{eq:KP-part}
  \norm{L^j}{[J^\sigma,v\cdot\nabla]v}
  \lesssim_{\sigma,j}
  \norm{L^\infty}{\nabla v}\norm{W^{\sigma,j}}{v}.
\end{equation}

For the second term, write $\PP_{ab}=\delta_{ab}+R_aR_b$ and set $F=J^\sigma v$.  Componentwise, $[\PP,v\cdot\nabla]F$ is a finite sum of terms of the form
\[
  [R_aR_b,v_c]\partial_cF_b.
\]
The first Calder\'on commutator estimate \cite{Calderon1965,CoifmanMeyer1978} therefore yields
\begin{equation}\label{eq:Calderon-part}
  \norm{L^j}{[R_aR_b,v_c]\partial_cF_b}
  \lesssim_j
  \norm{L^\infty}{\nabla v}\norm{L^j}{F}.
\end{equation}
Combining \eqref{eq:commutator-splitting}--\eqref{eq:Calderon-part} with the $L^j$ boundedness of $\PP$ proves \eqref{eq:projected-commutator-bound}.
\end{proof}

\begin{proposition}[Finite-$L^j$ propagation]\label{prop:finite-ell}
Assume that a smooth solution $u$ of \eqref{eq:fns} satisfies
\begin{equation}\label{eq:Lipschitz-assumption-general}
  \norm{L^\infty}{\nabla u(t)}\le M,
  \qquad 0\le t\le T.
\end{equation}
Then, for every $1<j<\infty$ and every real $\sigma\ge1$,
\begin{equation}\label{eq:finite-ell-propagation}
  \norm{W^{\sigma,j}}{u(t)}
  \le C_{\sigma,j}\norm{W^{\sigma,j}}{u_0}
  \exp(C_{\sigma,j}Mt),
  \qquad 0\le t\le T.
\end{equation}
Consequently, under the bootstrap \eqref{eq:lipschitz-bootstrap}, for every fixed real $\sigma\ge1$,
\begin{equation}\label{eq:finite-ell-scale}
  \norm{W^{\sigma,\ell}}{u(t)}
  +\norm{W^{\sigma,\ell}}{\ubar(t)}
  \lesssim_{\zeta,\sigma,\ell}S_{\sigma,\ell},
  \qquad 0\le t\le T_{\mathrm{boot}}.
\end{equation}
\end{proposition}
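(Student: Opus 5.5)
We will prove \eqref{eq:finite-ell-propagation} by an $L^j$ energy estimate for $F:=J^\sigma u$, using the projected commutator of Lemma \ref{lem:projected-commutator}. Write the equation in projected form,
\[
  \partial_tu+\PP\nabla\cdot(u\otimes u)+\Lambdaop^{2\alpha}u=0 ,
\]
and apply $J^\sigma$, which commutes with $\PP$ and with $\Lambdaop^{2\alpha}$. By the definition \eqref{eq:projected-commutator} we get
\[
  \partial_tF+u\cdot\nabla F+\Lambdaop^{2\alpha}F=-\mathcal C_\sigma(u).
\]

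We first treat $2\le j<\infty$. Pair the equation with $\abs{F}^{j-2}F$ and integrate over $\R^3$.
\begin{itemize}
\item The transport term vanishes because $\divg u=0$.
\item The fractional dissipation term is nonnegative, by the Córdoba--Córdoba type positivity $\int\abs{F}^{j-2}F\cdot\Lambdaop^{2\alpha}F\ge0$, valid for $0<\alpha\le1$. For vector fields it follows from the kernel representation of $\Lambdaop^{2\alpha}$ and the elementary inequality $(a-b)\cdot(\abs a^{j-2}a-\abs b^{j-2}b)\ge0$.
\end{itemize}
Hölder's inequality and Lemma \ref{lem:projected-commutator} then give
\[
  \frac{\dd}{\dd t}\norm{L^j}{F}\le\norm{L^j}{\mathcal C_\sigma(u)}\le C_{\sigma,j}M\norm{W^{\sigma,j}}{u}.
\]
Since $\norm{W^{\sigma,j}}{u}\simeq\norm{L^j}{F}$, Grönwall's inequality yields \eqref{eq:finite-ell-propagation}. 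The equivalence constant accounts for the prefactor $C_{\sigma,j}$.

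The smoothness of $u$ makes all these manipulations legitimate. The solution lies in $H^m$ for all $m$ on $[0,T]$, so $F$ is in every $L^j$ and decays. The map $t\mapsto\norm{L^j}{F}^j$ is $C^1$, and we differentiate $\norm{L^j}{F}^j$ rather than the norm itself, to avoid trouble at the zero set.

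For $1<j<2$ the weight $\abs{F}^{j-2}F$ is singular where $F=0$. We plan to handle this by regularization. Replace the weight by $(\abs F^2+\epsilon^2)^{(j-2)/2}F$, which is still a monotone gradient map, so the dissipation remains nonnegative by the same kernel argument. Then let $\epsilon\to0$. Alternatively, we may note that only $j=\ell>2$ is used later, so this case is not needed for the bootstrap. We expect this positivity/regularization step to be the only genuine subtlety: the nonlocal operator must not spoil the $L^j$ estimate. The commutator, which is the main analytic input, is already supplied by Lemma \ref{lem:projected-commutator}.

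For the consequence \eqref{eq:finite-ell-scale}, we apply \eqref{eq:finite-ell-propagation} with $j=\ell$ and $M=2C_{\mathrm{app}}(\zeta)L$ on $[0,T_{\mathrm{boot}}]$. Two ingredients are needed.
\begin{itemize}
\item Since $t\le t_*=\zeta^{-N}L^{-1}$, we have $Mt\le2C_{\mathrm{app}}(\zeta)\zeta^{-N}$. This is a constant depending only on $\zeta$, so the exponential factor is $\lesssim_\zeta1$.
\item Lemma \ref{lem:initial-smallness} gives $\norm{W^{\sigma,\ell}}{u_0}\lesssim S_{\sigma,\ell}$.
\end{itemize}
Together these give the bound for $u$. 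The bound for $\ubar$ is Lemma \ref{lem:approx-profile-bounds}.
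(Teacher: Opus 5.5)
Your proposal is correct and follows the paper's proof essentially step for step. It derives the projected equation for $F=J^\sigma u$, tests against $\abs F^{j-2}F$ using the kernel-based positivity of $\Lambdaop^{2\alpha}$, and handles $1<j<2$ with the regularized monotone map $(\abs F^2+\tau^2)^{(j-2)/2}F$. It then closes with Lemma \ref{lem:projected-commutator} and Gr\"onwall, and obtains the scaled consequence from $Mt_*\lesssim_\zeta\zeta^{-N}$ together with Lemmas \ref{lem:initial-smallness} and \ref{lem:approx-profile-bounds}.
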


\begin{proof}
Write the exact equation in projected form and set $F:=J^\sigma u$:
\begin{equation}\label{eq:F-equation}
  \partial_tF+u\cdot\nabla F+\Lambdaop^{2\alpha}F
  =-\mathcal C_\sigma(u).
\end{equation}
For $j\ge2$, test \eqref{eq:F-equation} against
\[
  \Psi_j(F):=\abs F^{j-2}F.
\]
The transport term vanishes because
\[
  \int_{\R^3}u\cdot\nabla F\cdot\Psi_j(F)\dd x
  =\frac1j\int_{\R^3}u\cdot\nabla\abs F^j\dd x=0.
\]
The fractional dissipative term is nonnegative.  Indeed, the singular-integral representation of $\Lambdaop^{2\alpha}$ gives
\begin{equation}\label{eq:vector-dissipation-positivity}
\begin{aligned}
  &\int_{\R^3}\Psi_j(F(x))\cdot\Lambdaop^{2\alpha}F(x)\dd x\\
  &\quad=\frac{c_{\alpha}}2
  \iint_{\R^3\times\R^3}
  \frac{(F(x)-F(y))\cdot
  \bigl(\Psi_j(F(x))-\Psi_j(F(y))\bigr)}
  {\abs{x-y}^{3+2\alpha}}\dd x\dd y
  \ge0,
\end{aligned}
\end{equation}
because $z\mapsto\abs z^{j-2}z$ is monotone.

For $1<j<2$, introduce the regularized convex primitive
\begin{equation}\label{eq:regularized-convex-primitive}
\begin{aligned}
  \Phi_{j,\tau}(z)
  &:=\int_0^{\abs z}r(r^2+\tau^2)^{(j-2)/2}\dd r\\
  &=\frac{(\abs z^2+\tau^2)^{j/2}-\tau^j}{j},
  \qquad \tau>0,
\end{aligned}
\end{equation}
and its gradient
\begin{equation}\label{eq:regularized-duality-map}
  \Psi_{j,\tau}(z)
  :=\nabla_z\Phi_{j,\tau}(z)
  =(\abs z^2+\tau^2)^{(j-2)/2}z.
\end{equation}
The tangential eigenvalues of $D\Psi_{j,\tau}(z)$ are $(\abs z^2+\tau^2)^{(j-2)/2}$, while the radial eigenvalue is
\[
  (\abs z^2+\tau^2)^{(j-4)/2}
  \bigl(\tau^2+(j-1)\abs z^2\bigr).
\]
They are positive, so $\Psi_{j,\tau}$ is monotone.  Testing \eqref{eq:F-equation} against $\Psi_{j,\tau}(F)$ gives
\begin{equation}\label{eq:regularized-Lj-energy}
\begin{aligned}
  \frac{\dd}{\dd t}\int_{\R^3}\Phi_{j,\tau}(F)\dd x
  &+\int_{\R^3}\Psi_{j,\tau}(F)\cdot
  \Lambdaop^{2\alpha}F\dd x\\
  &=-\int_{\R^3}\mathcal C_\sigma(u)\cdot
  \Psi_{j,\tau}(F)\dd x.
\end{aligned}
\end{equation}
The transport contribution is $\int u\cdot\nabla\Phi_{j,\tau}(F)\dd x=0$, and the dissipative contribution is nonnegative by the same double-integral formula and the monotonicity of $\Psi_{j,\tau}$.  Since $j-2<0$,
\begin{equation}\label{eq:regularized-dual-bound}
  \abs{\Psi_{j,\tau}(F)}\le\abs F^{j-1}.
\end{equation}
After integrating \eqref{eq:regularized-Lj-energy} over $[0,t]$ and discarding the nonnegative dissipative term, we obtain
\begin{align*}
  \int_{\R^3}\Phi_{j,\tau}(F(t))\dd x
  &\le\int_{\R^3}\Phi_{j,\tau}(F(0))\dd x\\
  &\quad+\int_0^t
  \norm{L^j}{\mathcal C_\sigma(u)(s)}
  \norm{L^j}{F(s)}^{j-1}\dd s.
\end{align*}
As $\tau\downarrow0$, one has
\[
  \Phi_{j,\tau}(z)\uparrow\frac1j\abs z^j.
\]
Monotone convergence therefore yields
\begin{equation}\label{eq:Lj-integrated-inequality}
  \frac1j\norm{L^j}{F(t)}^j
  \le\frac1j\norm{L^j}{F(0)}^j
  +\int_0^t
  \norm{L^j}{\mathcal C_\sigma(u)(s)}
  \norm{L^j}{F(s)}^{j-1}\dd s.
\end{equation}
Applying the standard scalar regularization
$(\norm{L^j}{F}^j+\varepsilon)^{1/j}$ and then letting $\varepsilon\downarrow0$ gives, for almost every $t$,
\begin{equation}\label{eq:Lj-differential-inequality}
  \frac{\dd}{\dd t}\norm{L^j}{F}
  \le\norm{L^j}{\mathcal C_\sigma(u)}.
\end{equation}
For $j\ge2$, the same differential inequality follows directly from \eqref{eq:vector-dissipation-positivity}.  Lemma \ref{lem:projected-commutator} now gives
\[
  \frac{\dd}{\dd t}\norm{L^j}{F}
  \le C_{\sigma,j}\norm{L^\infty}{\nabla u}\norm{L^j}{F}.
\]
Gronwall's inequality proves \eqref{eq:finite-ell-propagation}.

Under \eqref{eq:lipschitz-bootstrap}, one has $Mt_*\le C_\zeta Lt_*=C_\zeta\zeta^{-N}$, which is independent of $\mu$ and $\nu$.  Lemma \ref{lem:initial-smallness} gives the initial $W^{\sigma,\ell}$ scale, and Lemma \ref{lem:approx-profile-bounds} gives the corresponding bound for $\ubar$.  This proves \eqref{eq:finite-ell-scale}.
\end{proof}

\subsection{Endpoint bounds}

\begin{corollary}[Endpoint bounds with anisotropic loss]\label{cor:endpoint-bounds}
Under \eqref{eq:lipschitz-bootstrap}, for every integer $1\le m\le K_{\mathrm b}+1$,
\begin{equation}\label{eq:endpoint-bounds}
  \norm{L^\infty}{\nabla^m u(t)}
  +\norm{L^\infty}{\nabla^m\ubar(t)}
  \lesssim_{\zeta,m}A\mu^m\vartheta^{-\kappa},
  \qquad 0\le t\le T_{\mathrm{boot}}.
\end{equation}
\end{corollary}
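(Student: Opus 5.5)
The plan is to obtain the $L^\infty$ bounds from the finite-exponent propagation of Proposition \ref{prop:finite-ell} by Sobolev embedding at the large exponent $\ell$ fixed in \eqref{eq:ell-eta-choice}. The loss of the volume factor $V_\ell^{-1}$ produced by the embedding will be absorbed into $\vartheta^{-\kappa}$.

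For the exact solution, fix an integer $1\le m\le K_{\mathrm b}+1$ and set $\sigma:=m+\frac{3}{\ell}+\epsilon_0$ for a small fixed $\epsilon_0>0$. Then $\sigma\ge1$, and the Sobolev embedding $W^{\sigma-m,\ell}\hookrightarrow L^\infty$ holds because $\sigma-m>3/\ell$. This gives
\[
  \norm{L^\infty}{\nabla^m u(t)}\lesssim\norm{W^{\sigma,\ell}}{u(t)}.
\]
By \eqref{eq:finite-ell-scale}, valid on $[0,T_{\mathrm{boot}}]$, the right side is
\[
  \lesssim_\zeta S_{\sigma,\ell}=A\mu^{m}\,\mu^{3/\ell+\epsilon_0}V_\ell .
\]
The same bound holds for $\ubar$, either by \eqref{eq:finite-ell-scale} or, more simply, by the pointwise chain-rule estimate of Lemma \ref{lem:toroidal-calculus} combined with Lemma \ref{lem:phase-upper-prelim}. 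That pointwise route gives $\norm{L^\infty}{\nabla^m\ubar}\lesssim_\zeta A\mu^m$ with no loss at all.

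The remaining step is bookkeeping of the loss factor. Since $V_\ell=\mu^{-2/\ell}\nu^{-1/\ell}$ and $\nu=\mu^{1-b}\le\mu$, we have
\[
  \mu^{3/\ell+\epsilon_0}V_\ell=\mu^{1/\ell+\epsilon_0}\nu^{-1/\ell}=\mu^{\epsilon_0}\vartheta^{-1/\ell},
\]
and $\vartheta^{-1/\ell}\le\vartheta^{-\kappa/4}$ by \eqref{eq:ell-eta-choice}. The factor $\mu^{\epsilon_0}$ has to be written as a power of $\vartheta^{-1}=\mu^{b}$. Choosing $\epsilon_0:=b\kappa/2$ gives $\mu^{\epsilon_0}=\vartheta^{-\kappa/2}$, so the total loss is at most $\vartheta^{-3\kappa/4}\le\vartheta^{-\kappa}$. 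This proves \eqref{eq:endpoint-bounds}.

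The embedding constant depends on $\sigma$, hence on $m$, $\ell$, $b$, $\kappa$, and these are all fixed before $\mu$. Since $m$ ranges over finitely many values, the constants remain $\mu$-independent.

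I expect the main obstacle to be exactly this exponent bookkeeping: the fractional Sobolev embedding at $\ell<\infty$ costs a small amount of regularity, which appears as a positive power of $\mu$ rather than of the aspect ratio. The margin in \eqref{eq:ell-eta-choice} is what lets both the volume loss $\vartheta^{-1/\ell}$ and the $\epsilon_0$-regularity loss fit within $\vartheta^{-\kappa}$. One should also check that Proposition \ref{prop:finite-ell} is applied with non-integer $\sigma$; this is allowed, since it is stated for real $\sigma\ge1$.
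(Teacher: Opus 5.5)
Your proposal is correct and is essentially the paper's argument: Sobolev embedding at the fixed exponent $\ell$, the propagated bound \eqref{eq:finite-ell-scale} at order $m+3/\ell$ plus a small margin, and the bookkeeping $\mu^{3/\ell}V_\ell=\vartheta^{-1/\ell}$. The only difference is cosmetic. The paper uses the already-fixed $\eta$ from \eqref{eq:ell-eta-choice} as the margin, getting $\mu^{\eta+b/\ell}\le\mu^{b\kappa}=\vartheta^{-\kappa}$, whereas you introduce $\epsilon_0=b\kappa/2$; both choices work.
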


\begin{proof}
Sobolev embedding at the fixed finite exponent $\ell$ and Proposition \ref{prop:finite-ell} give
\begin{align*}
  \norm{L^\infty}{\nabla^m u}
  &\lesssim\norm{W^{m+3/\ell+\eta,\ell}}{u}
  \lesssimz S_{m+3/\ell+\eta,\ell}\\
  &=A\mu^m\mu^{\eta+1/\ell}\nu^{-1/\ell}
  =A\mu^m\mu^\eta\vartheta^{-1/\ell}.
\end{align*}
By \eqref{eq:ell-eta-choice},
\[
  \mu^\eta\vartheta^{-1/\ell}
  =\mu^{\eta+b/\ell}
  \le\mu^{b\kappa}
  =\vartheta^{-\kappa}
\]
for $\mu\ge1$.  The same argument applies to $\ubar$, or one may use Lemma \ref{lem:approx-profile-bounds} directly.
\end{proof}

\subsection{Higher-order energy estimates}

Set
\begin{equation}\label{eq:Bk-def}
  B_k:=S_{k,2}=A\mu^{k-1}\nu^{-1/2},
  \qquad 0\le k\le K_{\mathrm b}.
\end{equation}
By Proposition \ref{prop:approximation-package},
\begin{equation}\label{eq:residual-L2-scale}
  \norm{L^2}{\nabla^kE(t)}
  \lesssim_{\zeta,k}B_kA\nu
  =B_kL\vartheta,
  \qquad 0\le k\le K_{\mathrm b}.
\end{equation}

\begin{proposition}[Anisotropic $H^k$ stability]\label{prop:Hk-stability}
For every integer $0\le k\le K_{\mathrm b}$ and every $0\le t\le T_{\mathrm{boot}}$,
\begin{equation}\label{eq:Hk-stability}
  \norm{L^2}{\nabla^kw(t)}
  \le C_{\zeta,k}B_k\vartheta^{\gamma_k}.
\end{equation}
\end{proposition}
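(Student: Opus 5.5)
Proof is by induction on $k$, with an $L^2$ energy estimate for $\nabla^k w$ at each step. The bootstrap \eqref{eq:lipschitz-bootstrap} supplies $\norm{L^\infty}{\nabla u}\lesssim_\zeta L$, so Gronwall on $[0,t_*]$ costs only $e^{CLt_*}=e^{C\zeta^{-N}}$, a $\zeta$-dependent constant. The right-hand side is then integrated in time, using $t_*L=\zeta^{-N}$.

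\emph{Base case $k=0$.} Test \eqref{eq:error-equation} against $w$. The transport term $u\cdot\nabla w$ and the pressure vanish, and the dissipation is nonnegative. This gives
\[
  \tfrac{\dd}{\dd t}\norm{L^2}{w}\le\norm{L^\infty}{\nabla\ubar}\norm{L^2}{w}+\norm{L^2}{E}.
\]
By \eqref{eq:approx-lipschitz} and \eqref{eq:residual-L2-scale}, Gronwall yields
\[
  \norm{L^2}{w(t)}\lesssim_\zeta t_*B_0L\vartheta\lesssim_\zeta B_0\vartheta,
\]
which is stronger than $\vartheta^{\gamma_0}=\vartheta$ is required to be.

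\emph{Inductive step.} Apply $\nabla^k$ and test against $\nabla^kw$. The terms $\nabla^k\nabla\pi_w$ and $u\cdot\nabla\nabla^kw$ vanish, and the dissipation is again nonnegative. Three contributions remain.
\begin{itemize}[leftmargin=1.5em]
\item[(a)] The commutator $[\nabla^k,u\cdot\nabla]w=\sum_{1\le i\le k}\binom ki\nabla^iu\cdot\nabla\nabla^{k-i}w$.
\item[(b)] The term $\nabla^k(w\cdot\nabla\ubar)=\sum_i\nabla^iw\cdot\nabla^{k-i}\nabla\ubar$.
\item[(c)] The residual $\nabla^kE$, which contributes $B_kL\vartheta$ by \eqref{eq:residual-L2-scale}.
\end{itemize}

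The top-order pieces in (a) and (b), namely $\nabla u\cdot\nabla^kw$ and $\nabla^kw\cdot\nabla\ubar$, are bounded by $L\norm{L^2}{\nabla^kw}$ and absorbed by Gronwall. In each lower-order piece, put $\nabla^{i}w$ with $i<k$ in $L^2$ and the coefficient ($\nabla^{k-i+1}\ubar$, or $\nabla^{k-i+1}u$ after reindexing) in $L^\infty$. Corollary \ref{cor:endpoint-bounds} and the induction hypothesis then bound each such term by
\[
  A\mu^{k-i+1}\vartheta^{-\kappa}\cdot B_i\vartheta^{\gamma_i}
  =L\,B_k\,\vartheta^{\gamma_i-\kappa}.
\]
Since $\gamma_i\ge\gamma_{k-1}$ for $i\le k-1$, and $\gamma_{k-1}-\kappa=\gamma_k+\tfrac1{4(K_{\mathrm b}+1)}-\tfrac1{8(K_{\mathrm b}+1)}>\gamma_k$, every lower-order term is at most $LB_k\vartheta^{\gamma_k}$. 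The residual term is smaller still, because $\vartheta\le\vartheta^{\gamma_k}$.

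Gronwall then gives
\[
  \norm{L^2}{\nabla^kw}\lesssim_\zeta e^{C\zeta^{-N}}t_*LB_k\vartheta^{\gamma_k}\lesssim_\zeta B_k\vartheta^{\gamma_k}.
\]
This closes the induction. For $k$ up to $K_{\mathrm b}$ it uses endpoint bounds up to order $K_{\mathrm b}+1$, which is exactly the range Corollary \ref{cor:endpoint-bounds} provides.

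\emph{Main obstacle.} The delicate point is which factor carries $L^\infty$ in the commutator terms. Writing $\nabla^iu=\nabla^i\ubar+\nabla^iw$ must be avoided: putting $w$ in $L^\infty$ would cost volume factors. That is why Corollary \ref{cor:endpoint-bounds} bounds the exact solution $u$ directly. The price of that corollary is the loss $\vartheta^{-\kappa}$, which the decreasing exponents $\gamma_k$ are designed to absorb, with $\kappa$ exactly half of each step $\gamma_{k-1}-\gamma_k$.

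Two smaller points need a check. First, for $k\ge1$ the top-order term with $i=k$ in (a) and (b) must be organized so that only $\nabla u$ or $\nabla\ubar$ appears in $L^\infty$. Second, the Leibniz coefficients must be counted so that, for example, $\nabla^iu\cdot\nabla\nabla^{k-i}w$ is paired with $B_{k-i+1}$ rather than $B_{k-i}$. The bookkeeping $A\mu^{i}B_{k-i+1}=LB_k$ confirms the pairing.
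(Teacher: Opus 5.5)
Your proposal is correct and follows the paper's proof essentially step for step. It uses the same $L^2$ base case and the same Leibniz splitting, with the top-order pieces absorbed through the Lipschitz bootstrap. The lower-order pieces are controlled, as in the paper, by Corollary~\ref{cor:endpoint-bounds} applied to $u$ itself, the induction hypothesis, and the identity $A\mu^{k+1-m}B_m=LB_k$, and the induction closes via $\gamma_{k-1}-\kappa=\gamma_k+\kappa$ and $Lt_*=\zeta^{-N}$. One cosmetic slip: in your closing remark, the top-order term of the commutator (a) is the one with $i=1$, not $i=k$, as you had correctly stated earlier.
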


\begin{proof}
We proceed by induction on $k$.

\medskip\noindent\textit{Step 1: the $L^2$ estimate.}\quad For $k=0$, take the $L^2$ inner product of \eqref{eq:error-equation} with $w$.  The $u\cdot\nabla w$ term and the pressure term vanish.  Hence
\[
  \frac12\frac{\dd}{\dd t}\norm{L^2}{w}^2
  +\norm{L^2}{\Lambdaop^\alpha w}^2
  \le\norm{L^\infty}{\nabla\ubar}\norm{L^2}{w}^2
  +\norm{L^2}{E}\norm{L^2}{w}.
\]
Using \eqref{eq:approx-lipschitz}, \eqref{eq:residual-L2-scale}, and $w(0)=0$, Gronwall gives
\[
  \norm{L^2}{w(t)}
  \le C_\zeta t_*B_0L\vartheta
  =C_\zeta B_0\vartheta.
\]
Since $\gamma_0=1$, this is \eqref{eq:Hk-stability} at order zero.

\medskip\noindent\textit{Step 2: the higher-order induction.}\quad Assume \eqref{eq:Hk-stability} holds for all orders below an integer $1\le k\le K_{\mathrm b}$.  Let $\beta$ be a multi-index with $\abs\beta=k$.  The high-order transport term expands as
\begin{equation}\label{eq:transport-Leibniz}
  D^\beta(u\cdot\nabla w)
  =u\cdot\nabla D^\beta w
  +\sum_{0<\gamma\le\beta}
  \binom{\beta}{\gamma}D^\gamma u\cdot\nabla D^{\beta-\gamma}w.
\end{equation}
The first term cancels after pairing with $D^\beta w$.  In the sum, the terms with $\abs\gamma=1$ satisfy
\begin{equation}\label{eq:transport-endpoint-term}
  \left|\int D^\gamma u\cdot\nabla D^{\beta-\gamma}w\cdot D^\beta w\dd x\right|
  \lesssim\norm{L^\infty}{\nabla u}\norm{L^2}{\nabla^kw}^2.
\end{equation}
If $\abs\gamma\ge2$, then $\abs{\beta-\gamma}+1\le k-1$.  Putting the derivative of $u$ in $L^\infty$ and the strictly lower derivative of $w$ in $L^2$ gives
\begin{equation}\label{eq:transport-lower-term}
  \left|\int D^\gamma u\cdot\nabla D^{\beta-\gamma}w\cdot D^\beta w\dd x\right|
  \lesssim
  \norm{L^\infty}{\nabla^{\abs\gamma}u}
  \norm{L^2}{\nabla^{k-\abs\gamma+1}w}
  \norm{L^2}{\nabla^kw}.
\end{equation}
Thus no derivative of $w$ above order $k$ is present.

Similarly,
\begin{equation}\label{eq:stretching-Leibniz}
  D^\beta(w\cdot\nabla\ubar)
  =D^\beta w\cdot\nabla\ubar
  +\sum_{\gamma<\beta}
  \binom{\beta}{\gamma}D^\gamma w\cdot\nabla D^{\beta-\gamma}\ubar.
\end{equation}
The first term is bounded by
\begin{equation}\label{eq:stretching-top-term}
  \norm{L^\infty}{\nabla\ubar}\norm{L^2}{\nabla^kw}^2,
\end{equation}
and every term in the sum contains at most $k-1$ derivatives on $w$:
\begin{equation}\label{eq:stretching-lower-term}
  \left|\int D^\gamma w\cdot\nabla D^{\beta-\gamma}\ubar\cdot D^\beta w\dd x\right|
  \lesssim
  \norm{L^2}{\nabla^{\abs\gamma}w}
  \norm{L^\infty}{\nabla^{k-\abs\gamma+1}\ubar}
  \norm{L^2}{\nabla^kw}.
\end{equation}
The pressure again vanishes because $D^\beta w$ is divergence free, while the dissipative term is $\norm{L^2}{\Lambdaop^\alpha D^\beta w}^2\ge0$.

Summing over $\abs\beta=k$, using Corollary \ref{cor:endpoint-bounds}, and dividing by $\norm{L^2}{\nabla^kw}$ through a standard regularization if necessary, we obtain
\begin{equation}\label{eq:Hk-differential}
\begin{aligned}
  \frac{\dd}{\dd t}\norm{L^2}{\nabla^kw}
  &\le C_\zeta L\norm{L^2}{\nabla^kw}\\
  &\quad+C_\zeta\sum_{m=0}^{k-1}
  A\mu^{k+1-m}\vartheta^{-\kappa}
  \norm{L^2}{\nabla^mw}
  +C_\zeta B_kL\vartheta.
\end{aligned}
\end{equation}
By the induction hypothesis and the exact scale identity
\begin{equation}\label{eq:scale-identity-induction}
  A\mu^{k+1-m}B_m=L B_k,
\end{equation}
the lower-order sum in \eqref{eq:Hk-differential} is bounded by
\[
  C_\zeta B_kL\vartheta^{\gamma_{k-1}-\kappa}.
\]
The exponent design gives
\begin{equation}\label{eq:gamma-gain}
  \gamma_{k-1}-\kappa
  =\gamma_k+\kappa>\gamma_k.
\end{equation}
Integrating over $0\le t\le t_*$, using $Lt_*=\zeta^{-N}$ and absorbing the fixed exponential into $C_\zeta$, yields
\[
  \norm{L^2}{\nabla^kw(t)}
  \le C_\zeta B_k
  \left(\vartheta^{\gamma_k+\kappa}+\vartheta\right)
  \le C_\zeta B_k\vartheta^{\gamma_k}.
\]
This closes the induction.
\end{proof}

\begin{corollary}[Real-order stability]\label{cor:real-order-stability}
For every real $0\le\sigma\le K_{\mathrm b}$ and $0\le t\le T_{\mathrm{boot}}$,
\begin{equation}\label{eq:real-order-stability}
  \norm{\dot H^\sigma}{w(t)}
  \lesssim_{\zeta,\sigma}
  A\mu^{\sigma-1}\nu^{-1/2}\vartheta^{\gamma_*}.
\end{equation}
The same estimate with $H^\sigma$ in place of $\dot H^\sigma$ holds after adding the lower-order $L^2$ contribution.
\end{corollary}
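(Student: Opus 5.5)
We deduce Corollary \ref{cor:real-order-stability} from the integer-order bounds of Proposition \ref{prop:Hk-stability} by interpolation.

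First, note that $\gamma_k\ge\gamma_*$ for every $0\le k\le K_{\mathrm b}$ and that $\vartheta\le1$. Hence Proposition \ref{prop:Hk-stability} gives, uniformly in $k$,
\[
  \norm{L^2}{\nabla^kw(t)}\le C_{\zeta,k}A\mu^{k-1}\nu^{-1/2}\vartheta^{\gamma_*},
  \qquad 0\le t\le T_{\mathrm{boot}}.
\]
On $L^2$, Plancherel gives $\norm{\dot H^k}{f}\simeq\norm{L^2}{\nabla^kf}$ for integer $k$. This is exactly \eqref{eq:real-order-stability} at every integer $\sigma\in[0,K_{\mathrm b}]$.

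For non-integer $\sigma$, set $k=\lfloor\sigma\rfloor$ and $\theta=\sigma-k\in(0,1)$, so that $k+1\le K_{\mathrm b}$. We then apply the homogeneous interpolation inequality \eqref{eq:sobolev-interpolation-prelim} with $p=2$. On the Fourier side this is simply H\"older's inequality, $\int|\xi|^{2\sigma}|\hat w|^2\le(\int|\xi|^{2k}|\hat w|^2)^{1-\theta}(\int|\xi|^{2k+2}|\hat w|^2)^{\theta}$. This yields
\[
  \norm{\dot H^\sigma}{w}\le\norm{\dot H^k}{w}^{1-\theta}\norm{\dot H^{k+1}}{w}^{\theta}
  \lesssim_\zeta A\nu^{-1/2}\vartheta^{\gamma_*}\mu^{(k-1)(1-\theta)+k\theta}
  =A\mu^{\sigma-1}\nu^{-1/2}\vartheta^{\gamma_*}.
\]
The key point is that the scale $B_k=A\mu^{k-1}\nu^{-1/2}$ is geometric in $k$, so interpolation reproduces the scale $A\mu^{\sigma-1}\nu^{-1/2}$ exactly. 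Meanwhile the common factor $\vartheta^{\gamma_*}$ passes through unchanged. The constants are uniform over $t\le T_{\mathrm{boot}}$.

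For the inhomogeneous statement, we use $\norm{H^\sigma}{w}\simeq\norm{L^2}{w}+\norm{\dot H^\sigma}{w}$ for $\sigma\ge0$, together with the $k=0$ bound $\norm{L^2}{w}\lesssim_\zeta A\mu^{-1}\nu^{-1/2}\vartheta^{\gamma_*}$. This gives $\norm{H^\sigma}{w}\lesssim_{\zeta,\sigma}A\mu^{-1}\nu^{-1/2}\vartheta^{\gamma_*}(1+\mu^\sigma)$, which is the claimed estimate after adding the lower-order $L^2$ contribution.

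None of these steps is a real obstacle. The only point requiring care is that the exponent on $\vartheta$ must be taken as the worst one, $\gamma_*$, rather than $\gamma_k$. If one interpolated the sharper exponents $\gamma_k$ and $\gamma_{k+1}$ instead, one would obtain an intermediate exponent, which is still at least $\gamma_*$, so the conclusion is unaffected.
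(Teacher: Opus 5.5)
Your proposal is correct and follows essentially the same route as the paper: interpolate between the neighboring integer bounds of Proposition~\ref{prop:Hk-stability}, observe that the geometric scale $B_k=A\mu^{k-1}\nu^{-1/2}$ interpolates exactly to $A\mu^{\sigma-1}\nu^{-1/2}$, and use $\gamma_k\ge\gamma_*$ with $0<\vartheta<1$ to extract the common factor $\vartheta^{\gamma_*}$. Your Fourier-side H\"older computation is the $p=2$ case of \eqref{eq:sobolev-interpolation-prelim}, and your handling of the inhomogeneous norm via the $k=0$ bound matches the paper's closing remark.
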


\begin{proof}
Interpolate between the two neighboring integer estimates in Proposition \ref{prop:Hk-stability}.  The scale $B_k=A\mu^{k-1}\nu^{-1/2}$ interpolates exactly to $A\mu^{\sigma-1}\nu^{-1/2}$.  Since $\gamma_k\ge\gamma_*$ for every $k\le K_{\mathrm b}$ and $0<\vartheta<1$, the interpolated smallness factor is bounded by $\vartheta^{\gamma_*}$.
\end{proof}

\subsection{Target-space stability and closure}

\begin{lemma}[Closure of the Lipschitz bootstrap]\label{lem:bootstrap-closure}
For each fixed sufficiently small $\zeta$, there exists $\mu_1(\zeta)$ such that, whenever $\mu\ge\mu_1(\zeta)$, the exact solution exists smoothly on $[0,t_*]$ and
\begin{equation}\label{eq:closed-lipschitz}
  \norm{L^\infty}{\nabla u(t)}
  <2C_{\mathrm{app}}(\zeta)L,
  \qquad 0\le t\le t_*.
\end{equation}
All estimates above therefore hold on the entire interval $[0,t_*]$.
\end{lemma}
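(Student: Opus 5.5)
We run a standard continuity argument on $T_{\mathrm{boot}}$.  Suppose $T_{\mathrm{boot}}<\min\{t_*,T_{\max}\}$ or $T_{\mathrm{boot}}=T_{\max}\le t_*$.  The plan is to show that on $[0,T_{\mathrm{boot}})$ the strict inequality
\[
  \norm{L^\infty}{\nabla u(t)}\le\tfrac32C_{\mathrm{app}}(\zeta)L
\]
holds once $\mu\ge\mu_1(\zeta)$.  Continuity of $t\mapsto\norm{L^\infty}{\nabla u(t)}$ for the smooth solution then contradicts the definition of $T_{\mathrm{boot}}$ in the first case.  In the second case, the bound $\int_0^{T_{\max}}\norm{L^\infty}{\nabla u}\dd t\le 2C_{\mathrm{app}}Lt_*=2C_{\mathrm{app}}\zeta^{-N}<\infty$ and the continuation criterion \eqref{eq:continuation-criterion} of Proposition \ref{prop:local-theory} show that $T_{\max}$ is not maximal.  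Hence $T_{\mathrm{boot}}=t_*<T_{\max}$, and \eqref{eq:closed-lipschitz} holds on all of $[0,t_*]$, including the endpoint by continuity.

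The key step is the bound $\norm{L^\infty}{\nabla w(t)}=o(L)$ on $[0,T_{\mathrm{boot}})$.  Combined with \eqref{eq:approx-lipschitz} it gives $\norm{L^\infty}{\nabla u}\le C_{\mathrm{app}}L+o(L)$.  I would obtain it from Corollary \ref{cor:real-order-stability} and the $L^2$-based embedding $\norm{L^\infty}{\nabla w}\lesssim\norm{H^{5/2+\epsilon}}{w}$, with a fixed small $\epsilon>0$, say $\epsilon=1/4$.  This is admissible because $K_{\mathrm b}\ge6>5/2+\epsilon$.  The corollary gives
\[
  \norm{L^\infty}{\nabla w}\lesssim_\zeta A\mu^{3/2+\epsilon}\nu^{-1/2}\vartheta^{\gamma_*}
  =L\,\mu^{1/2+\epsilon}\nu^{-1/2}\vartheta^{\gamma_*}
  =L\,\mu^{\epsilon}\vartheta^{\gamma_*-1/2},
\]
where we used $\mu^{1/2}\nu^{-1/2}=\vartheta^{-1/2}$.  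Since $\gamma_*>3/4$, the right-hand side is at most $L\mu^{\epsilon-b/4}$.  So $\epsilon$ must be chosen smaller than $b/4$, not as large as $1/4$.  With $\epsilon<b/4$ the factor $\mu^{\epsilon-b/4}$ tends to $0$ as $\mu\to\infty$.  The lower-order $L^2$ piece of the inhomogeneous norm is of the same order or smaller, being controlled by $B_0\vartheta$.

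The main obstacle is exactly this volume-factor loss.  Passing from $L^2$ to $L^\infty$ costs $V_2^{-1}=\mu\nu^{1/2}$ in effect, and this is anisotropic: the embedding sees $\mu^{3/2}$, while the torus volume is only $\mu^{-2}\nu^{-1}$.  The cost appears as $\vartheta^{-1/2}$ and must be beaten by the gain $\vartheta^{\gamma_*}$.  This is why the exponent design forces $\gamma_*>3/4>1/2$, and why the regularity overshoot $\epsilon$ must be taken below $b/4$, a fraction of $b$.

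If one prefers, a cleaner alternative is to interpolate $\nabla w$ in $L^\infty$ between the $L^2$ bounds of Proposition \ref{prop:Hk-stability} at orders $1$ and $K_{\mathrm b}$ (Gagliardo--Nirenberg).  This yields the same scale, $L$ times a positive power of $\vartheta$ minus a small power of $\mu$.  Any leftover $\vartheta^{-\kappa}$-type loss is absorbed, since $\kappa$ is small compared with $\gamma_*-1/2$.

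The hypotheses of the corollary are met throughout: everything is stated on $[0,T_{\mathrm{boot}}]$ under \eqref{eq:lipschitz-bootstrap}, and all constants depend only on $\zeta$.  Taking $\mu_1(\zeta)$ so large that the $o(L)$ term is below $\tfrac12C_{\mathrm{app}}L$ closes the argument.
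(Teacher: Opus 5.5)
Your proposal is correct and follows essentially the same route as the paper: the embedding $\|\nabla w\|_{L^\infty}\lesssim\|w\|_{H^{5/2+\eta}}$, Corollary~\ref{cor:real-order-stability} to obtain the bound $L\mu^{\eta}\vartheta^{\gamma_*-1/2}$, the inequality $\gamma_*-1/2>1/4$ to beat the small $\mu^{\eta}$ loss, and the continuation criterion to rule out termination before $t_*$. The only cosmetic difference is that the paper reuses its already-fixed $\eta<b\kappa/4$, whereas you introduce a fresh $\epsilon<b/4$; as you noticed yourself, your first suggestion $\epsilon=1/4$ would not work.
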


\begin{proof}
Because $K_{\mathrm b}>5/2+\eta$, Corollary \ref{cor:real-order-stability} and Sobolev embedding imply
\begin{align}
  \norm{L^\infty}{\nabla w(t)}
  &\lesssim\norm{H^{5/2+\eta}}{w(t)}\notag\\
  &\lesssimz A\mu^{3/2+\eta}\nu^{-1/2}\vartheta^{\gamma_*}\notag\\
  &=L\mu^\eta\vartheta^{\gamma_*-1/2}.
  \label{eq:Lipschitz-error-small}
\end{align}
Now
\[
  \mu^\eta\vartheta^{\gamma_*-1/2}
  =\mu^{\eta-b(\gamma_*-1/2)}\longrightarrow0,
\]
because $\gamma_*-1/2>1/4$ and \eqref{eq:ell-eta-choice} gives $\eta<b\kappa/4<b/4$.  Hence, after increasing $\mu_1(\zeta)$,
\[
  \norm{L^\infty}{\nabla w(t)}
  \le\frac12C_{\mathrm{app}}(\zeta)L
  \qquad\text{on }[0,T_{\mathrm{boot}}].
\]
Together with \eqref{eq:approx-lipschitz}, this strictly improves the bootstrap bound.  The continuation criterion in Proposition \ref{prop:local-theory} prevents termination before $t_*$, so $T_{\mathrm{boot}}=t_*$ and \eqref{eq:closed-lipschitz} follows.
\end{proof}

\begin{lemma}[Target-space perturbation]\label{lem:target-perturbation}
At the inflation time $t_*$,
\begin{equation}
  \norm{W^{s,p}}{w(t_*)}
  \lesssimz S_{s,p}\vartheta^{\beta_p},
\end{equation}
and, for every $1\le q\le\infty$,
\begin{equation}
  \norm{B^s_{p,q}}{w(t_*)}
  \lesssim_{\zeta,s,p,q,\eta}
  S_{s,p}\mu^\eta\vartheta^{\beta_p}.
\end{equation}
Both right-hand sides tend to zero relative to $S_{s,p}=\zeta^2$ as $\mu\to\infty$ with $\zeta$ fixed.
\end{lemma}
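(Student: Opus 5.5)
The plan is to transfer the $L^2$-based real-order stability of Corollary~\ref{cor:real-order-stability} to the $L^p$-based target spaces by Sobolev embedding, and then to check that the $\nu$-powers produce exactly the factor $\vartheta^{\beta_p}$. Lemma~\ref{lem:bootstrap-closure} gives $T_{\mathrm{boot}}=t_*$, so Corollary~\ref{cor:real-order-stability} applies at $t=t_*$ for every real $0\le\sigma\le K_{\mathrm b}$.

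\emph{Sobolev case.} Since $J^s\Lambda^{-s}$ is not bounded at low frequencies, I would control the inhomogeneous norm as
\[
\norm{W^{s,p}}{w}\lesssim\norm{L^p}{w}+\norm{\dot W^{s,p}}{w}.
\]
By \eqref{eq:sobolev-transfer-prelim} one has $\norm{\dot W^{s,p}}{w}\lesssim\norm{\dot H^{s+d_p}}{w}$. Likewise $\norm{L^p}{w}\lesssim\norm{\dot H^{d_p}}{w}$, which is the case $\sigma=0$ of the same embedding, valid since $2\le p<\infty$. Both orders $d_p$ and $s+d_p$ are at most $K_{\mathrm b}$ by \eqref{eq:Kb-choice}. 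Corollary~\ref{cor:real-order-stability} then gives
\[
\norm{\dot H^{s+d_p}}{w(t_*)}\lesssim_\zeta A\mu^{s+d_p-1}\nu^{-1/2}\vartheta^{\gamma_*}.
\]
The key computation uses $d_p-1=\tfrac12-\tfrac3p$:
\[
A\mu^{s+d_p-1}\nu^{-1/2}=A\mu^{s}\mu^{-2/p}\nu^{-1/p}\cdot\mu^{1/2-1/p}\nu^{-1/2+1/p}=S_{s,p}\,\vartheta^{-(1/2-1/p)}.
\]
Multiplying by $\vartheta^{\gamma_*}$ yields $S_{s,p}\vartheta^{\beta_p}$, by \eqref{eq:beta-p}. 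The $L^p$ piece gives $S_{0,p}\vartheta^{\beta_p}=\mu^{-s}S_{s,p}\vartheta^{\beta_p}$, which is smaller.

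\emph{Besov case.} Here \eqref{eq:besov-transfer-prelim} gives $\norm{B^s_{p,q}}{w}\lesssim\norm{H^{s+d_p+\eta}}{w}$. I would split the $H^{s+d_p+\eta}$ norm into its $\dot H^{s+d_p+\eta}$ part and its $L^2$ part. The homogeneous part is bounded exactly as above, with the extra factor $\mu^\eta$. The $L^2$ part is the main obstacle, because $L^2$ has a worse volume factor than $L^p$. Concretely, $\norm{L^2}{w}\lesssim B_0\vartheta=A\mu^{-1}\nu^{-1/2}\vartheta$, and its ratio to $S_{s,p}\vartheta^{\beta_p}$ is
\[
\mu^{-1-s+2/p}\nu^{-1/2+1/p}\vartheta^{1-\beta_p}\le\mu^{-s-1/2}\nu^{-1/2}\le 1,
\]
using $\tfrac2p-1\le0$ and $\nu\ge1$. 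So this term is also dominated.

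If the crude $H^{s+d_p+\eta}$ bound caused trouble, the fallback would be to treat the low frequencies separately: use $\norm{L^p}{w}$ together with the homogeneous Besov comparison, bounding the low part by $\norm{L^p}{w}\lesssim\norm{\dot H^{d_p}}{w}$ as in the Sobolev case.

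\emph{Decay relative to $S_{s,p}$.} It remains to see that both error factors vanish as $\mu\to\infty$. One has $\vartheta^{\beta_p}=\mu^{-b\beta_p}\to0$ because $\beta_p>\tfrac34-\tfrac12>0$. In the Besov case,
\[
\mu^\eta\vartheta^{\beta_p}=\mu^{\eta-b\beta_p}\to0,
\]
since \eqref{eq:ell-eta-choice} gives $\eta<b\kappa/4<b/4<b\beta_p$. This proves the final assertion, with $S_{s,p}=\zeta^2$ by \eqref{eq:target-scale}.
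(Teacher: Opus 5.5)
Your proposal is correct and follows the paper's proof step for step. It uses the embeddings $\dot H^{s+d_p}\hookrightarrow\dot W^{s,p}$, $H^{d_p}\hookrightarrow L^p$ and $H^{s+d_p+\eta}\hookrightarrow B^s_{p,q}$ with Corollary~\ref{cor:real-order-stability} at $t_*$, the scale conversion $A\mu^{s+d_p-1}\nu^{-1/2}=S_{s,p}\vartheta^{-(1/2-1/p)}$, and $\eta<b/4<b\beta_p$ for the decay.

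There is one harmless slip in your explicit treatment of the $L^2$ part of the $H^{s+d_p+\eta}$ norm, which the paper absorbs silently. Your intermediate bound $\mu^{-s-1/2}\nu^{-1/2}$ is false for $2\le p<4$: at $p=2$ the ratio equals $\mu^{-s}\vartheta^{1-\gamma_*}$. Also, $V_2\le V_p$, so $L^2$ actually has the better volume factor, not the worse one. The conclusion that the ratio is at most $1$ still holds, because each factor of $\mu^{-1-s+2/p}\,\nu^{-1/2+1/p}\,\vartheta^{1-\beta_p}$ is at most $1$.
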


\begin{proof}
By \eqref{eq:sobolev-transfer-prelim} and Corollary \ref{cor:real-order-stability},
\begin{align*}
  \norm{\dot W^{s,p}}{w(t_*)}
  &\lesssim\norm{\dot H^{s+d_p}}{w(t_*)}\\
  &\lesssimz A\mu^{s+d_p-1}\nu^{-1/2}\vartheta^{\gamma_*}.
\end{align*}
The scales satisfy
\begin{equation}\label{eq:scale-conversion-W}
\begin{aligned}
  A\mu^{s+d_p-1}\nu^{-1/2}
  &=S_{s,p}
  \left(\frac\mu\nu\right)^{1/2-1/p}\\
  &=S_{s,p}\vartheta^{-(1/2-1/p)}.
\end{aligned}
\end{equation}
This proves the homogeneous part of \eqref{eq:W-target-error}.  For the lower-order term, \eqref{eq:sobolev-transfer-prelim} with $s=0$ and Corollary \ref{cor:real-order-stability} give
\begin{align}
  \norm{L^p}{w(t_*)}
  &\lesssim \norm{H^{d_p}}{w(t_*)}\notag\\
  &\lesssimz A\mu^{d_p-1}\nu^{-1/2}\vartheta^{\gamma_*}\notag\\
  &=S_{s,p}\mu^{-s}
  \vartheta^{-(1/2-1/p)}\vartheta^{\gamma_*}\notag\\
  &=S_{s,p}\mu^{-s}\vartheta^{\beta_p}
  \le S_{s,p}\vartheta^{\beta_p},
  \label{eq:lower-order-Lp-transfer}
\end{align}
where the last inequality uses $s>0$ and $\mu\ge1$.  Thus the inhomogeneous $W^{s,p}$ norm satisfies \eqref{eq:W-target-error}.

For the Besov estimate, \eqref{eq:besov-transfer-prelim} gives
\begin{align*}
  \norm{B^s_{p,q}}{w(t_*)}
  &\lesssim\norm{H^{s+d_p+\eta}}{w(t_*)}\\
  &\lesssimz A\mu^{s+d_p+\eta-1}\nu^{-1/2}\vartheta^{\gamma_*}\\
  &=S_{s,p}\mu^\eta\vartheta^{\beta_p}.
\end{align*}
All required real orders are below $K_{\mathrm b}$ by \eqref{eq:Kb-choice}.  Finally,
\[
  \mu^\eta\vartheta^{\beta_p}
  =\mu^{\eta-b\beta_p}\longrightarrow0,
\]
because $\beta_p>1/4$ and $\eta<b\kappa/4<b/4$.
\end{proof}

\begin{proof}[Proof of Proposition \ref{prop:stability}]
Lemma \ref{lem:bootstrap-closure} gives smooth existence on $[0,t_*]$.  Lemma \ref{lem:target-perturbation} then gives \eqref{eq:W-target-error} and \eqref{eq:B-target-error}.  The decay relative to $S_{s,p}$ follows from $\vartheta=\mu^{-b}$ and $\eta<b\beta_p$.
\end{proof}

\section{Proof of the main theorem}\label{sec:main-proof}

Fix
\begin{equation}\label{eq:fixed-target-space}
  X=W^{s,p}
  \qquad\text{or}\qquad
  X=B^s_{p,q}\quad\text{with $q$ fixed}.
\end{equation}
By Proposition \ref{prop:approx-growth}, there are constants $c_{\mathrm{grow}}(X)>0$ and $C_{\mathrm{osc}}(X)\ge1$ such that
\begin{equation}\label{eq:approx-X-growth}
  \norm{X}{\ubar(t_*)}
  \ge c_{\mathrm{grow}}(X)\zeta^{2-Ns}
\end{equation}
whenever $\zeta^{-N}\ge C_{\mathrm{osc}}(X)$.  In the Besov case these constants may depend on the fixed $q$.

Let $C_{\mathrm{in}}(X)$ be the constant in Proposition \ref{prop:approximation-package}, and set
\[
  c_{\mathrm{out}}(X):=\frac12c_{\mathrm{grow}}(X).
\]
Choose $0<\zeta_0(X)<1$ so small that
\begin{equation}\label{eq:zeta0-conditions}
  C_{\mathrm{in}}(X)\zeta_0(X)\le1,
  \qquad
  c_{\mathrm{out}}(X)\zeta_0(X)^{3-Ns}\ge1,
  \qquad
  \zeta_0(X)^{-N}\ge C_{\mathrm{osc}}(X).
\end{equation}
This is possible because $Ns>10$ by \eqref{eq:N-choice}.

\begin{proof}[Proof of Theorem \ref{thm:main}]
Given $\veps>0$, set
\begin{equation}\label{eq:internal-zeta}
  \zeta:=\min\{\veps,\zeta_0(X)\}.
\end{equation}
After fixing $\zeta$, take $\mu$ sufficiently large so that Proposition \ref{prop:approximation-package} and Proposition \ref{prop:stability} hold, $t_*\le\zeta$, and
\begin{equation}\label{eq:final-error-absorption}
  \norm{X}{u(t_*)-\ubar(t_*)}
  \le c_{\mathrm{out}}(X)\zeta^{2-Ns}.
\end{equation}
The last condition is possible because the error in Proposition \ref{prop:stability} is $o_{\mu\to\infty}(\zeta^2)$, while $0<\zeta\le1$ is fixed.

The initial estimate and the first condition in \eqref{eq:zeta0-conditions} give
\[
  \norm{X}{u_0}
  \le C_{\mathrm{in}}(X)\zeta^2
  \le\zeta
  \le\veps.
\]
The exact solution is smooth on $[0,t_*]$ and $0<t_*\le\zeta\le\veps$.  By \eqref{eq:approx-X-growth}, \eqref{eq:final-error-absorption}, and the triangle inequality,
\[
  \norm{X}{u(t_*)}
  \ge c_{\mathrm{out}}(X)\zeta^{2-Ns}
  \ge\zeta^{-1}
  \ge\veps^{-1},
\]
where the second inequality follows from \eqref{eq:zeta0-conditions}.  This proves the theorem for either choice of $X$.
\end{proof}

\end{document}